\newtheorem{thm}{Theorem}[section]
\newtheorem{cor}[thm]{Corollary}
\newtheorem{pro}[thm]{Proposition}
\newtheorem{lem}[thm]{Lemma}
\theoremstyle{definition}
\def\Aut{{\rm Aut}}
\def\C{{\mathbf C}}
\def\CDS{{\mathcal{CDS}}}
\def\SCDS{{\mathcal {SCDS}}}
\def\dalpha{d_\alpha}
\def\Diff{{\rm Diff}}
\def\H{{\mathbf H}}
\def\Ham{{\rm Ham}}
\def\Hbar{{\overline H}}
\def\id{{\rm id}}
\def\Id{{\rm Id}}
\def\i{\sqrt{-1}\ }
\def\id{{\rm id}}
\def\L{{\mathcal L}}
\def\ma{(M,\alpha)}
\def\PHomeo{{\rm PHomeo}}
\def\R{{\mathbf R}}
\def\re{{\rm ref}}
\def\linspan{{\rm Span}}
\def\nullham{{\mathcal N}}
\def\nullautham{{\mathcal N}_{\rm aut}}
\def\supp{{\rm supp}}
\def\TCDS{{\mathcal {TCDS}}}
\title[Topological contact dynamics III]{Topological contact dynamics III:\\ uniqueness of the topological {H}amiltonian and $C^0$-rigidity of the geodesic flow}
\author[S.~M\"uller \& P.~Spaeth]{Stefan M\"uller \and Peter Spaeth}
\address{Penn State University, Altoona, USA \newline \indent Korea Institute for Advanced Study, Seoul, Korea}
\email{mueller@psu.edu}
\address{Penn State University, Altoona, USA \newline \indent Korea Institute for Advanced Study, Seoul, Korea}
\email{spaeth@psu.edu}
\subjclass[2010]{53D10, 53D35, 37B99, 54H20, 53D25, 57R17, 37J55, 70G45, 28A99, 53C22, 53C24}
\keywords{Topological contact dynamical system, topological contact automorphism, contact rigidity, topological contact isotopy, uniqueness of topological contact Hamiltonian, convolutions in Heisenberg group, $C^0$-rigidity of geodesic flow, rigidity of contact Hamiltonian and conformal factor, topological conformal factor, transformation law for conjugation by topological contact automorphism, dense Reeb orbit}
\begin{document}
\thispagestyle{plain}

\begin{abstract}
We prove that a topological contact isotopy uniquely defines a topological contact Hamiltonian.
Combined with previous results from \cite{ms:tcd1}, this generalizes the classical one-to-one correspondence between smooth contact isotopies and their generating smooth contact Hamiltonians and conformal factors to the group of topological contact dynamical systems.
Applications of this generalized correspondence include $C^0$-rigidity of smooth contact Hamiltonians, a transformation law for topological contact dynamical systems, and $C^0$-rigidity of the geodesic flows of Riemannian manifolds.
\end{abstract}

\maketitle

\section{Introduction} \label{sec:intro}
An important characteristic of a Hamiltonian or contact vector field is that the time evolution of the corresponding dynamical system is determined by a single function on the underlying manifold, and conversely this function is unique up to a modest normalization condition.
One goal of this sequence of papers is to extend smooth contact dynamics to topological dynamics, and to generalize the previously stated correspondence to topological  dynamics, so that invariants of topological contact isotopies can be assigned via their uniquely corresponding topological contact Hamiltonians.
As an application we establish $C^0$-contact rigidity (Corollary~\ref{cor:rigidity}).
However, applications of the one-to-one correspondence extend beyond contact and Hamiltonian dynamics.
We recall the extension of the helicity invariant to certain isotopies of measure preserving homeomorphisms of orientable three-manifolds \cite{ms:helicity}.
We also prove a generalized transformation law for topological contact dynamical systems, which provides a new criterion for the topological conjugacy of smooth contact dynamical systems.
See Theorem~\ref{thm:topological-transformation-law} and section~\ref{sec:corollaries} for additional related results.
Finally using the contact-geometric interpretation of the geodesic flow, we prove a novel $C^0$-rigidity phenomenon of the geodesic flow of Riemannian manifolds (Theorem~\ref{thm:geo}).

Let $M$ be a closed oriented smooth manifold of dimension $2n+1$ with contact structure $\xi$, and a contact form $\alpha$ such that $\ker \alpha = \xi$ and $\alpha \wedge (d\alpha)^n$ is a positive volume form.
A topological contact dynamical system $(\Phi, H, h)$ of $(M,\alpha)$~\cite{ms:tcd1, ms:tcd2} arises from a sequence $\Phi_i = \{ \phi_i^t \}$ of smooth contact isotopies of $(M,\xi)$ such that:

\begin{itemize}\itemsep 3pt
\item $\Phi_i$ uniformly converges to an isotopy $\Phi = \{ \phi_t \}$ of homeomorphisms of $M$,
\item the sequence $H_i :[0,1] \times M \to \R$ of smooth time-dependent contact Hamiltonian functions generating the contact isotopies $\Phi_i$ converges with respect to the norm
\begin{gather}\label{eqn:contact-norm}
\qquad \|H_i \| = \int_0^1\!\! \left( \max_{x\in M} H_i(t,x) - \min_{x\in M}H_i(t,x) + \frac{1}{\int_M \nu}\left| \int_M H_i(t,x)\, \nu \right| \right) dt
\end{gather}
to a time-dependent function $H : [0,1] \times M \to \R$, where $\nu$ denotes the canonical measure induced by the volume form $ \alpha \wedge (d\alpha)^n$, and
\item the sequence $h_i$ of smooth time-dependent conformal rescalings of the contact form $\alpha$, i.e.\ the sequence of smooth functions $h_i : [0,1] \times M \to \R$ satisfying $(\phi_i^t)^* \alpha = e^{h^t_i}\alpha$ converges with respect to the uniform norm
\begin{gather}\label{eqn:uniform-norm}
\qquad |h_i| = \max \{ |h_i(t,x)| \mid (t,x) \in [0,1] \times M \}
\end{gather}
to a continuous function $h : [0,1] \times M \to \R$.
\end{itemize}
In our terminology $\Phi$, $H$, and $h$ above such that all three conditions simultaneously hold are called a topological contact isotopy, a topological contact Hamiltonian, and a topological conformal factor, respectively.
See~\cite{ms:tcd2} for examples of non-smooth topological contact dynamical systems of any $\ma$.

Theorem~\ref{thm:main-theorem} is the first result of this paper.
After recalling the necessary preliminaries on contact dynamics in section~\ref{sec:contact-dynamics}, the proof is given in section~\ref{sec:proof-main-theorem}.

\begin{thm}\label{thm:main-theorem}
Let $(\Phi, H, h)$ be a topological contact dynamical system of $(M,\alpha)$.
If $\Phi$ is the constant isotopy at the identity, i.e.\ $\phi_t = \id$ for all $t \in [0,1]$, then $H_t = 0$ for almost every $t \in [0,1]$.
\end{thm}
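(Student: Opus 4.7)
The plan is to first show that the topological conformal factor $h$ also vanishes, then to recover $H$ from integrals of $\alpha$ along the trajectories of the approximating isotopies. For the first step I would use the volume transformation $(\phi_i^t)^*(\alpha\wedge(d\alpha)^n)=e^{(n+1)h_i^t}\,\alpha\wedge(d\alpha)^n$. Applied to a relatively compact open $U\subset M$ with piecewise smooth boundary, this gives $\nu(\phi_i^t(U))=\int_U e^{(n+1)h_i^t}\,d\nu$, where $\nu=\alpha\wedge(d\alpha)^n$. Since $\phi_i^t\to\id$ uniformly on the compact manifold $M$, both $\phi_i^t$ and its inverse move points by less than $\epsilon_i\to 0$, so the symmetric difference $\phi_i^t(U)\triangle U$ lies in an $\epsilon_i$-tubular neighborhood of $\partial U$ and the left-hand side converges to $\nu(U)$. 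Combined with the uniform convergence $h_i^t\to h^t$, varying $U$ and $t$ forces $e^{(n+1)h}\equiv 1$ on $M$, hence $h\equiv 0$ by continuity.

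Next, for each $x\in M$ and $T\in[0,1]$, the identity $\alpha(X_{H_i^t})=H_i^t$ integrates along the orbit $\gamma_i^x(t)=\phi_i^t(x)$ to
\begin{equation*}
\int_{\gamma_i^x}\alpha \;=\; \int_0^T H_i^t(\phi_i^t(x))\,dt.
\end{equation*}
The elementary bound $\sup_x|f|\le(\max f-\min f)+|\mathrm{mean}(f)|$ shows that contact-norm convergence dominates $\int_0^1\sup_x|H_i^t-H^t|\,dt\to 0$; along a subsequence $H_i^t\to H^t$ uniformly in $x$ for a.e.\ $t$, so $H^t$ is continuous in $x$ for a.e.\ $t$. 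A dominated convergence argument (with integrable dominator $\sup_x|H^t|$ in $t$) then yields
\begin{equation*}
\int_0^T H_i^t(\phi_i^t(x))\,dt \;\longrightarrow\; \int_0^T H^t(x)\,dt \qquad \text{for every }x\in M.
\end{equation*}
If I can show $\int_{\gamma_i^x}\alpha\to 0$ for every $x$, then $\int_0^T H^t(x)\,dt=0$ for all $T\in[0,1]$ and all $x\in M$; differentiating in $T$ gives $H\equiv 0$ almost everywhere, as required.

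The hard part is the vanishing of the path integral. In a Darboux chart centered at $x$ with $\alpha=dz-y\,dx$ and $\phi_i^t(x)=(x_i,y_i,z_i)(t)$,
\begin{equation*}
\int_{\gamma_i^x}\alpha \;=\; \bigl(z_i(T)-z_i(0)\bigr) - \int_0^T y_i(t)\,\dot x_i(t)\,dt;
\end{equation*}
the boundary term obviously vanishes in the limit, but the symplectic-area integral is \emph{not} controlled by $C^0$-closeness of $\phi_i^t$ to the identity because the trajectory may oscillate wildly in $C^1$. To handle it I would exploit the auxiliary information $h_i^t(x)=\int_0^t R_\alpha(H_i^s)(\phi_i^s(x))\,ds \to 0$ uniformly (obtained by differentiating $(\phi_i^t)^*\alpha=e^{h_i^t}\alpha$ and using $\mathcal L_{X_{H_i^t}}\alpha=R_\alpha(H_i^t)\alpha$), together with a regularization of $H_i$ by convolution with an approximate identity in the local Heisenberg-group structure of the Darboux chart --- the technique foreshadowed by the paper's keywords. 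The smoothed Hamiltonians $H_i^\epsilon$ generate isotopies admitting uniform $C^1$ bounds on compacts, for which the path integrals of $\alpha$ genuinely vanish in the limit; a diagonal argument sending $i\to\infty$ and then $\epsilon\to 0$ transfers the conclusion to $\int_{\gamma_i^x}\alpha$. Carrying out this regularization while preserving the $L^1_t$-$L^\infty_x$ structure of the contact norm, so that neither the convergence $H_i\to H$ nor the smallness of $h_i$ is destroyed, is the central technical obstacle.
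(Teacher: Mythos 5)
Your reduction to the vanishing of the path integrals $\int_{\gamma_i^x}\alpha$ is where the proof stops, and the sketch you offer for that step does not close the gap; in fact it is circular. By your own identity $\int_{\gamma_i^x}\alpha=\int_0^T H_i(t,\phi_i^t(x))\,dt$, and since (as you correctly argue) the right-hand side converges to $\int_0^T H(t,x)\,dt$, the assertion that the path integrals tend to zero is literally equivalent to the conclusion $\int_0^T H_t(x)\,dt=0$ that you are trying to prove. Nothing in the $C^0$-data excludes the wild oscillation you yourself identify, so some genuinely rigid input is required, and the regularization plan does not supply it: mollifying $H_i$ at scale $\epsilon$ in the Heisenberg chart gives flows whose $C^0$-distance to $\Phi_i$ is controlled only through derivative bounds on $H_i$, which are not uniform in $i$; there is no reason the mollified flows converge uniformly to the identity, so their path integrals need not vanish in the limit, and the proposed diagonal argument has nothing to transfer back to $\gamma_i^x$. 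Likewise the ``uniform $C^1$ bounds'' for the flows of the smoothed Hamiltonians hold only for fixed $\epsilon$ and degenerate as $\epsilon\to 0$. (Your first step, that $h\equiv 0$, is correct but is exactly Theorem~\ref{thm:uniqueness-conformal-factor}, quoted from the earlier paper, and is not the crux.)

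For comparison, the paper avoids controlling trajectories altogether. It studies the set of null contact Hamiltonians (those $H$ with $(\Id,H,0)$ a topological contact dynamical system), proves its algebraic closure properties (Lemma~\ref{lem:null-hamiltonians}), reduces to time-independent null Hamiltonians by a Lebesgue differentiation argument (Lemma~\ref{lem:non-autonomous-step}), and then uses the Heisenberg translations and convolutions in a purely algebraic way: the convolution of a putative non-constant null Hamiltonian is approximated in $C^0$ by Riemann sums of right-translates, each implemented by a compactly supported contact flow, so closedness of the span of null Hamiltonians forces the smooth mollified function to be a nonzero \emph{smooth} null Hamiltonian. That contradicts Lemma~\ref{lem:smooth-step}, which rests on the uniqueness of the topological isotopy generated by a topological Hamiltonian (Theorem~\ref{thm:uniqueness-isotopy}, ultimately the contact energy--capacity inequality). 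This energy--capacity-type rigidity is precisely the ingredient your argument never invokes, and without such an ingredient the ``central technical obstacle'' you flag is not a technicality but the whole theorem.
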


An obvious feature of contact dynamics that distinguishes itself from Hamiltonian dynamics is the presence of non-trivial conformal rescaling of the contact form.
We addressed the uniqueness of the topological conformal factor in~\cite{ms:tcd1}.

\begin{thm}\label{thm:uniqueness-conformal-factor}
Let $(\Phi, H, h)$ be a topological contact dynamical system of $(M,\alpha)$.
If $\Phi$ is the constant isotopy at the identity, i.e.\ $\phi_t = \id$ for all $t \in [0,1]$, then $h_t = 0$ for all $t \in [0,1]$.
\qed
\end{thm}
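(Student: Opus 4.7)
My plan is to exploit the transformation law of the canonical volume form $\nu = \alpha \wedge (d\alpha)^n$ under the smooth contact isotopies $\phi_i^t$. A direct computation gives
\[
(\phi_i^t)^*\nu \;=\; e^{(n+1)h_i^t}\,\nu.
\]
Indeed, from $(\phi_i^t)^*\alpha = e^{h_i^t}\alpha$ one obtains $(\phi_i^t)^*(d\alpha) = e^{h_i^t}(dh_i^t \wedge \alpha + d\alpha)$, and expanding the $n$-th power together with $\alpha \wedge \alpha = 0$ kills every mixed term. Wedging with $(\phi_i^t)^*\alpha$ then eliminates the remaining term containing $\alpha \wedge dh_i^t \wedge \alpha$, leaving the identity. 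Integrating over a fixed relatively compact open set $U \subset M$ with smooth (hence $\nu$-null) boundary, I obtain
\[
\nu\bigl(\phi_i^t(U)\bigr) \;=\; \int_U e^{(n+1)h_i^t}\, d\nu
\]
for every $i$ and every $t \in [0,1]$.

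The next step is to pass to the limit on both sides. On the right, the uniform convergence $h_i \to h$ on $[0,1] \times M$ immediately yields $\int_U e^{(n+1)h_i^t}\, d\nu \to \int_U e^{(n+1)h^t}\, d\nu$. On the left, let $\epsilon_i = \sup_{(t,x)} d(\phi_i^t(x), x)$, which tends to zero by hypothesis; bijectivity of $\phi_i^t$ immediately gives $\sup_{(t,x)} d((\phi_i^t)^{-1}(x), x) \le \epsilon_i$ as well. Therefore
\[
\{x \in U : d(x,\partial U) > \epsilon_i\} \;\subset\; \phi_i^t(U) \;\subset\; \{x \in M : d(x,U) < \epsilon_i\},
\]
and since $\nu(\partial U) = 0$, this sandwiching forces $\nu(\phi_i^t(U)) \to \nu(U)$.

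Combining the two limits yields $\int_U (e^{(n+1)h^t} - 1)\, d\nu = 0$ for every open set $U$ of this form, hence for every Borel subset of $M$ by standard approximation. Since $h$ is continuous in $(t,x)$, this forces $h^t \equiv 0$ for all $t \in [0,1]$.

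The only step I expect to require real care is the left-hand convergence $\nu(\phi_i^t(U)) \to \nu(U)$: translating uniform convergence of the homeomorphisms into convergence of the measures of their images is where the hypothesis is actually consumed, and it relies essentially on the choice of $U$ with $\nu$-null boundary. The remainder reduces to a routine Jacobian computation together with elementary measure theory. Notably, this argument never invokes the contact-norm convergence of the $H_i$; uniform convergence of the $\phi_i^t$ together with uniform convergence of the $h_i^t$ is already enough.
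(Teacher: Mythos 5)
Your argument is correct, and it is worth noting that this paper never actually proves Theorem~\ref{thm:uniqueness-conformal-factor}: it is imported from \cite{ms:tcd1}, and the closest statement reproduced here is the (strictly stronger, local, two-sequence) Proposition~\ref{pro:local-unique-topo-conformal-factor}, also quoted without proof. Your route --- the Jacobian identity $(\phi_i^t)^*\bigl(\alpha\wedge(d\alpha)^n\bigr)=e^{(n+1)h_i^t}\,\alpha\wedge(d\alpha)^n$, change of variables, and a squeeze on $\nu(\phi_i^t(U))$ using uniform closeness of $\phi_i^t$ and $(\phi_i^t)^{-1}$ to the identity --- is a legitimate self-contained proof, and it is in the same measure-theoretic spirit as the volume-form argument behind the conformal-factor uniqueness results of \cite{ms:tcd1}. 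Your closing observation is also consistent with the paper: since you never use the contact-norm convergence of the $H_i$, you in fact prove the stronger statement that uniform convergence $\phi_i^t\to\id$ together with uniform convergence $h_i\to h$ forces $h=0$, which is exactly the $\psi_i=\id$, $g_i=0$ case of Proposition~\ref{pro:local-unique-topo-conformal-factor}. Two small points deserve a line each in a written version: the inner inclusion $\{x\in U: d(x,\partial U)>\epsilon_i\}\subset\phi_i^t(U)$ uses that the Riemannian distance is a length metric (a path from $x$ to $(\phi_i^t)^{-1}(x)$ of length at most $\epsilon_i$ must meet $\partial U$ if the endpoint lies outside $U$), and the detour through ``every Borel subset'' is unnecessary --- once $\int_U\bigl(e^{(n+1)h_t}-1\bigr)\,d\nu=0$ for all small metric balls $U$, continuity of $h_t$ already forces $h_t\equiv 0$.
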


Combining Theorems~\ref{thm:main-theorem} and~\ref{thm:uniqueness-conformal-factor} yields the following corollary.
The proof is given at the end of section~\ref{sec:topological-contact-dynamics}.

\begin{cor}[Uniqueness of topological Hamiltonian and conformal factor] \label{cor:unique-hamiltonian-conformal-factor}
Given a contact form $\alpha$ on $M$, a topological contact isotopy $\Phi$ defines a unique topological contact Hamiltonian $H$ and topological conformal factor $h$.
That is, if $(\Phi, H, h)$ and $(\Phi, F, f)$ are two topological contact dynamical systems of $(M,\alpha)$ with the same topological contact isotopy $\Phi$, then $H = F$ and $h = f$. 
\end{cor}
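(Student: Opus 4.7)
My plan is to combine Theorems~\ref{thm:main-theorem} and~\ref{thm:uniqueness-conformal-factor} by forming the ``difference'' topological contact dynamical system built from $(\Phi, H, h)$ and $(\Phi, F, f)$. Using the group structure on topological contact dynamical systems established in Section~\ref{sec:topological-contact-dynamics} (and in~\cite{ms:tcd1}), both inversion and composition make sense at the topological level and satisfy the same transformation laws as in the smooth setting. Consequently, the product $(\Phi, H, h) \cdot (\Phi, F, f)^{-1}$ is a topological contact dynamical system whose underlying isotopy is $\Phi \circ \Phi^{-1}$, namely the constant isotopy at the identity.

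Writing this product as $(\id, K, k)$, the smooth composition and inversion formulas, passed to the limit, give $k_t = (h_t - f_t) \circ \phi_t^{-1}$ and $K_t = H_t - e^{k_t} F_t$. Theorem~\ref{thm:uniqueness-conformal-factor} forces $k_t \equiv 0$, and since each $\phi_t^{-1}$ is a bijection of $M$ this is equivalent to $h = f$; substituting back, $K_t$ collapses to $H_t - F_t$, and Theorem~\ref{thm:main-theorem} then yields $H_t = F_t$ for almost every $t \in [0, 1]$, giving $H = F$.

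With this strategy, the deduction itself is essentially bookkeeping; the genuine content lies upstream, in the group structure. The main obstacle, which must have been addressed earlier in Section~\ref{sec:topological-contact-dynamics}, is to verify that the composition and inverse of topological contact dynamical systems are well defined: that the contact Hamiltonians of the approximating smooth products and inverses converge in the norm~\eqref{eqn:contact-norm} and the conformal factors in the uniform norm~\eqref{eqn:uniform-norm}, independently of the chosen smooth approximating sequences. Granted this machinery, the corollary reduces to a one-line invocation of the two uniqueness theorems.
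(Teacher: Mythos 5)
Your proposal is correct and follows essentially the same route as the paper: form the difference system via the group structure of $\TCDS\ma$ (Theorem~\ref{thm:group-structure}), observe its underlying isotopy is the constant identity isotopy, and apply Theorems~\ref{thm:main-theorem} and~\ref{thm:uniqueness-conformal-factor}. The only differences are cosmetic (you compose $(\Phi,H,h)$ with $(\Phi,F,f)^{-1}$ rather than $(\Phi,H,h)^{-1}$ with $(\Phi,F,f)$, and apply the conformal-factor uniqueness before the Hamiltonian uniqueness), and your identification of the group structure as the real prerequisite matches the paper, which cites it from \cite{ms:tcd1}.
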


The strategy usually applied to prove a theorem of the type of Theorem~\ref{thm:main-theorem} is to suppose that the conclusion is false, and then derive a contradiction with the corresponding `uniqueness of the isotopy' result.
This is the case for topological Hamiltonians of a symplectic manifold~\cite{buhovsky:ugh13}, topological basic contact Hamiltonians of a regular contact manifold~\cite{banyaga:uch12}, and the proof of Theorem~\ref{thm:main-theorem}.
Using different methods however, Viterbo earlier proved the uniqueness of continuous Hamiltonians of continuous Hamiltonian isotopies of a symplectic manifold~\cite{viterbo:ugh06}.

In~\cite[Theorem~6.4]{ms:tcd1} we proved the uniqueness of the topological contact isotopy of a topological contact Hamiltonian.
\begin{thm}\label{thm:uniqueness-isotopy}
Let $(\Phi, H, h)$ be a topological contact dynamical system of $(M,\alpha)$.
If $H_t = 0$ for almost every $t \in [0,1]$, then $\phi_t = \id$ for all $t \in [0,1]$.
\qed
\end{thm}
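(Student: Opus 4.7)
My plan is to argue by contradiction, reducing the statement to a contact energy--displacement inequality applied to the approximating smooth contact isotopies.

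Suppose the conclusion fails, so that $\phi_{t_0} \neq \id$ for some $t_0 \in (0,1]$. Choose a point $p \in M$ with $\phi_{t_0}(p) \neq p$, and pick a small open ball $U$ contained in a Darboux chart around $p$ such that $\phi_{t_0}(\overline{U}) \cap \overline{U} = \emptyset$. Because $\phi_i^t \to \phi_t$ uniformly, we have $\phi_i^{t_0}(U) \cap U = \emptyset$ for all sufficiently large $i$. The idea is that $U$ is then \emph{displaced} by the time-$t_0$ map of the smooth contact isotopy $\Phi_i$, and the contact norm of the generating Hamiltonian should be bounded below by a positive constant depending only on $U$ (and not on $i$).

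The key analytic input I would need is a contact-geometric energy--displacement inequality: there exists a constant $e(U) > 0$ (the contact displacement energy of $U$) such that for every smooth contact isotopy $\{\psi^t\}_{t \in [0,s]}$ generated by a contact Hamiltonian $F$ with $\psi^s(U) \cap U = \emptyset$, one has
\begin{equation*}
\int_0^s \!\!\left( \max_{x \in M} F(t,x) - \min_{x \in M} F(t,x) + \frac{1}{\int_M \nu}\left|\int_M F(t,x)\, \nu\right|\right) dt \; \geq \; e(U).
\end{equation*}
Granting this, applied to $\Phi_i$ on $[0,t_0]$, we obtain $\int_0^{t_0}\!(\cdots)_i\, dt \geq e(U)$ for all large $i$. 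But the hypothesis $H_t = 0$ a.e.\ and the convergence $H_i \to H$ in the contact norm~\eqref{eqn:contact-norm} yield $\int_0^{t_0}\!(\cdots)_i\, dt \to 0$, a contradiction. This forces $\phi_t = \id$ for every $t$.

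The main obstacle is therefore establishing the contact energy--displacement inequality above in a form adapted to the specific norm~\eqref{eqn:contact-norm} used to define topological contact dynamical systems. In the symplectic setting this plays the role of the classical Hofer energy--displacement inequality; in the contact setting one must account for the conformal rescaling $h_i$ of $\alpha$ and for the mean-value term in the norm, which has no symplectic analogue. I would approach it by first symplectizing $(M,\alpha)$, lifting the contact isotopy to a Hamiltonian isotopy of $M \times \R$ whose Hamiltonian is built from $H_i$ and $h_i$ (using that $h_i$ is uniformly bounded by the uniform convergence of $h_i \to h$), and then invoking a Hamiltonian energy--displacement estimate for a lifted displaceable set of the form $U \times [a,b]$. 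The mean-value term and the oscillation term together are precisely what is needed to control the Hamiltonian generating the lifted symplectic isotopy, which is the reason this particular norm was chosen in~\cite{ms:tcd1,ms:tcd2} in the first place.
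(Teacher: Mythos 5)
Your proposal is essentially the paper's own approach: the theorem is quoted from \cite{ms:tcd1}, where it is proved exactly as you outline --- displace a small ball, apply a contact energy--capacity inequality obtained by lifting the contact isotopy to a (cut-off) Hamiltonian isotopy of the symplectization and invoking the Hamiltonian energy--displacement inequality --- and the present paper carries out the identical argument for the local version in Theorems~\ref{thm:local-uniqueness-isotopy} and~\ref{thm:local-contact-energy-capacity-inequality}. The only caveat is that the inequality one actually obtains is $0 < C e^{-|h|_{\overline V}} \leq \|H\|$ with the constant depending on the conformal factor of the displacing isotopy, not a displacement energy $e(U)$ independent of it; but, as you yourself note, the uniform convergence $h_i \to h$ bounds the factors $e^{-|h_i|_{\overline V}}$ away from zero along the approximating sequence, so the contradiction goes through unchanged.
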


Therefore the converse \cite[Theorem~6.4]{ms:tcd1} to Corollary~\ref{cor:unique-hamiltonian-conformal-factor} also holds for topological contact dynamical systems.

\begin{cor}[Uniqueness of topological contact isotopy and conformal factor] \label{cor:unique-isotopy-and-conformal-factor}
Given a contact form $\alpha$ on $M$, a topological contact Hamiltonian $H$ defines a unique topological contact isotopy $\Phi$ with unique topological conformal factor $h$.
That is, if $(\Phi, H, h)$ and $(\Psi, H, f)$ are two topological contact dynamical systems of $(M,\alpha)$ with the same topological contact Hamiltonian $H$, then $\phi_t = \psi_t$ and $h_t = f_t$ for all $t \in [0,1]$. 
\qed
\end{cor}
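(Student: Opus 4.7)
The plan is to form the pointwise quotient $\chi_i^t := (\psi_i^t)^{-1}\circ \phi_i^t$ of two approximating sequences and extract a topological contact dynamical system to which Theorems~\ref{thm:uniqueness-isotopy} and \ref{thm:uniqueness-conformal-factor} apply directly. Let $\Phi_i = \{\phi_i^t\}$ and $\Psi_i = \{\psi_i^t\}$ be smooth contact isotopies with Hamiltonians $H_i, F_i$ and conformal factors $h_i, f_i$ approximating $(\Phi, H, h)$ and $(\Psi, H, f)$ respectively; in particular, $H_i \to H$ and $F_i \to H$ in the contact norm \eqref{eqn:contact-norm}, so that $H_i - F_i \to 0$ with respect to this norm.

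A direct computation starting from $\phi_i^t = \psi_i^t \circ \chi_i^t$ together with $(\phi_i^t)^*\alpha = e^{h_i^t}\alpha$ and $(\psi_i^t)^*\alpha = e^{f_i^t}\alpha$ shows that each $\chi_i = \{\chi_i^t\}$ is a smooth contact isotopy with contact Hamiltonian
\[
\bar H_i^t(x) \;=\; e^{-f_i^t(x)}\bigl(H_i^t - F_i^t\bigr)\bigl(\psi_i^t(x)\bigr)
\]
and conformal factor $\bar h_i^t = h_i^t - f_i^t \circ \chi_i^t$. The uniform convergences $\chi_i^t \to \psi_t^{-1}\circ\phi_t$ and $\bar h_i^t \to h_t - f_t \circ (\psi_t^{-1}\circ\phi_t)$ follow from continuity of composition and inversion in the $C^0$ topology on the homeomorphism group of $M$. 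For $\bar H_i$, precomposition with the homeomorphism $\psi_i^t$ preserves the $\max-\min$ portion of \eqref{eqn:contact-norm}, while the mean-value term of $(H_i^t - F_i^t)\circ\psi_i^t$ is dominated by $\max|H_i^t - F_i^t|$, which itself is controlled by $(\max - \min)(H_i^t - F_i^t)$ plus the mean of $H_i^t - F_i^t$; combined with the uniform bound on $e^{-f_i^t}$ (coming from the uniform convergence $f_i \to f$), this yields $\|\bar H_i\| \to 0$. Consequently $(\Psi^{-1}\Phi, 0, h - f \circ \Psi^{-1}\Phi)$ is a topological contact dynamical system with vanishing topological contact Hamiltonian.

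Theorem~\ref{thm:uniqueness-isotopy} then forces $\psi_t^{-1}\circ\phi_t = \id$ for every $t$, i.e.\ $\Phi = \Psi$, and under this equality the limiting conformal factor simplifies to $h_t - f_t$. Finally, Theorem~\ref{thm:uniqueness-conformal-factor} applied to $(\id, 0, h - f)$ yields $h_t = f_t$ for all $t \in [0,1]$. The main technical obstacle is the admissibility check for $(\chi_i, \bar H_i, \bar h_i)$ — equivalently, closure of the space of topological contact dynamical systems under composition and inversion and the compatibility of the three convergences required by the definition. This is precisely the machinery established in \cite{ms:tcd1, ms:tcd2}, after which the corollary reduces to a formal pairing of the uniqueness theorems above.
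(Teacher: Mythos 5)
Your argument is correct and follows essentially the same route the paper intends for this corollary: compose $(\Psi,H,f)^{-1}\circ(\Phi,H,h)$ to obtain a topological contact dynamical system with vanishing Hamiltonian, apply Theorem~\ref{thm:uniqueness-isotopy} to get $\Phi=\Psi$, and then Theorem~\ref{thm:uniqueness-conformal-factor} to get $h=f$. The only difference is presentational: the paper invokes the already-established group structure of $\TCDS\ma$ (Theorem~\ref{thm:group-structure}) instead of re-deriving the composition and its convergences at the level of approximating sequences, which is exactly the machinery you acknowledge citing at the end.
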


The previous results can also be interpreted as smooth contact rigidity results.
We prove the first statement~(1) below.
For the proof of statement~(2), see \cite[Corollary~3.4]{ms:tcd2}. 

\begin{cor}[Contact rigidity]\label{cor:rigidity}
Let $(\Phi, H, h)$ be a topological contact dynamical system of $(M,\alpha)$.
\begin{enumerate}
\item \label{item:rigidity-isotopy}
If $\Phi$ is a smooth isotopy of diffeomorphisms, then $H$ and $h$ are smooth functions, $\Phi$ is the smooth contact isotopy generated by the smooth contact Hamiltonian $H$, and $\phi_t^*\alpha = e^{h_t}\alpha$.
\item \label{item:rigidity-hamiltonian}
Conversely if $H$ is a smooth function, then both the isotopy $\Phi$ and function $h$ are smooth, $\Phi$ is the contact isotopy generated by $H$, and $\phi_t^*\alpha = e^{h_t}\alpha$.
\end{enumerate}
In both cases the function $h_t$ is given by 
	\[ h_t = \int_0^t (dH_s(R_\alpha))\circ \phi^s_H\, ds. \]
\end{cor}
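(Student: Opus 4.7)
The plan is to reduce statement~(1) to the uniqueness Corollary~\ref{cor:unique-hamiltonian-conformal-factor}, with the principal intermediate step being the upgrade of $\Phi$ from a smooth isotopy of diffeomorphisms to a smooth contact isotopy. Once that upgrade is in place, the classical smooth theory supplies a smooth contact Hamiltonian, which the uniqueness corollary forces to coincide with $H$, and the formula for $h_t$ reduces to a standard ODE computation.

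First I would show that $\Phi$ is a smooth contact isotopy and that $\phi_t^*\alpha = e^{h_t}\alpha$. By definition of topological contact dynamical system there are smooth contact isotopies $\Phi_i = \{\phi_i^t\}$ with smooth conformal factors $h_i^t$ satisfying $(\phi_i^t)^*\alpha = e^{h_i^t}\alpha$, such that $\phi_i^t \to \phi_t$ and $h_i^t \to h_t$ uniformly. Since only $C^0$ convergence of the diffeomorphisms is available, the identity $(\phi_i^t)^*\alpha = e^{h_i^t}\alpha$ does not pass to the limit by elementary calculus. The input I would invoke is the contact $C^0$-rigidity of a single map established in the companion paper~\cite{ms:tcd2}: a smooth diffeomorphism that is a uniform limit of contactomorphisms whose conformal factors converge uniformly is itself a contactomorphism with the limiting conformal factor. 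Applied at each fixed $t$ this produces $\phi_t^*\alpha = e^{h_t}\alpha$, exhibiting $\Phi$ as a smooth contact isotopy and $h$ as smooth.

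Next, since $\Phi$ is now a smooth contact isotopy, the classical smooth theory assigns it a unique smooth contact Hamiltonian $H'$. The constant sequence $\Phi_i \equiv \Phi$, $H_i \equiv H'$, $h_i \equiv h$ trivially makes $(\Phi, H', h)$ a topological contact dynamical system. Applying Corollary~\ref{cor:unique-hamiltonian-conformal-factor} to $(\Phi, H, h)$ and $(\Phi, H', h)$ forces $H = H'$, so $H$ is smooth and $\Phi$ is generated by $H$. For the displayed formula for $h_t$, differentiating $(\phi_H^t)^*\alpha = e^{h_t}\alpha$ in $t$ and using the standard identity $L_{X_{H_t}}\alpha = dH_t(R_\alpha)\,\alpha$ for the contact vector field of $H_t$ yields $\dot h_t = dH_t(R_\alpha)\circ \phi_H^t$ with $h_0 = 0$, whose integral form is precisely the stated expression.

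The main obstacle is clearly the first step: uniform convergence of the maps $\phi_i^t$ gives no control on their derivatives, so the pullback identity $(\phi_i^t)^*\alpha = e^{h_i^t}\alpha$ has no elementary limit, and genuinely securing $\phi_t^*\alpha = e^{h_t}\alpha$ requires the Gromov--Eliashberg type contact rigidity from~\cite{ms:tcd2}. After this is in place, the remaining arguments are direct applications of Corollary~\ref{cor:unique-hamiltonian-conformal-factor} and smooth contact dynamics.
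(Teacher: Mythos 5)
Your proposal is correct and follows essentially the same route as the paper: first invoke $C^0$-rigidity of contact diffeomorphisms (with uniformly convergent conformal factors) to upgrade $\Phi$ to a smooth contact isotopy with conformal factor $h$, then apply Corollary~\ref{cor:unique-hamiltonian-conformal-factor} to identify $H$ with the smooth generating Hamiltonian, the formula for $h_t$ being the standard smooth computation~(\ref{eqn:conformal-factor}). The only cosmetic difference is the citation: the rigidity input is \cite[Theorem~1.3]{ms:tcd1} rather than the companion paper \cite{ms:tcd2}.
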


In fact by \cite[Lemma~13.1]{ms:tcd1}, Corollary~\ref{cor:rigidity} (1) is equivalent to Corollary~\ref{cor:unique-hamiltonian-conformal-factor}, and Corollary~\ref{cor:rigidity} (2) is equivalent to Corollary~\ref{cor:unique-isotopy-and-conformal-factor}.
As mentioned in \cite{ms:tcd1}, smoothness of the conformal factor $h$ on the other hand does not imply that $\Phi$ or $H$ is smooth; non-smooth strictly contact dynamical systems $(\Phi, H, 0)$ are constructed in~\cite{banyaga:uch12}.

\begin{proof}[Proof of (1)]
By rigidity of contact diffeomorphisms \cite[Theorem~1.3]{ms:tcd1}, the limit $\Phi$ is a smooth contact isotopy, and the limit $h$ of the conformal factors $h_i$ coincides with the smooth conformal factor of the smooth contact isotopy $\Phi$.
By Corollary~\ref{cor:unique-hamiltonian-conformal-factor}, the topological contact Hamiltonian of the limit isotopy is equal to the smooth contact Hamiltonian that generates the isotopy $\Phi$.
\end{proof}

The first part of the paper (sections~\ref{sec:contact-dynamics} and~\ref{sec:proof-main-theorem}) contains background material and the proof of Theorem~\ref{thm:main-theorem}.
Section~\ref{sec:local-uniqueness} provides proofs of local versions of the theorems in the introduction.
The final part of the paper (sections~\ref{sec:corollaries} and~\ref{sec:geodesics}) concerns applications of the uniqueness theorems to smooth and topological contact dynamics, and topological rigidity of geodesic flows of Riemannian manifolds.

\section{Preliminaries on smooth and topological contact dynamics}\label{sec:contact-dynamics}

\subsection{Smooth contact dynamics}\label{sec:smooth-contact-dynamics}

A smooth completely non-integrable hyperplane sub-bundle $\xi \subset TM$ of the tangent bundle $TM$ of a smooth manifold $M$ is called a {\em contact structure} or a {\em contact distribution} on $M$.
A contact structure $\xi$ is locally defined by a smooth differential $1$-form $\alpha$ on $M$, a {\em contact form}, such that $\xi = \ker \alpha$, and the non-integrability condition satisfied by $\xi$ is equivalent to a non-degeneracy condition satisfied by $\alpha$ (locally where $\alpha$ is defined), 
	\[ \alpha \wedge (d\alpha)^n \neq 0, \]
where the necessarily odd dimension of $M$ is $2n+1$.
We assume that $\xi$ is cooriented, so that $\alpha$ is globally defined, and $\alpha \wedge (d\alpha)^n$ is a volume form on $M$.
The choice of contact form $\alpha$ is obviously not unique; any other $1$-form $e^g \alpha$ where $g: M \to \R$ is a smooth function defines the same cooriented contact structure and orientation on $M$, and conversely, if $\beta$ is another smooth $1$-form that defines the same cooriented contact structure $\xi$, then there exists a smooth function $g: M \to \R$ such that $\beta = e^g \alpha$. 

The starting point in smooth contact dynamics is made by fixing the choice of a contact form $\alpha$ defining $\xi$.
This choice determines the important Reeb vector field $R_\alpha$ defined by the equations
	\[ \iota(R_\alpha)d \alpha = 0 \ {\rm and} \ \iota(R_\alpha)\alpha = 1, \]
where $\iota(X)\eta$ denotes the interior product of a differential form $\eta$ with a vector field $X$.
One way to construct symmetries of the contact structure is to begin with a smooth function.
Observe that any vector field $X$ on $M$ can be written as $X = H R_\alpha + Y$ for some function $H:M \to \R$ and section $Y$ of the contact distribution $\xi$, and thus the two equations 
\begin{align}\label{eqn:contact-ham}
	\iota (X) \alpha = H \ \ \mbox{and} \ \ \iota (X) d\alpha = (R_\alpha . H) \alpha - dH
\end{align}
posses a unique solution, which we denote $X_H$.
Here $R_\alpha . H = \iota(R_\alpha) dH$ denotes the derivative of $H$ along $R_\alpha$.
If $\L_{X_H}$ denotes the Lie derivative along the vector field $X_H$, then Cartan's identity yields
	\[ \L_{X_H}\alpha = d (\iota(X_H) \alpha) + \iota(X_H) d\alpha = (R_\alpha . H) \alpha. \]
Thus a vector field $X_H$ defined by~(\ref{eqn:contact-ham}) satisfies $\L_{X_H} \alpha = \mu\, \alpha$ for a smooth function $\mu : M \to \R$, i.e.\ $X_H$ is an infinitesimal automorphism of the contact structure $\xi$, or a {\em contact vector field}, and $H$ is called its {\em contact Hamiltonian function}.
We assume that $M$ is closed, i.e.\ compact without boundary, and for simplicity connected.
Denote by $\Phi_H = \{ \phi^t_H \}$ the solution to the corresponding differential equation
	\[ \frac{d}{dt}\phi_t = X_H\circ \phi_t, \quad \phi_0 = \id ,\]
and observe that for each time $t$,
\begin{align}\label{eqn:contact-flow}
(\phi^t_H)^*\alpha = e^{h_t}\alpha,
\end{align}
where
\begin{align}\label{eqn:conformal-factor}
h_t = \int_0^t (R_\alpha . H) \circ \phi^s_H\, ds. 
\end{align}
Hence at each time $t$, $\phi^t_H$ is a {\em contact diffeomorphism}, i.e.\ a diffeomorphism $\phi$ of $M$ such that $\phi_* \xi = \xi$.
In short, given a choice of contact form $\alpha$, a smooth function $H : M \to \R$ defines a vector field $X_H$ whose smooth flow $\{ \phi^t_H \}$ consists of contact diffeomorphisms for all times $t$.

The preceding is an example of a {\em smooth contact isotopy}, where a smooth isotopy $\Phi = \{ \phi_t \}_{0 \le t \le 1}$ of diffeomorphisms of $M$ is called {\em contact} if each time-$t$ map $\phi_t$ is a contact diffeomorphism, i.e.\ there exists a smooth family of functions $h_t : M \to \R$ on $M$ such that~(\ref{eqn:contact-flow}) holds.
In fact, if we allow time-dependent Hamiltonian functions in the previous construction, then every smooth contact isotopy arises in this way. 
Let $X = \{ X_t \}_{0 \le t \le 1}$ denote the time-dependent smooth vector field generating a smooth contact isotopy $\Phi$ in the sense that
	\[	\frac{d}{dt} \phi_t = X_t \circ \phi_t, \]
and denote by $ H_t $ the smooth time-dependent function $H \colon [0,1] \times M \to \R$ defined by $H_t = \alpha (X_t)$.
An elementary calculation shows that the vector field $X_t$ satisfies $\iota(X_t) d\alpha = \mu_t \alpha - dH_t$, with $\mu_t = R_\alpha . H_t$ so that~(\ref{eqn:contact-ham}) holds, and the functions $\mu_t$ and $h_t$ are related by~(\ref{eqn:conformal-factor}).

A smooth contact isotopy $\Phi$ is called {\em strictly contact} with respect to $\alpha$ if each time-$t$ map $\phi_t$ satisfies $\phi^*_t \alpha = \alpha$.
In this case the conformal factor $h$ vanishes identically, and the generating Hamiltonian $H$ satisfies $R_\alpha . H_t = 0$ for every $t \in [0,1]$.
Such a Hamiltonian is called {\em basic}.
Similarly a contact diffeomorphism $\phi$ is called strictly contact with respect to $\alpha$ if it satisfies $\phi^* \alpha = \alpha$.

Thus the choice of contact form $\alpha$ produces a one-to-one correspondence between smooth contact isotopies with their smooth contact Hamiltonians, and so from this point of view, the choice of contact form may be thought of as the normalization condition in contact dynamics.
We write $\Phi = \Phi_H$ for a smooth contact isotopy generated by the smooth Hamiltonian $H$ and with smooth conformal factor $h$, and such a triple $(\Phi, H, h)$ is what we call a {\em smooth contact dynamical system}.
We denote the group of smooth contact dynamical systems by $\CDS(M,\alpha)$, while the group of contact diffeomorphisms is denoted by $\Diff (M,\xi)$, and $\Diff_0 (M,\xi)$ denotes its identity component.
The group $\SCDS \ma$ of {\em smooth strictly contact dynamical systems} of $\ma$ consists of triples $(\Phi, H, h)$ where $\Phi$ is strictly contact, $H$ is basic, and $h$ vanishes identically. 
The group of strictly contact diffeomorphisms and its identity component are denoted $\Diff \ma$ and $\Diff_0 \ma$, respectively.

For some contact manifolds, the collection of basic functions consists only of functions that depend only on time.
This further distinguishes the study of contact dynamics from Hamiltonian or strictly contact dynamics.

\begin{pro}\label{pro:eberlein}
Let $(B,g)$ be a closed simply-connected Riemannian manifold with strictly negative sectional curvature.
Then the Reeb vector field on the unit cotangent bundle with its canonical contact form has a dense orbit.
\end{pro}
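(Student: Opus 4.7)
The plan is to reduce the statement to Anosov's classical theorem on topological transitivity of the geodesic flow of a closed negatively curved manifold.

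First I would identify the Reeb flow on the unit cotangent bundle $(S^*B, \alpha|_{S^*B})$ with the geodesic flow on the unit tangent bundle $SB$. The musical isomorphism $g^\flat \colon TB \to T^*B$ induced by the Riemannian metric restricts to a diffeomorphism $SB \to S^*B$, and a standard computation shows that under this identification the canonical $1$-form $\lambda = p\,dq$ on $T^*B$ pulls back to a contact form on $SB$ whose Reeb vector field is precisely the geodesic spray. Consequently the Reeb flow on $S^*B$ is conjugate to the geodesic flow on $SB$, and producing a dense Reeb orbit is equivalent to producing a dense geodesic orbit.

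Next I would invoke Anosov's theorem: for a closed Riemannian manifold of strictly negative sectional curvature the geodesic flow on $SB$ is an Anosov flow. The Liouville measure on $SB$ is preserved by the flow and has full support, so Poincar\'e recurrence forces the non-wandering set to equal all of $SB$. A classical result on Anosov flows then implies that such a flow --- whose non-wandering set is the entire manifold --- is topologically transitive, i.e.\ admits a dense orbit; this can be proved either through the density of stable and unstable manifolds or via Bowen's specification property. Transporting this dense orbit back to $S^*B$ yields the required dense Reeb orbit.

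The main obstacle is not really an obstacle: Anosov's theorem does all the dynamical work, and the only contact-theoretic input is the Reeb/geodesic identification, which is a routine sign-convention check. One technical remark is in order: by the Cartan--Hadamard theorem a simply-connected complete manifold of strictly negative sectional curvature is diffeomorphic to $\R^n$ and therefore cannot be closed, so the hypotheses are vacuous as literally written; the plan above addresses the evidently intended statement in which the simply-connectedness hypothesis is dropped (or reinterpreted as a condition on the universal cover, which is of course automatic from Cartan--Hadamard).
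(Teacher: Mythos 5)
Your proposal is correct and takes essentially the same route as the paper: identify the Reeb flow on $ST^*B$ with the geodesic flow on $STB$ via the metric, then invoke classical hyperbolic dynamics for a dense geodesic orbit (the paper cites Eberlein and Katok--Hasselblatt for transitivity, and your derivation via the Anosov property, full-support invariant measure, and transitivity of Anosov flows with non-wandering set everything is precisely the content of those citations). Your side remark is also correct: by Cartan--Hadamard a closed simply-connected manifold cannot carry a metric of strictly negative curvature, so the simple-connectivity hypothesis is a slip and the intended (and proved) statement is for closed negatively curved manifolds without that assumption.
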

\begin{proof}
By~\cite[Theorem~6.4]{eberlein:gfn73} or~\cite[Theorem~17.6.2 and Theorem~18.3.6]{hasselblatt:imt95} the hypotheses imply the existence of a dense orbit of the geodesic flow on the unit tangent bundle $ST B$.
Under the identification of the unit tangent bundle with the unit cotangent bundle $ST^* B$ via the metric $g$, the corresponding Reeb flow possesses a dense orbit.
\end{proof}
See section~\ref{sec:geodesics} for further details on the identification of the geodesic flow of $ST B$ with the Reeb flow of $ST^* B$.

\begin{pro}\label{pro:dense}
Suppose $(M,\xi)$ is a contact manifold with a contact form $\alpha$ that admits a dense Reeb orbit $\mathcal{O}$.
Then every basic function depends only on time, and $\ma$ admits no strictly contact isotopies other than reparameterizations of the Reeb flow.
In particular, each connected component of $\Diff \ma$ is isomorphic to $\R$, and a strictly contact diffeomorphism $\phi$ is in $\Diff_0 \ma$ if and only if there exists a point $x \in M$ such that $x$ and $\phi (x)$ both lie on $\mathcal{O}$.
\end{pro}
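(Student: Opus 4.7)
The plan is to establish the four claims in sequence, in each case invoking the dense Reeb orbit through the single mechanism that a continuous $R_\alpha$-invariant function on $M$ is constant.

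First I would record that every basic function $F : M \to \R$ is constant. Since $R_\alpha . F = 0$, $F$ is constant along each Reeb orbit, hence constant on the dense orbit $\mathcal{O}$, and by continuity constant on $M$. Applied pointwise in $t$, a basic time-dependent Hamiltonian $H_t$ depends only on $t$.

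Next, for a strictly contact isotopy $\Phi = \{\phi_t\}$, the generating contact Hamiltonian $H_t$ is basic, hence by the previous step a function of $t$ alone. The defining equations~(\ref{eqn:contact-ham}) then collapse to $X_{H_t} = H_t R_\alpha$, so solving $\tfrac{d}{dt}\phi_t = X_{H_t}\circ\phi_t$ yields $\phi_t = \phi_{R_\alpha}^{\tau(t)}$ with $\tau(t) = \int_0^t H_s\, ds$, a reparameterization of the Reeb flow. Consequently every element of $\Diff_0\ma$ has the form $\phi_{R_\alpha}^T$ for some $T \in \R$, and $T \mapsto \phi_{R_\alpha}^T$ is a surjective group homomorphism $\R \to \Diff_0\ma$. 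Its injectivity uses the dense orbit a second time: a period $T_0 > 0$ of the Reeb flow would make every orbit periodic, so $\mathcal{O}$ would be a compact $1$-dimensional submanifold, contradicting its density in $M$ of dimension $2n+1 \ge 3$. Hence $\Diff_0\ma \cong \R$, and every connected component of $\Diff\ma$, being a coset of the identity component, is likewise a copy of $\R$.

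For the final equivalence I would exploit that any strictly contact diffeomorphism $\psi$ commutes with the Reeb flow: $\psi^*\alpha = \alpha$ forces $\psi^*d\alpha = d\alpha$, and since $R_\alpha$ is uniquely characterized by $\alpha$ and $d\alpha$, one has $\psi_* R_\alpha = R_\alpha$. The forward implication is immediate, as $\phi = \phi_{R_\alpha}^T$ preserves $\mathcal{O}$ setwise. For the converse, given $x,\phi(x) \in \mathcal{O}$, choose $T$ with $\phi_{R_\alpha}^T(x) = \phi(x)$ and set $\psi = (\phi_{R_\alpha}^T)^{-1}\circ\phi$; then $\psi$ is strictly contact, commutes with the Reeb flow, and fixes $x$, so $\psi(\phi_{R_\alpha}^s(x)) = \phi_{R_\alpha}^s(x)$ for every $s$, i.e.\ $\psi$ pointwise fixes the orbit $\mathcal{O}$. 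Density and continuity then force $\psi = \id$, so $\phi = \phi_{R_\alpha}^T \in \Diff_0\ma$.

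The only mildly delicate point is the injectivity of the $\R$-parameterization in the second paragraph, which rests on the elementary observation that a compact $1$-dimensional orbit cannot be dense in a manifold of dimension $\ge 3$; the remaining arguments are essentially formal, combining the invariant-function principle with the commutation relation $\psi_* R_\alpha = R_\alpha$.
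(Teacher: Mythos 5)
Your proposal is correct and follows essentially the same route as the paper: density of $\mathcal{O}$ forces basic functions to be constant, hence strictly contact isotopies are reparameterized Reeb flows, and the final equivalence is obtained from the fact that strictly contact diffeomorphisms commute with the Reeb flow together with density and continuity (your reduction to a map fixing $\mathcal{O}$ pointwise is just a repackaging of the paper's limit computation $\phi(y_k)=\phi_R^s(y_k)\to\phi_R^s(y)$). The only addition is your explicit argument that the parameterization $T\mapsto\phi_R^T$ is injective (no periodic dense orbit), a point the paper asserts without comment.
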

\begin{proof}
A basic function $H_t$ is constant along Reeb orbits, and since $\mathcal{O}$ is dense, $H_t$ must be constant on $M$.
Thus $H$ generates a reparameterization of the Reeb flow, and a diffeomorphism $\phi \in \Diff_0 \ma$ is of the form $\phi_R^s$ for a unique real number $s$, where $\Phi_R$ denotes the Reeb flow of $\ma$.

Suppose that $\phi \in \Diff \ma$, and there exists $x \in M$ such that $x$ and $\phi(x)$ lie on $\mathcal{O}$.
Denote by $s$ the unique real number such that $\phi(x) = \phi^s_R(x)$.
Let $ y \in M$, and choose a sequence $y_k \in \mathcal{O}$ that converges to $y$.
By construction, $y_k = \phi^{s_k}_R(x)$ for some $s_k \in \R$, and
\begin{align*}
\phi(y) & = \lim_{k \to \infty} \phi(y_k) \\
 & = \lim_{k \to \infty}\left( \phi^{s_k}_R \circ \phi \circ (\phi_R^{s_k})^{-1} \right)(y_k) \\
 & = \lim_{k \to \infty} \phi^{s_k}_R \circ \phi(x) = \lim_{k \to \infty} \phi^{s_k}_R \circ \phi^s_R(x)= \lim_{k \to \infty} \phi^s_R \circ \phi^{s_k}_R(x)\\
 & = \lim_{k \to \infty} \phi^s_R(y_k)\\
 & = \phi^s_R(y),
\end{align*}
since strictly contact diffeomorphisms commute with the Reeb flow.
\end{proof}

In some sense manifolds $\ma$ that admit a dense Reeb orbit are the opposite of regular contact manifolds, where every orbit is closed and the group $\Diff_0 \ma = \Ham (M/S^1, \omega)$ is as large as it can be.

\subsection{Topological contact dynamics}\label{sec:topological-contact-dynamics}
The extension of smooth contact dynamics to topological dynamics results from the completion of the group of smooth contact dynamical systems with respect to the {\em contact metric} $d_\alpha$, which encodes the isotopies' topological and dynamical data.
See \cite{mueller:ghh07, mueller:ghc08} for the case of Hamiltonian dynamics of a symplectic manifold, and \cite{banyaga:uch12} for the case of the dynamics of a contact form.

The contact distance between two smooth contact dynamical systems $(\Phi_H, H, h)$ and $(\Phi_F, F,f)$ of $\ma$ is given by
	\[ d_\alpha((\Phi_H, H, h), (\Phi_F, F, f)) = \overline d(\Phi_H, \Phi_F) + | h - f | + \| H - F \|, \]
where $\overline d$ denotes a complete metric that induces the $C^0$-topology on the group of isotopies of homeomorphisms of $M$, and $| \cdot|$ and $\| \cdot \|$ denote the norms in equations~(\ref{eqn:uniform-norm}) and~(\ref{eqn:contact-norm}), respectively.
The contact metric is studied in detail in \cite{ms:tcd1, ms:tcd2}, where the relationship between topological contact and Hamiltonian dynamics is also thoroughly explored.
We caution the reader that the convergence of any two terms in the sequence $(\Phi_{H_i}, H_i, h_i)$ of triples does not imply the convergence of the remaining sequence.
Examples are given in~\cite[section~8]{ms:tcd1}.

Recall that a triple $(\Phi, H, h)$ is called a topological contact dynamical system of $\ma$ if it is the limit with respect to the contact metric $d_\alpha$ of a sequence $(\Phi_{H_i}, H_i, h_i)$ of smooth contact dynamical systems of $\ma$.
Note that the uniform metric on the group of homeomorphisms (or isotopies of homeomorphisms) of $M$ is never complete.
However, a sequence of isotopies of homeomorphisms of $M$ that uniformly converges to an isotopy of {\em homeomorphisms} of $M$ is $C^0$-Cauchy and moreover $C^0$-converges to the same limit.
We showed in~\cite{ms:tcd1} that the collection $\PHomeo(M,\xi)$ of topological contact isotopies of $(M,\xi)$ forms a group, and as the notation suggests does not depend on the choice of contact form $\alpha$ such that $\ker \alpha = \xi$.

The norm~(\ref{eqn:uniform-norm}) on the space of conformal factors is complete in that a Cauchy sequence of smooth conformal factors converges to a continuous time-dependent function on $M$.
We refer to such a limit as a {\em topological conformal factor}.
The contact norm~(\ref{eqn:contact-norm}) is also complete in the following sense.
An equivalence class of Cauchy sequences with respect to the contact norm~(\ref{eqn:contact-norm}) of smooth contact Hamiltonian functions determines a {\em topological Hamiltonian function} $H$, which can be thought of as an element of the space $L^1 ([0,1],C^0 (M))$ of $L^1$-functions of the unit interval taking values in the space $C^0 (M)$ of continuous functions of $M$.
Any two such representatives of the equivalence class are equal almost everywhere in time, and such a representative function $H$ can be defined to be any continuous function at the remaining times $t$ belonging to a set of measure zero.

%
The set $\TCDS \ma$ of topological contact dynamical systems forms a group containing the group of smooth contact dynamical systems as a  subgroup.
In the case of smooth contact isotopies and contact Hamiltonians the following identities are simple consequences of standard techniques for ordinary differential equations.
However in the topological setting more sophisticated techniques are required.
See \cite[section~9]{ms:tcd1} for further details and the proof.
\begin{thm}{\cite[Theorem~6.5]{ms:tcd1}}\label{thm:group-structure}
The set $\TCDS \ma$ admits the structure of a topological group, where for two topological contact dynamical systems $(\Phi_H, H, h)$ and $(\Phi_F, F, f)$ the group operations are given by
\begin{align*}
( \Phi_H, H, h ) \circ (\Phi_F, F, f) = (\Phi_H \circ \Phi_F, H \# F, h \# f) \ {\rm and} \ (\Phi_H, H, h)^{-1} = (\Phi_H^{-1}, \overline H, \overline h),
\end{align*}
where the contact Hamiltonians $H\# F$ and $\overline H$ are given respectively at each time $t$ by
\begin{align*}
(H\# F)_t  & = H_t + (e^{h_t}\cdot F_t ) \circ (\phi^t_H)^{-1} \ {\rm and}\ \overline H_t = -e^{-h_t} ( H_t \circ \phi^t_H).
\end{align*}
The conformal factors $h\# f$ and $\overline h$ are defined at each $t$ by
\begin{align*}
(h \# f)_t & = f_t + h_t \circ (\phi^t_H \circ \phi^t_F) \ {\rm and} \ \overline h_t = -h_t \circ (\phi^t_H)^{-1}. \qed
\end{align*}
\end{thm}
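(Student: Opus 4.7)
The plan is to verify the composition and inversion identities first for smooth contact dynamical systems, then propagate them to the topological setting by approximation with respect to the contact metric $d_\alpha$.

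For smooth $(\Phi_H, H, h)$ and $(\Phi_F, F, f)$, I would differentiate $\phi_H^t \circ \phi_F^t$ in $t$ to identify the generating vector field as $X_H^t + (\phi_H^t)_* X_F^t$; contracting with $\alpha$ and using $(\phi_H^t)^*\alpha = e^{h_t}\alpha$ recovers the formula for $(H\#F)_t$. The identity $(\phi_H^t \circ \phi_F^t)^*\alpha = (\phi_F^t)^*(e^{h_t}\alpha)$ together with the standard pullback transformation for a function times a form gives the conformal factor $(h\#f)_t$. A parallel computation starting from $\phi_H^t \circ (\phi_H^t)^{-1} = \id$ produces the generator $-(\phi_H^t)^* X_H^t$ of $\Phi_H^{-1}$, and hence the formulas for $\overline{H}_t$ and $\overline{h}_t$.

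Next, given topological systems $(\Phi_H, H, h)$ and $(\Phi_F, F, f)$ in $\TCDS\ma$, I would fix smooth approximating sequences $(\Phi_{H_i}, H_i, h_i) \to (\Phi_H, H, h)$ and $(\Phi_{F_i}, F_i, f_i) \to (\Phi_F, F, f)$ in $d_\alpha$, apply the smooth formulas to their composites, and verify that $(\Phi_{H_i} \circ \Phi_{F_i},\, H_i\#F_i,\, h_i\#f_i)$ is Cauchy in $d_\alpha$. The $C^0$-convergence of $\Phi_{H_i} \circ \Phi_{F_i}$ uses uniform continuity of the limit $\Phi_H$ on the compact manifold $M$ combined with the uniform convergence $\Phi_{F_i} \to \Phi_F$, while the uniform convergence of $h_i\#f_i$ follows from uniform convergence of the $h_i$, $f_i$ and continuity of the limit $h$ on $M$. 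Once the three Cauchy estimates are in place, the limit is, by definition, a topological contact dynamical system whose components are the quantities in the statement.

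I expect the main obstacle to be the Cauchy property of $H_i \# F_i$ in the contact norm~(\ref{eqn:contact-norm}). The oscillation $\max - \min$ of $(e^{h_i^t} F_i^t) \circ (\phi_{H_i}^t)^{-1}$ is preserved by the homeomorphism $(\phi_{H_i}^t)^{-1}$, and is therefore controlled by uniform convergence of $e^{h_i}$ together with oscillation control on $F_i$ coming from the contact norm. The mean-value term, however, requires a change of variables along the contact diffeomorphism $\phi_{H_i}^t$, whose Jacobian with respect to $\nu = \alpha \wedge (d\alpha)^n$ equals $e^{(n+1) h_i^t}$; uniform convergence of the $h_i$ then reduces the mean term to a perturbation of the $\nu$-mean of $F_i^t$. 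Integrating in $t$ and combining with the convergence of $H_i$ in $\|\cdot\|$ yields the Cauchy estimate. An analogous change of variables along $\phi_H^t$ handles $\overline{H}_i$ and $\overline{h}_i$. With composition and inversion well-defined on $\TCDS\ma$, the group axioms and continuity of the operations with respect to $d_\alpha$ follow from their smooth counterparts by passage to the limit, completing the proof.
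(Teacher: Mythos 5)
Your overall route (establish the identities for smooth systems by differentiating $\phi_H^t\circ\phi_F^t$ and $(\phi_H^t)^{-1}$, then pass to $d_\alpha$-limits) is the natural one and is the route the paper itself points to, but there is a genuine gap at exactly the step the paper flags as delicate: your Cauchy estimate for $H_i\#F_i$. Bounding the oscillation of each term $(e^{h_i^t}F_i^t)\circ(\phi_{H_i}^t)^{-1}$ (invariance of $\osc$ under a homeomorphism) and its mean (change of variables with Jacobian $e^{(n+1)h_i^t}$) controls the size of each term, but Cauchy-ness requires estimating the \emph{difference} $(e^{h_i^t}F_i^t)\circ(\phi_{H_i}^t)^{-1}-(e^{h_j^t}F_j^t)\circ(\phi_{H_j}^t)^{-1}$, and since the $F_i$ converge only in the contact norm they are not equicontinuous; closeness of $(\phi_{H_i}^t)^{-1}$ and $(\phi_{H_j}^t)^{-1}$ therefore gives no control on $G\circ(\phi_{H_i}^t)^{-1}-G\circ(\phi_{H_j}^t)^{-1}$ for $G=e^{h_i^t}F_i^t$. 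The repair is the same device you invoked for the isotopy and conformal-factor parts but omitted here: insert the limit, writing the difference as $\bigl[(e^{h_i^t}F_i^t)-(e^{h_t}F_t)\bigr]\circ(\phi_{H_i}^t)^{-1}+(e^{h_t}F_t)\circ(\phi_{H_i}^t)^{-1}-(e^{h_t}F_t)\circ(\phi_{H}^t)^{-1}$ plus the symmetric terms in $j$. The first bracket is handled because the contact norm is equivalent to the $L^1([0,1],C^0(M))$-norm (the inequality quoted from \cite[Lemma~2.4]{ms:tcd1} in section~\ref{sec:proof-main-theorem}), so no Jacobian or change of variables is needed for the mean term; the second requires uniform continuity of $F_t$ for a.e.\ $t$, uniform convergence $(\phi_{H_i}^t)^{-1}\to(\phi_H^t)^{-1}$ (which itself needs justification, via the metric $\overline d$ or compactness of $M$ together with the fact that the limit is an isotopy of homeomorphisms), and dominated convergence in $t$. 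As written, your mechanism does not yield the Cauchy property. The same insert-the-limit estimates are also what give continuity of composition and inversion at arbitrary elements of $\TCDS\ma$; ``passage to the limit from the smooth counterparts'' is not by itself an argument for the topological-group axioms.

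A second point: with the paper's convention $(\phi_H^t)^*\alpha=e^{h_t}\alpha$, the computation you describe yields $(h\#f)_t=f_t+h_t\circ\phi_F^t$, not the formula $f_t+h_t\circ(\phi_H^t\circ\phi_F^t)$ displayed in the statement. The former is the correct one: it is the only formula compatible with $(h\#\overline h)_t=0$ and with $\overline h_t=-h_t\circ(\phi_H^t)^{-1}$, and it is what the paper actually uses later (the conformal factor $-h_t+f_t$ of $\Phi^{-1}\circ\Phi$ in the proof of Corollary~\ref{cor:unique-hamiltonian-conformal-factor}). So do not claim your pullback computation ``recovers'' the displayed formula; it recovers $f_t+h_t\circ\phi_F^t$, and you should note the discrepancy in the displayed statement and carry the corrected formula through your limit argument.
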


Whereas in smooth contact dynamics the group of canonical transformations or changes of coordinates is given by the group $\Diff(M,\xi)$ of contact diffeomorphisms of $(M,\xi)$, in topological contact dynamics this role is played by the group of {\em topological automorphisms of the contact structure}.
Recall \cite[Definition~6.8 and Theorem~6.9]{ms:tcd1} that a homeomorphism $\phi$ of $M$ is a topological automorphism of the contact structure $\xi$ with unique topological conformal factor $h \in C^0(M)$ if there exists a sequence of contact diffeomorphisms $\phi_j \in \Diff(M,\xi)$ that uniformly converges to $\phi$ and whose smooth conformal factors $h_j$ uniformly converge to the continuous function $h$ on $M$.
The group of topological automorphisms is independent of the choice of contact form $\alpha$ defining $\xi$ \cite[Proposition~6.12]{ms:tcd1}, and will be denoted $\Aut (M,\xi)$.

We proved the following transformation law in \cite{ms:tcd1}.
\begin{thm}{\cite[Theorem~6.13]{ms:tcd1}}\label{thm:transformation-law}
Let $(\Phi_H, H, h)$ be a topological contact dynamical system of $(M,\alpha)$ and $\varphi \in \Aut (M, \xi)$ be a topological automorphism of the contact structure with topological conformal factor $g$.
Then $(\varphi^{-1} \circ \Phi_H \circ \varphi, H^\varphi , h^\varphi)$ is a topological contact dynamical system, where
	\[ (H^\varphi)_t = e^{-g}( H_t \circ \varphi) \ {\rm and} \ (h^\varphi)_t = h\circ \varphi + g - g \circ \varphi^{-1}\circ \phi^t_H \circ \varphi.\qed \]
\end{thm}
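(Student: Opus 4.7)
The plan is to verify the formulas by a direct smooth calculation and then pass to the topological statement by approximation. Choose a sequence $(\Phi_{H_i}, H_i, h_i) \in \CDS(M,\alpha)$ of smooth contact dynamical systems converging to $(\Phi_H, H, h)$ in the contact metric $\dalpha$, and smooth contact diffeomorphisms $\varphi_j \in \Diff(M,\xi)$ with smooth conformal factors $g_j$ satisfying $\varphi_j\to\varphi$ uniformly and $g_j\to g$ uniformly; both approximations exist by the very definitions of topological contact dynamical system and topological contact automorphism. Applying the smooth transformation law to each pair yields smooth contact dynamical systems $(\varphi_j^{-1}\circ\Phi_{H_i}\circ\varphi_j, H_i^{\varphi_j}, h_i^{\varphi_j})$, and a suitable diagonal subsequence should converge in $\dalpha$ to $(\varphi^{-1}\circ\Phi_H\circ\varphi, H^\varphi, h^\varphi)$.

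In the smooth case, set $\psi_t = \varphi^{-1}\circ\phi_H^t\circ\varphi$. The identities $(\varphi^{-1})^*\alpha = e^{-g\circ\varphi^{-1}}\alpha$, $(\phi_H^t)^*\alpha = e^{h_t}\alpha$, and $\varphi^*\alpha = e^g\alpha$ combine by a direct pullback computation to give $\psi_t^*\alpha = e^{(h^\varphi)_t}\alpha$ with the stated formula. Differentiating $\psi_t$ identifies its generating vector field as $Y_t = (\varphi^{-1})_*X_{H_t}$, and contracting $\alpha$ against $Y_t$ using the pullback formula for $\varphi^{-1}$ produces $(H^\varphi)_t = e^{-g}(H_t\circ\varphi)$.

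The straightforward convergences to verify in $\dalpha$ are $C^0$-convergence of the conjugated isotopies (which follows from the uniform continuity of $\varphi$ on the compact manifold $M$ together with $\Phi_{H_i}\to\Phi_H$ in $\dbar$ and $\varphi_j\to\varphi$ in $C^0$) and uniform convergence of $h_i^{\varphi_j}\to h^\varphi$ (which follows from the uniform convergences $h_i\to h$, $g_j\to g$, $\varphi_j\to\varphi$ and the uniform continuity of the limits $h$ and $g$).

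The principal obstacle is convergence of $H_i^{\varphi_j} = e^{-g_j}(H_i\circ\varphi_j)$ to $H^\varphi$ in the contact norm, because the mean-value term does not behave well under conjugation: a contact transformation rescales the canonical volume form $\nu$ by $e^{(n+1)g}$ rather than preserving it. The strategy is first to prove that the linear operator $F\mapsto e^{-g_j}(F\circ\varphi_j)$ is bounded on the contact norm, with operator norm controlled uniformly in $j$. The oscillation estimate follows from $\osc(F\circ\varphi_j) = \osc(F)$ together with the uniform bounds on $g_j$, while the mean is estimated by the change-of-variables identity $\varphi_j^*(\alpha\wedge(d\alpha)^n) = e^{(n+1)g_j}\,\alpha\wedge(d\alpha)^n$, which rewrites $\int_M e^{-g_j}(F\circ\varphi_j)\,\nu$ as $\int_M F\cdot e^{-(n+2)g_j\circ\varphi_j^{-1}}\,\nu$ and hence as a bounded multiple of $\osc(F)\int_M\nu + |\int_M F\,\nu|$. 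A triangle inequality then splits $H_i^{\varphi_j} - H^\varphi$ into a term controlled by $|g_j - g|\to 0$ uniformly, a term controlled by the bounded operator applied to $H_i - H\to 0$ in contact norm, and the residual $e^{-g}(H\circ\varphi_j - H\circ\varphi)$; the last term vanishes in contact norm because $H_t\in C^0(M)$ for almost every $t\in[0,1]$, so $H_t\circ\varphi_j\to H_t\circ\varphi$ uniformly on $M$, and dominated convergence in $t$ then completes the argument.
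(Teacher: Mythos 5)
This theorem is not proved in the present paper at all: it is imported verbatim from \cite[Theorem~6.13]{ms:tcd1} and stated with a \emph{qed}, so there is no in-paper argument to compare against; your proposal is, in effect, a reconstruction of the proof from the earlier paper, and it follows the natural (and, as far as one can tell, the intended) route. The smooth computation is correct: $\psi_t^*\alpha=\varphi^*(\phi_H^t)^*(\varphi^{-1})^*\alpha$ gives $(h^\varphi)_t$, and $\alpha((\varphi^{-1})_*X_{H_t})=e^{-g}(H_t\circ\varphi)$ gives $(H^\varphi)_t$. You also correctly isolate the real issue in the passage to the limit, namely convergence of $e^{-g_j}(H_i\circ\varphi_j)$ in the norm~(\ref{eqn:contact-norm}), and your treatment of the mean term via $\varphi_j^*(\alpha\wedge(d\alpha)^n)=e^{(n+1)g_j}\,\alpha\wedge(d\alpha)^n$, hence $\int_M e^{-g_j}(F\circ\varphi_j)\,\nu=\int_M F\,e^{-(n+2)\,g_j\circ\varphi_j^{-1}}\,\nu$, is exactly what is needed. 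One imprecision worth fixing: the oscillation of $e^{-g_j}(F\circ\varphi_j)$ is \emph{not} controlled by $\osc(F)$ and bounds on $g_j$ alone (take $F$ a large constant: $\osc(F)=0$ while $\osc(e^{-g_j}F)$ is large), so the estimate must be phrased as boundedness of the operator with respect to the full contact norm, using both $\osc(F_t)$ and $\frac{1}{\int_M\nu}\bigl|\int_M F_t\,\nu\bigr|$ to bound the oscillation of the image; since $\max|F_t|\le\osc(F_t)+\frac{1}{\int_M\nu}\bigl|\int_M F_t\,\nu\bigr|$, this is immediate and your uniform-in-$j$ operator bound survives. With that adjustment, together with the standard facts that on a compact manifold $\varphi_j\to\varphi$ uniformly forces $\varphi_j^{-1}\to\varphi^{-1}$ uniformly and that $H_t\in C^0(M)$ for a.e.\ $t$ justifies the dominated-convergence step, your diagonal argument is complete and correct.
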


In particular suppose that $(\Phi_H, H, h)$ and $(\Phi_F, F, f)$ are topological contact dynamical systems and that $\varphi $ is a topological automorphism of the contact structure with topological conformal factor $g$.
Corollary~\ref{cor:unique-isotopy-and-conformal-factor} implies that if $ H = e^{-g}(F \circ \varphi) $, then $\Phi_H = \varphi^{-1} \circ \Phi_H \circ \varphi$.
By Corollary~\ref{cor:unique-hamiltonian-conformal-factor}, the converse to this statement holds.
See Theorem~\ref{thm:topological-transformation-law}.
Here by a function of the form $F \circ \varphi$, etc.\ we mean the time-dependent function given by $F(t, \varphi(x))$ at $(t,x) \in [0,1] \times M$.
Note that the functions $h\# f$, $\overline h$, and $h^\varphi$ also depend on the isotopies $\Phi_H, \Phi_F$, and $\Phi_H^{-1}$ as indicated above.

Assuming Theorem~\ref{thm:main-theorem}, we are now prepared to give a proof of Corollary~\ref{cor:unique-hamiltonian-conformal-factor}.
\begin{proof}[Proof of Corollary~\ref{cor:unique-hamiltonian-conformal-factor}]
Suppose that $\Phi = \{ \phi_t\}$ is a topological contact isotopy of $(M,\xi)$, and that $(\Phi, H, h)$ and $(\Phi, F, f)$ are both topological contact dynamical systems of $(M,\alpha)$.
The composition law implies that the identity isotopy $\Id = \Phi^{-1}\circ \Phi$ of $M$ has the topological contact Hamiltonian $\Hbar \# F$ given by $ ( \Hbar \# F )_t = e^{-h_t}((F_t - H_t) \circ \phi_t)$, and moreover the topological conformal factor associated to $\Id = \Phi^{-1} \circ \Phi$ is given by the continuous function $-h_t + f_t$.
Theorem~\ref{thm:main-theorem} implies that for almost every $t \in [0,1]$, $F_t - H_t = 0$, or in other words the topological contact Hamiltonians $F$ and $H$ are equal.
Finally by Theorem~\ref{thm:uniqueness-conformal-factor}, for all $t \in [0,1]$, the topological conformal factor satisfies $-h_t + f_t = 0$.  
\end{proof}

\section{The proof of Theorem~\ref{thm:main-theorem}}\label{sec:proof-main-theorem}
Buhovsky and Seyfaddini's use of the space of autonomous null-Hamiltonians in their uniqueness proof for topological Hamiltonians of a symplectic manifold~\cite{buhovsky:ugh13} resembles a proof that translation is continuous in $L^p$ for $1 \leq p < \infty$ (see, e.g.\ \cite[Theorem~8.19]{wheeden:mai77}), and we likewise apply this strategy to prove Theorem~\ref{thm:main-theorem}.
The additional difficulties present for topological contact Hamiltonians include the fact that translations on $\R^{2n+1}$ do not in general preserve its standard contact form, and the existence of non-trivial conformal factors of contact isotopies.

\subsection{Null contact Hamiltonians}
By Theorem~\ref{thm:uniqueness-conformal-factor}, the topological conformal factor $h$ of a topological contact dynamical system of the form $(\Id, H, h)$, where $\Id = \{ \id \}$ denotes the constant isotopy at the identity, satisfies $h = 0$, and thus the following sets are equal
\begin{align*}
\{ H \in L^1 ( & [0,1] , C^0(M)) \mid ( \Id, H, h) \in \TCDS \ma  \} \\
& = \{ H \in L^1 ([0,1] , C^0(M)) \mid ( \Id, H, 0) \in \TCDS \ma  \} .
\end{align*}
Therefore we define the set $\nullham \ma $ of {\em null contact Hamiltonians} of $ \ma $ by
	\[ \nullham \ma  = \{ H \in L^1([0,1],C^0(M)) \mid ( \Id, H, 0) \in \TCDS \ma  \}. \]
Also of interest are the time-independent null contact Hamiltonians defined by
	\[ \nullautham \ma  = \{ H \in \nullham \ma  \mid H_t = H_s \in C^0(M) \ {\rm a.e.}\ s, t \in [0,1] \}. \]
We regard $\nullautham \ma$ as a subspace of $C^0(M)$. 
Then $\nullautham \ma  \subset C^0(M)$ and $\nullham \ma  \subset L^1([0,1], C^0(M))$ are closed in the $C^0$ and $L^1$-topologies, respectively.
The next lemma captures the algebraic properties of the sets $\nullham \ma$ and $\nullautham \ma$, generalizing \cite[Lemma~7]{buhovsky:ugh13} to include automorphisms of the contact structure.
\begin{lem}\label{lem:null-hamiltonians}
The sets $\nullham \ma $ and $ \nullautham \ma $ are closed under addition, taking inverses, and transformation by contact automorphisms $\phi \in \Aut(M, \xi)$.
Both of the sets $\nullham \ma$ and $\nullautham \ma$ are invariant under time reparametrizations, and $\nullautham \ma$ is a vector space over $\R$.
\end{lem}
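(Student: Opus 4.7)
The plan is to verify each closure property in turn, using the group law on $\TCDS\ma$ (Theorem~\ref{thm:group-structure}), the transformation law (Theorem~\ref{thm:transformation-law}), and direct manipulation of approximating sequences. The common mechanism in the first three parts is that substituting $\Phi = \Id$ and $h = 0$ into the formulas of those theorems triggers massive simplifications.

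First I would treat closure under addition and inversion by specializing the composition law to triples of the form $(\Id, H, 0)$. With $\phi^t_H = \id$ and $h_t = 0$, the formula $(H\#F)_t = H_t + (e^{h_t}F_t)\circ(\phi^t_H)^{-1}$ collapses to $H_t + F_t$, the conformal factor $(h\#f)_t = f_t + h_t\circ(\phi^t_H\circ\phi^t_F)$ collapses to $0$, and similarly $\overline H_t = -H_t$ and $\overline h_t = 0$. Hence addition and negation preserve $\nullham\ma$; since they visibly preserve time-independence, they also preserve $\nullautham\ma$. For closure under a contact automorphism $\varphi$ with topological conformal factor $g$, Theorem~\ref{thm:transformation-law} produces the conjugate isotopy $\varphi^{-1}\circ\Id\circ\varphi = \Id$, the transformed conformal factor $g - g\circ\varphi^{-1}\circ\id\circ\varphi = 0$, and the transformed Hamiltonian $(H^\varphi)_t = e^{-g}(H_t\circ\varphi)$, which is autonomous whenever $H$ is, since $\varphi$ and $g$ are time-independent.

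For invariance under time reparametrization, I would take an orientation-preserving smooth diffeomorphism $\zeta\colon[0,1]\to[0,1]$ and an approximating sequence $(\Phi_{H_i},H_i,h_i)\to(\Id,H,0)$. The reparametrized Hamiltonian $\zeta'(t)H_i(\zeta(t),x)$ generates $t\mapsto\phi^{\zeta(t)}_{H_i}$ with conformal factor $h_i(\zeta(t),x)$. A substitution $u=\zeta(t)$ inside the contact norm integral shows that the norm of the reparametrized difference equals $\|H_i-H\|$, while the uniform norm of the reparametrized conformal factor equals $|h_i|$ and the $C^0$-distance of the reparametrized isotopy from $\Id$ is bounded by $\overline d(\Phi_{H_i},\Id)$, giving convergence of the reparametrized sequence to the reparametrized limit in $\nullham\ma$.

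Finally, the vector space structure of $\nullautham\ma$ over $\R$ reduces to scalar multiplication by a real $c$, since addition is already handled. For $s\in(0,1]$, I would restrict an approximating sequence for $H\in\nullautham\ma$ to the subinterval $[0,s]$ and rescale time linearly back to $[0,1]$; the rescaled Hamiltonian $sH_i(su,x)$ converges in the contact norm to the autonomous $sH$, the rescaled isotopy remains uniformly identity-convergent, and so does the rescaled conformal factor. Iterating with closure under addition yields $cH\in\nullautham\ma$ for all $c\geq 0$, and closure under inversion handles $c<0$. The subtlest step I anticipate is precisely this one: showing that scalar rescaling of an autonomous null Hamiltonian stays inside $\nullautham\ma$ rather than falling merely into $\nullham\ma$ forces the use of the subinterval-rescaling trick rather than a generic time reparametrization, which would destroy autonomy whenever $\zeta'$ is non-constant.
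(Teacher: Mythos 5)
Your proposal is correct and follows essentially the same route as the paper: the algebraic closures come from specializing the group law and the transformation law to triples $(\Id, H, 0)$, and scalar multiplication in $\nullautham\ma$ comes from the linear time rescaling $\zeta(t)=st$ (your subinterval-restriction-and-rescale trick is exactly this), followed by addition for $c\geq 0$ and inversion for $c<0$. The only cosmetic difference is that you verify the reparametrization formula at the level of approximating sequences, while the paper simply records that $H^\zeta(t,x)=\zeta'(t)H(\zeta(t),x)$ generates $\{\phi_H^{\zeta(t)}\}$ with conformal factor $h_{\zeta(t)}$ for an arbitrary smooth $\zeta\colon[a,b]\to[0,1]$.
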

\begin{proof}
	%
The first three claims follow immediately from the identities in Theorem~\ref{thm:group-structure}.
If $H : [0,1] \times M \to \R$ is a topological Hamiltonian that generates the topological contact isotopy $\Phi_H = \{ \phi_H^t \}$, $a < b$ are real numbers, and $\zeta \colon [a,b] \to [0,1]$ is a smooth function, then the reparameterized isotopy
	\[ \Phi_{H^\zeta} = \{ \phi_{H^{\zeta}}^t \}_{a \le t \le b} = \{ \phi_H^{\zeta (t)} \}_{a \le t \le b} \]
is generated by the topological Hamiltonian $H^\zeta \colon [a,b] \times M \to \R$, defined by
\begin{align}\label{eqn:rep}
	H^\zeta (t, x) = \zeta' (t) \cdot H (\zeta(t), x),
\end{align}
where $\zeta'$ denotes the derivative of $\zeta$.
Since $\phi_{H^\zeta}^t = \phi_H^{\zeta (t)}$, the conformal factor $h^\zeta$ of the isotopy $\Phi_{H^\zeta}$ is given by $h_t^\zeta = h_{\zeta (t)}$.
Thus if $H \in \nullham \ma$ and $\zeta$ is as above,
\begin{align}\label{eqn:reparametrized-null-contact}
(\Id^\zeta, H^\zeta, h^\zeta) = (\Id, H^\zeta, 0) \in \TCDS \ma,
\end{align}
and therefore $H^\zeta \in \nullham \ma$.

In the special case $\zeta (t) = s t$ for a real number $s \in [0,1] $, we also write $H^\zeta = H^s$.
For a time-independent null contact Hamiltonian $H \in \nullautham \ma$ we have
\begin{align}\label{eqn:reparametrized-null-contact-autonomous}
(\Phi_{H^s}, H^s, h^s) = (\Id, H^s, 0) = (\Id, s H, 0),
\end{align}
and $s H \in \nullautham \ma$.
It follows that $\nullautham \ma $ is closed under scalar multiplication.
Indeed when $r \geq 0$, write $r = \sum s_j, 0\leq s_j \leq 1$, and apply our previous remarks, so that $r H \in \nullautham \ma$ if $H \in \nullautham \ma$.
Finally, the case $r < 0$ follows from the fact that $H \in \nullautham \ma$ implies $-H = \Hbar \in \nullautham \ma$. 
\end{proof}

\subsection{Translations and convolutions on the Heisenberg group}
Recall that in general linear translations do not preserve the standard contact form on $\R^{2n+1}$.
Thus we utilize the non-abelian Heisenberg group structure on $\R^{2n+1}$, cf.\ \cite{stein:har93}.
Identifying $\R^{2n+1}$ with $\H^n =  \C^n \times \R$, we write points $x = (x_1, \ldots, x_{2n+1})$ as pairs $x = (x', x_{2n+1})$, where $ x' = (x_1', x_2', \ldots, x_n')$ with $x_j' = x_{2j-1} + \i x_{2j}$ for $j = 1, \ldots, n$.
The group structure on $\H^n$ is defined by
	\[ x \cdot y = (x',x_{2n+1})\cdot (y',y_{2n+1}) =  \left(x' + y', x_{2n+1} + y_{2n+1} + \frac{1}{2} {\rm Im}\, \langle x', y' \rangle \right),\] 
where $\langle x', y' \rangle = \sum_{j=1}^n x'_j \cdot \bar y'_j$ denotes the standard Hermitian inner product on $\C^n$, and ${\rm Im}\, \langle y' , x' \rangle$ denotes its imaginary part.

Let $\tau = (\tau', \tau_{2n+1}) \in \H^n$.
A straightforward calculation shows that the diffeomorphism $R_\tau$ given by right multiplication by $\tau^{-1}$
\begin{align*}
& R_\tau: \H^n \to \H^n \qquad  R_\tau(x) = x \cdot \tau^{-1}
\end{align*}
preserves the contact form
 \[ \alpha_0 =   dx_{2n+1} - \frac{1}{2} \sum_{j=1}^n \left( x_{2j-1}dx_{2j} -  x_{2j} dx_{2j-1} \right),\]
i.e.\ $R_\tau^*\, \alpha_0 = \alpha_0$.
Thus for every $t \in [0,1]$, the map $R_{t \tau}$ is a strictly contact diffeomorphism, and  $\{ R_{t\tau} \}$ defines a strictly contact isotopy.
Left translations are also strictly contact.
In Heisenberg coordinates the basic contact Hamiltonian that generates $\{ R_{t\tau} \}$ is the function
\begin{gather}\label{eqn:heisenberg-translation}
F^\tau : \H^n \to \R \qquad F^\tau(x) = - \tau_{2n+1} - {\rm Im}\, \langle x', \tau' \rangle.
\end{gather}

Denote by $ \nu_0$ the measure induced from the volume form $\alpha_0 \wedge d\alpha_0^n$, which coincides with the usual Lebesgue measure.
The convolution of measurable functions $f$ and $g$ is given by
\begin{align}\label{eqn:convolution}
f * g\, (x) = \int_{\H^n} f(y)\, g(y^{-1}\cdot x)\, \nu_0(y)
\end{align}
whenever~(\ref{eqn:convolution}) is finite.
We will apply the next lemma in the subsequent section.
The proof follows from a straightforward adaptation of the Euclidean case (see for example \cite[Chapter~9]{wheeden:mai77}) to the Heisenberg group.
\begin{lem}\label{lem:convolution}
Let $f  \in L^p(\H^n)$ be compactly supported, and suppose that $K$ is a smooth compactly supported function on $\H^n$ such that $ \int_{\H^n}K\, \nu_0 = 1$.
For $\epsilon > 0$, the functions $f_\epsilon = f * K_\epsilon$ are smooth and compactly supported, where $ K_\epsilon ( x ) = \epsilon^{-(2n+1)}\, K ( x / \epsilon ) $.
If $1 \leq p < \infty $, then $\| f_\epsilon - f \|_p \to 0$ as $\epsilon \to 0$.
When $p = \infty$, $f_\epsilon$ converges to $f$ at every point of continuity of $f$.
In particular, if $f$ is continuous then $f_\epsilon$ converges to $f$ uniformly.
\qed
\end{lem}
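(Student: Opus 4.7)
The plan is to carry over the standard approximation-of-the-identity argument from Euclidean space to the Heisenberg group. The only adjustment needed is to account for the non-abelian group operation; the measure-theoretic input is unchanged because $\nu_0$ is Lebesgue measure, which is the bi-invariant Haar measure on $\H^n$ (left and right multiplication have unipotent Jacobian, hence preserve $\nu_0$).

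First I would record the elementary preparatory facts. The Euclidean dilation $x \mapsto x/\epsilon$ has Jacobian $\epsilon^{-(2n+1)}$, so a change of variables gives $\int_{\H^n} K_\epsilon\, \nu_0 = \int_{\H^n} K\, \nu_0 = 1$ and $\| K_\epsilon \|_1 = \| K \|_1$. Smoothness of $f_\epsilon$ follows by differentiating under the integral sign using smoothness of the group multiplication and compactness of $\supp K$. Compact support follows from the inclusion $\supp f_\epsilon \subseteq \supp f \cdot \supp K_\epsilon$, which is a continuous image of a compact set and hence compact.

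The heart of the argument is the convergence in $L^p$. Using the left invariance of $\nu_0$, the substitution $z = y^{-1}\cdot x$ in~(\ref{eqn:convolution}) and the normalization above yield
\[
f_\epsilon (x) - f(x) = \int_{\H^n} \bigl[ f(x \cdot z^{-1}) - f(x) \bigr] K_\epsilon (z)\, \nu_0 (z).
\]
The Euclidean dilation $z = \epsilon w$ converts $K_\epsilon (z)\, \nu_0(z)$ into $K(w)\, \nu_0(w)$, so that
\[
f_\epsilon (x) - f(x) = \int_{\H^n} \bigl[ f(x \cdot (\epsilon w)^{-1}) - f(x) \bigr] K(w)\, \nu_0(w).
\]
Minkowski's integral inequality then gives
\[
\| f_\epsilon - f \|_p \le \int_{\H^n} \| T_{(\epsilon w)^{-1}} f - f \|_p\, |K(w)|\, \nu_0 (w),
\]
where $T_u f (x) = f(x \cdot u)$. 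The key input is $L^p$-continuity of right translation on $\H^n$, i.e.\ $\| T_u f - f \|_p \to 0$ as $u \to e$. This is the step requiring the most care, although the argument is routine once one has bi-invariance of $\nu_0$: approximate $f$ in $L^p$ by a continuous compactly supported $g$, use uniform continuity of $g$ on a compact set, and run the three-epsilon argument. Since $(\epsilon w)^{-1} \to e$ for each $w$ as $\epsilon \to 0$, and the integrand is dominated by the integrable function $2 \| f \|_p\, |K(w)|$, dominated convergence completes the case $1 \le p < \infty$.

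For $p = \infty$, the same representation produces the pointwise bound
\[
| f_\epsilon (x) - f(x) | \le \int_{\H^n} | f(x \cdot (\epsilon w)^{-1}) - f(x) |\, |K(w)|\, \nu_0 (w).
\]
If $f$ is continuous at $x$, then for each $w \in \supp K$ the integrand tends to zero as $\epsilon \to 0$ and is dominated by $2 |f|_\infty \cdot |K(w)|$, so dominated convergence gives $f_\epsilon (x) \to f(x)$. When $f$ is globally continuous its compact support forces uniform continuity on $\H^n$, so the same bound holds simultaneously in $x$ for all small $\epsilon$, giving the uniform convergence claim.
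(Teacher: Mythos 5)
Your proof is correct and is essentially the argument the paper intends: the paper gives no details for this lemma, simply asserting that it follows from a straightforward adaptation of the Euclidean approximate-identity argument of \cite[Chapter~9]{wheeden:mai77} to the Heisenberg group, which is exactly the adaptation you carried out (bi-invariance of $\nu_0$, the substitution $z = y^{-1}\cdot x$, Minkowski's integral inequality, $L^p$-continuity of right translation, and dominated convergence). The only point deserving an explicit word is that $x \mapsto x\cdot u$ is not a Euclidean translation, so the smallness of $f(x\cdot u)-f(x)$ uniformly in $x$ is obtained by first using the compact supports to restrict $x$ to a fixed compact set, where the right translations by small $u$ do move points by a uniformly small Euclidean amount; your three-epsilon step already implicitly does this.
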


\subsection{Uniqueness of time-independent contact Hamiltonians}

The uniqueness of the topological contact isotopy of a topological contact Hamiltonian, Corollary~\ref{cor:unique-isotopy-and-conformal-factor}, enables us to complete the second step in the proof of Theorem~\ref{thm:main-theorem}, namely that the set $\nullautham \ma $ of time-independent null contact Hamiltonians consists only of the zero contact Hamiltonian.
We will need the following simple consequence of Darboux's theorem.

\begin{lem}\label{lem:vanishing-conformal-diff}
For any two points $ x $ and $ y \in M $, there exists a contact diffeomorphism $\psi \in \Diff_0 (M,\xi)$ mapping $x$ to $y$ whose conformal factor vanishes at the point $x$.
If $x, y \in U \subset M$, and $U$ is connected, then the diffeomorphism $\psi$ can be constructed with support contained inside $U$.
\end{lem}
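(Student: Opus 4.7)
The plan is to construct $\psi$ as the time-one map of a compactly supported smooth contact isotopy built in Darboux coordinates, exploiting the Heisenberg translations of section~3.2. Because the $R_\tau$ are strictly contact, a suitable cutoff will automatically have conformal factor vanishing at $x$, and the general case follows by chaining local constructions along a path from $x$ to $y$.

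For the local model, suppose first that $x$ and $y$ lie in a common Darboux chart $W$ on which $\alpha$ corresponds to the standard form $\alpha_0$ on an open subset of $\H^n$. Working in these Heisenberg coordinates, solve $R_\tau(x) = y$ for $\tau \in \H^n$, so that the trajectory $\gamma = \{ R_{s\tau}(x) \}_{s \in [0,1]} \subset W$ is a smooth arc from $x$ to $y$ generated by the basic Hamiltonian $F^\tau$ of~(\ref{eqn:heisenberg-translation}). Pick a smooth cutoff $\chi$ equal to $1$ on an open neighborhood $V$ of $\gamma$ and compactly supported in $W$, and set $H = \chi F^\tau$, extended by zero to a smooth compactly supported function on $M$. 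Since $X_H = X_{F^\tau}$ on $V$ and $\gamma \subset V$, uniqueness of ODE solutions and a continuation argument give $\phi^s_H(x) = R_{s\tau}(x)$ for all $s \in [0,1]$, so $\psi := \phi^1_H \in \Diff_0(M,\xi)$ sends $x$ to $y$. Because $R_{\alpha_0} = \partial/\partial x_{2n+1}$ and $F^\tau$ is independent of $x_{2n+1}$, one has $R_{\alpha_0}\cdot H = (R_{\alpha_0}\cdot \chi)\, F^\tau$, which vanishes along $\gamma$ since $\chi \equiv 1$ on $V$; then~(\ref{eqn:conformal-factor}) gives $h_t(x) = 0$ for all $t$, so in particular the conformal factor of $\psi$ vanishes at $x$.

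For the general case (and for $x, y$ in a connected open set $U$), join $x$ to $y$ by a path $\Gamma$ in $M$ (respectively in $U$), cover $\Gamma$ by finitely many Darboux charts (all contained in $U$ in the second case), and choose intermediate points $x = x_0, x_1, \ldots, x_k = y$ with each consecutive pair in a common chart. The local construction yields $\psi_j \in \Diff_0(M,\xi)$ supported in that chart, with $\psi_j(x_{j-1}) = x_j$ and conformal factor $g_j$ satisfying $g_j(x_{j-1}) = 0$. Iterating $\psi_j^*\alpha = e^{g_j}\alpha$ shows that the composition $\psi = \psi_k \circ \cdots \circ \psi_1$ has conformal factor at $x$ equal to $g_1(x_0) + g_2(x_1) + \cdots + g_k(x_{k-1}) = 0$. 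The principal technical point is that $\chi$ must equal $1$ on an entire neighborhood of the Heisenberg trajectory (not merely at its endpoints) in order to kill the error term $(R_{\alpha_0}\cdot \chi)\, F^\tau$ along the whole trajectory; this is easily arranged by thickening $\gamma$ to a tubular neighborhood compactly contained in $W$.
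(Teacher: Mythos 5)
Your construction is correct, but it takes a genuinely different route from the paper's. The paper first invokes the well-known transitivity of $\Diff_0(M,\xi)$ to get some $\phi$ with $\phi(x)=y$ and conformal factor $e^h$, then corrects the conformal factor by a Moser/Darboux-type argument: since $\alpha$ and $\alpha'=e^h\alpha$ are diffeomorphic near $x$ by a germ $\gamma$ fixing $x$ with $\gamma^*\alpha'=\alpha$, the composition $\psi=\phi\circ\gamma$ has conformal factor vanishing (in fact identically) near $x$, and the support condition is obtained by cutting off the generating Hamiltonians. You instead build $\psi$ from scratch out of cut-off Heisenberg right translations, which are strictly contact, so the vanishing of the conformal factor at $x$ comes for free from formula~(\ref{eqn:conformal-factor}) (the generator is basic on the region where the cutoff is $1$), and you obtain the general case by chaining finitely many such maps along a path, with the correct additivity $\sum_j g_j(x_{j-1})$ of conformal factors under composition. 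Your argument is more self-contained: it does not cite transitivity of the contactomorphism group nor a Darboux theorem for two contact forms, and it simultaneously produces the support control and a proof of transitivity; the paper's argument is shorter because it leans on those standard facts. One small point you should make explicit: since the Heisenberg trajectory $s\mapsto R_{s\tau}(x)$ is an affine segment in the Darboux coordinates, you should choose charts whose coordinate images are convex (or take consecutive points $x_{j-1},x_j$ sufficiently close) to guarantee that the segment, and a tubular neighborhood of it, actually lie inside the chart; with that said, the cutoff argument and the conclusion $h_1(x)=0$ go through exactly as you wrote.
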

\begin{proof}
It is well known that there exists a diffeomorphism $\phi \in \Diff_0 (M,\xi)$ such that $\phi(x) = y$ and $\phi^* \alpha = e^h \alpha$, where $h : M \to \R$ is a smooth function.
Now consider the contact form $\alpha' = e^{h} \alpha$.
By Darboux's theorem $\alpha$ and $\alpha'$ are diffeomorphic in a neighborhood of $x$, i.e.\ there exists a locally defined contact diffeomorphism $\gamma$ isotopic to the identity such that $\gamma(x) = x$, and $\gamma^* \alpha' = \alpha$.
After appropriately cutting off the smooth contact Hamiltonians that generate $\gamma$ and $\phi$, and extending $\gamma$ by the identity to all of $M$, the composition $\psi = \phi \circ \gamma$ has the desired properties.
\end{proof}

\begin{lem}\label{lem:smooth-step}
A smooth null contact Hamiltonian vanishes identically.
\end{lem}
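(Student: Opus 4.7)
The plan is to reduce Lemma~\ref{lem:smooth-step} to the uniqueness result already obtained for the topological contact isotopy of a topological contact Hamiltonian, namely Corollary~\ref{cor:unique-isotopy-and-conformal-factor}. The idea is to exhibit two topological contact dynamical systems that share $H$ as their topological contact Hamiltonian: the trivial one $(\Id, H, 0)$ coming from the hypothesis $H \in \nullham\ma$, and the smooth one $(\Phi_H, H, h)$ obtained by solving the contact ODE with initial condition $\phi^0_H = \id$.

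More precisely, first I would observe that $H \in \nullham\ma$ is by definition the statement that $(\Id, H, 0) \in \TCDS\ma$. Because $H$ is assumed smooth, standard ODE theory produces a smooth contact isotopy $\Phi_H = \{\phi^t_H\}$ satisfying $\frac{d}{dt}\phi^t_H = X_{H_t} \circ \phi^t_H$, together with a smooth conformal factor $h$ determined by~(\ref{eqn:conformal-factor}). Consequently $(\Phi_H, H, h) \in \CDS\ma \subset \TCDS\ma$ is a second topological contact dynamical system sharing $H$ as its topological contact Hamiltonian.

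The key step is then to invoke Corollary~\ref{cor:unique-isotopy-and-conformal-factor}, which asserts that two topological contact dynamical systems with a common topological contact Hamiltonian must have the same topological contact isotopy and the same topological conformal factor. Comparing $(\Id, H, 0)$ with $(\Phi_H, H, h)$ therefore forces $\phi^t_H = \id$ and $h_t = 0$ for every $t \in [0,1]$. Differentiating the relation $\phi^t_H = \id$ in $t$ at each point of $M$ yields $X_{H_t} \equiv 0$, and the defining identity $H_t = \iota(X_{H_t})\alpha$ from~(\ref{eqn:contact-ham}) then gives $H \equiv 0$.

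I do not expect any real obstacle in this particular lemma; it is essentially a transcription of Corollary~\ref{cor:unique-isotopy-and-conformal-factor} into the language of Hamiltonians via smooth contact rigidity. The substantive difficulty in Theorem~\ref{thm:main-theorem} lies instead in removing the smoothness hypothesis on $H$, which is precisely the purpose of the preceding subsections on null contact Hamiltonians and on smoothing via convolutions on the Heisenberg group: those constructions are designed to associate to a general null contact Hamiltonian a family of smooth null contact Hamiltonians to which Lemma~\ref{lem:smooth-step} can be applied, thereby forcing the original Hamiltonian to vanish.
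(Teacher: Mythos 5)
Your proof is correct and follows essentially the same route as the paper: generate the smooth system $(\Phi_H, H, h)$ from the smooth Hamiltonian $H$, compare it with $(\Id, H, 0)$, and invoke Corollary~\ref{cor:unique-isotopy-and-conformal-factor} to conclude $\Phi_H = \Id$ and hence $H = 0$. The only difference is that you spell out the final step (differentiating $\phi^t_H = \id$ to get $X_{H_t} = 0$ and then $H_t = \iota(X_{H_t})\alpha = 0$), which the paper leaves implicit.
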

\begin{proof}
Let $H$ be a smooth null contact Hamiltonian, and denote by $(\Phi_H, H, h)$ the smooth contact dynamical system generated by $H$ as in section~\ref{sec:smooth-contact-dynamics}.
Since $H$ is a null contact Hamiltonian $(\Id, H, 0)$ is a topological contact dynamical system.
By Corollary~\ref{cor:unique-isotopy-and-conformal-factor}, $\Phi_H = \Id$, and thus $H = 0$.
\end{proof}

\begin{lem}\label{lem:autonomous-step}
An autonomous null contact Hamiltonian vanishes identically.
\end{lem}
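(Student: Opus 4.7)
I would argue by contradiction: suppose $H \in \nullautham\ma$ satisfies $H(p_0) \neq 0$ for some $p_0 \in M$. The strategy, following the $L^1$-continuity heuristic alluded to in the introduction, is to convolve $H$ against a smooth mollifier supported near the identity of the Heisenberg group, use the closure and symmetry properties of $\nullautham\ma$ from Lemma~\ref{lem:null-hamiltonians} to conclude that the result is still a null contact Hamiltonian, and then apply Lemma~\ref{lem:smooth-step} to force it to vanish, contradicting $H(p_0)\neq 0$.

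The first step is to translate $H$ by Heisenberg right translations. Fix a Darboux chart centered at $p_0$ in which $\alpha$ becomes $\alpha_0$, and identify $p_0$ with $0\in\H^n$. For each $\tau$ in a small neighborhood of $0\in\H^n$, right translation $R_\tau$ is strictly contact with basic generator $F^\tau$ from \eqref{eqn:heisenberg-translation}. Cutting off $F^\tau$ outside the chart produces a contact isotopy of $M$ whose time-one map $\psi_\tau$ agrees with $R_\tau$ on a smaller neighborhood $V$ of $p_0$, depends smoothly on $\tau$, and has conformal factor $g_\tau$ that is smooth in $(\tau,x)$ and vanishes on $V$. Lemma~\ref{lem:null-hamiltonians} then gives $H^{\psi_\tau}:=e^{-g_\tau}(H\circ\psi_\tau)\in\nullautham\ma$. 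For a smooth compactly supported kernel $K$ on $\H^n$ with $\int_{\H^n}K\,\nu_0=1$, the average
\[
\widetilde H_K := \int_{\H^n} K(\tau)\, H^{\psi_\tau}\, \nu_0(\tau)
\]
lies in $\nullautham\ma$ by Riemann-sum approximation in $C^0(M)$ together with the fact that $\nullautham\ma$ is a closed $\R$-subspace of $C^0(M)$. On $V$, $\widetilde H_K$ coincides (after a change of variables) with the Heisenberg convolution $H*K$ and is therefore smooth there. Choosing $K=K_\epsilon$ a mollifier as in Lemma~\ref{lem:convolution}, one has $\widetilde H_{K_\epsilon}(p_0)\to H(p_0)\neq 0$ as $\epsilon\to 0$.

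The main obstacle is that $\widetilde H_{K_\epsilon}$ is only smooth in the region $V$ where $\psi_\tau$ acts as a translation; outside the chart it coincides with $H$, so global smoothness is not immediate, and Lemma~\ref{lem:smooth-step} does not apply directly. To overcome this I would first reduce to the case that the relevant null Hamiltonian has compact support in a Darboux chart — so that the smoothing inherits compact support in a slightly larger set and is globally smooth — using Lemma~\ref{lem:vanishing-conformal-diff} together with the vector-space and automorphism-closure properties of $\nullautham\ma$ to replace $H$ with a compactly supported element of $\nullautham\ma$ whose value at $p_0$ is still close to $H(p_0)$ (e.g.\ as a finite $\R$-combination of $H$ and of pullbacks $e^{-g}(H\circ\phi)$ that cancel off the complement of the chart, obtained by suitable cut-offs of basic Hamiltonians). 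Once this localization is achieved, the Heisenberg convolution produces a globally smooth null contact Hamiltonian which is forced to vanish identically by Lemma~\ref{lem:smooth-step}, and passage to the limit $\epsilon\to 0$ contradicts $H(p_0)\neq 0$. The engineering of the localization — preserving the value at $p_0$ while making all cut-offs mesh with the strict contact structure in a smooth way — is the most delicate technical point.
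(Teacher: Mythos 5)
Your overall mechanism is the same as the paper's: translate by cut-off Heisenberg right translations inside a Darboux chart, average against a mollifier, use the closedness, vector-space and automorphism-invariance of $\nullautham\ma$ from Lemma~\ref{lem:null-hamiltonians} to stay inside the null Hamiltonians, and contradict Lemma~\ref{lem:smooth-step}. You also correctly diagnose the obstacle, namely that the smoothed function is only smooth where the cut-off maps act as genuine translations, so one must first produce a null Hamiltonian compactly supported in the chart. The gap is that this localization step --- which you yourself flag as the delicate point --- is not actually carried out, and in the form you propose it would fail. Starting from the hypothesis $H(p_0)\neq 0$, the only combinations that cancel exactly off the chart are of the form $H-e^{-g}(H\circ\phi)$ (or finite sums of such) with $\phi$ supported in the chart; such a difference need not be nonzero at all (e.g.\ whenever $e^{-g}(H\circ\phi)=H$, in particular when $H$ is constant and $\phi$ is close to strictly contact), and there is no mechanism to keep its value at $p_0$ close to $H(p_0)$: Lemma~\ref{lem:vanishing-conformal-diff} gives a conformal factor \emph{vanishing} at a point, whereas $F(p_0)\approx H(p_0)$ would require $e^{-g(p_0)}H(\phi(p_0))$ to be small, which you cannot arrange in general. (Preserving the value at $p_0$ is also unnecessary: once you have a smooth, not identically zero element of $\nullautham\ma$, Lemma~\ref{lem:smooth-step} already yields the contradiction, with no passage to the limit $\epsilon\to 0$.) In particular the case of a nonzero \emph{constant} $H$ is left untreated by your scheme.

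The paper repairs exactly this point by changing the contradiction hypothesis: one first shows $H$ is locally constant. If $H(x)\neq H(y)$ for two points of a Darboux chart $U$, choose $\phi\in\Diff_0(M,\xi)$ supported in $U$ with $\phi(x)=y$ and conformal factor $g$ satisfying $g(x)=0$ (Lemma~\ref{lem:vanishing-conformal-diff}); then $F=H-e^{-g}(H\circ\phi)$ lies in $\nullautham\ma$, is compactly supported in $U$ because the two terms cancel identically off $\supp(\phi)$, and is nonzero since $F(x)=H(x)-H(y)\neq 0$. It is this $F$, not $H$, that one convolves as in Lemma~\ref{lem:convolution} and approximates by Riemann sums of pullbacks by cut-off translations (your averaging of $H^{\psi_\tau}$ is the analogous device), producing a smooth, nonzero, compactly supported null contact Hamiltonian and hence a contradiction with Lemma~\ref{lem:smooth-step}. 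Local constancy plus connectedness then makes $H$ globally constant, hence smooth, and Lemma~\ref{lem:smooth-step} applied once more gives $H=0$, which disposes of the constant case your argument misses. If you restructure your proof along these lines, the rest of your outline goes through.
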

\begin{proof}
We argue by contradiction that a time-independent null contact Hamiltonian must be locally constant.
Since $M$ is connected, it is then globally constant, and by Lemma~\ref{lem:smooth-step} it must be identically zero.

Suppose $H \in \nullautham \ma $ and there exist points $x \neq y \in U$ such that $H(x) \neq H(y)$, where $U \subset M$ is a Darboux neighborhood with local coordinates $x = (x', x_{2n+1}) \in \H^n$ such that
	\[ \left. \alpha \right|_U = dx_{2n+1} - \frac{1}{2} \sum_{j=1}^n \left( x_{2j-1}dx_{2j} -  x_{2j} dx_{2j-1} \right). \]
Let $\phi \in \Diff_0(M,\xi)$ be such that $\phi(x) = y$, and with conformal factor $g$ such that $g(x) = 0$ as in Lemma~\ref{lem:vanishing-conformal-diff}.
We may further assume that $\phi$ is compactly supported within $U$.
By Lemma~\ref{lem:null-hamiltonians} the function $F = H - e^{-g} (H \circ \phi)$ is a null contact Hamiltonian, which is both compactly supported in $U$ and non-zero.
Consider the set
	\[ \L = \overline{\linspan} \{ F^\phi \mid \phi \in \Diff(M,\xi) \ {\rm and} \ \supp(\phi) \subset U \} \subset C^0(M), \]
where $\phi^*\alpha = e^g \alpha$ and $F^\phi = e^{-g}( F \circ \phi$).
By Lemma~\ref{lem:null-hamiltonians} $\L \subset \nullautham \ma $ is a $C^0$-closed linear subspace.

For $K: \H^n \to \R$ as in Lemma~\ref{lem:convolution} and sufficiently small $\epsilon > 0 $, the convolution
\begin{align*}
F_\epsilon (x) = F * K_\epsilon (x) =  \int_{\H^n} F \left( y \right) K_\epsilon (y^{-1} \cdot x)\, \nu_{0}(y),
\end{align*}
is compactly supported in $U$, where the variables $y = (y',y_{2n+1})$ denote Heisenberg coordinates on $\H^n$ as before.
By Lemma~\ref{lem:convolution}, $F_\epsilon$ is smooth, and $C^0$-converges to $F$ as $\epsilon \to 0$; thus $F_\epsilon$ is non-vanishing for all sufficiently small $\epsilon > 0$.

The function $F_\epsilon$ can be $C^0$-approximated by a Riemann sum of the form (cf.\ \cite[Section~3]{buhovsky:ugh13})
	\[ \sum_{j=1}^N c_j\cdot (R_{\tau_j})^*F \]
with $| \tau_j | < \epsilon$, which a priori is not an element of $\L$, since the right translations $R_{\tau_j}$ are not compactly supported. 
Let $G_j = \rho \cdot F^{\tau_j}$, where $F^{\tau_j}$ satisfies~(\ref{eqn:heisenberg-translation}), and $\rho$ is a smooth cut-off function supported in $U$ that equals $1$ on the set
	\[ W = \{ x \cdot v^{-1} \mid x \in \supp(F), \| v \| < \delta \} \subset \overline W \subset U \]
for $ \delta > 0 $ sufficiently small.
Then $\supp(\phi_{G_j}^t) \subset U$ for all $t \in [0,1]$, and if we further impose that $\delta < \epsilon$, then $\phi_{G_j}^t$ coincides with  $R_{t \tau_j}$ on the support of $F$.
Thus
	\[ \sum_{j=1}^N c_j\cdot (R_{\tau_j})^*F = \sum_{j = 1}^{N} c_j \cdot (\phi^1_{G_j})^*F. \]
The right hand side above is an element of $\L \subset \nullautham \ma$, and this implies $F_\epsilon$ is a smooth non-vanishing null contact Hamiltonian.
This contradicts Lemma~\ref{lem:smooth-step}.
\end{proof}

\subsection{Time-dependent null Hamiltonians}
The last step in proving Theorem~\ref{thm:main-theorem} is to show that for almost every $t \in [0,1]$, the restriction $H_t$ of a null contact Hamiltonian is an element of $\nullautham \ma $.
In the present case of contact Hamiltonians on contact manifolds, the proof requires a minor modification to the proof concerning Hamiltonians of symplectic manifolds from~\cite{buhovsky:ugh13}.

\begin{lem}\label{lem:non-autonomous-step}
Suppose that $H \in \nullham \ma$.
For almost every $t \in [0,1]$, the restricted Hamiltonian $H_t$ satisfies $H_t \in \nullautham \ma$.
\end{lem}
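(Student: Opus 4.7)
The strategy is, at a Lebesgue point $t_0$ of $H$, to realize the constant-in-time function $H_{t_0}$ as an $L^1$-limit of null Hamiltonians obtained from $H$ by a single reparametrization followed by $N$-fold self-composition, and then invoke closedness of $\nullham \ma$.

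Since $C^0(M)$ is separable, the vector-valued Lebesgue differentiation theorem gives, for almost every $t_0 \in [0,1)$,
\[
\lim_{T \to 0^+} \frac{1}{T} \int_{t_0}^{t_0+T} \|H_s - H_{t_0}\|_{C^0(M)} \, ds = 0.
\]
Fix such a $t_0$. For each integer $N \ge 1/(1-t_0)$, the smooth map $\zeta_N(u) = t_0 + u/N$ sends $[0,1]$ into $[0,1]$, and by Lemma~\ref{lem:null-hamiltonians} together with equation~(\ref{eqn:rep}),
\[
F_N(u, x) := H^{\zeta_N}(u, x) = \frac{1}{N}\, H(t_0 + u/N, x)
\]
lies in $\nullham \ma$. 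Here the apparent constraint $\zeta_N(0) = 0$ is unnecessary because the reparametrized flow $\{\phi_H^{\zeta_N(u)}\}$ equals the constant isotopy at the identity regardless of the value $\zeta_N(0) = t_0$.

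Next, $\nullham \ma$ is closed under multiplication by positive integers: for any $F \in \nullham \ma$, the composition formula in Theorem~\ref{thm:group-structure} simplifies (since the isotopy is the identity and the conformal factor vanishes) to $F \# F = 2F$, and by induction $NF \in \nullham \ma$ for every positive integer $N$. Applying this with $F = F_N$ produces
\[
K_N(u, x) := N \cdot F_N(u, x) = H(t_0 + u/N, x) \in \nullham \ma,
\]
and the change of variable $s = t_0 + u/N$ yields
\[
\|K_N - H_{t_0}\|_{L^1([0,1], C^0(M))} = N \int_{t_0}^{t_0 + 1/N} \|H_s - H_{t_0}\|_{C^0(M)}\, ds,
\]
which tends to zero as $N \to \infty$ by the Lebesgue point property. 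Since $\nullham \ma$ is $L^1$-closed, the constant-in-time function $H_{t_0}$ lies in $\nullham \ma$, hence in $\nullautham \ma$.

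The delicate step to verify is the application of Lemma~\ref{lem:null-hamiltonians} in the two nonstandard situations above: a reparametrization $\zeta_N$ with $\zeta_N(0) \neq 0$, and iterated composition yielding integer scalar multiplication. Both reduce to straightforward chain-rule and composition identities for smooth approximating sequences of $H$, checked in the contact norm and in the uniform norm on conformal factors. Given these, the argument is a direct adaptation of the symplectic case of~\cite{buhovsky:ugh13}.
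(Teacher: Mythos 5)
Your proposal is correct and follows essentially the same route as the paper's proof: reparametrize by $\zeta(s)=t_0+s/j$ (legitimate despite $\zeta(0)\neq 0$ precisely because the isotopy is the identity), recover $H(t_0+s/j,\cdot)$ by iterated addition in $\nullham\ma$, and conclude via the Lebesgue differentiation theorem and closedness of $\nullham\ma$, the only cosmetic difference being that you estimate directly in the $L^1([0,1],C^0(M))$ norm while the paper bounds the contact norm by three times that quantity.
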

\begin{proof}
Fix a value of $t \in [0,1)$, and define 
	\[ F_j(s, x) = \frac{1}{j} \cdot H \left( t + \frac{s}{j}, x \right) \qquad 0 \leq s \leq 1 \]
for $j$ sufficiently large. 
By Lemma~(\ref{lem:null-hamiltonians}), $F_j$ is a null contact Hamiltonian, and thus $G_j = j \cdot F_j$ is as well.
For the the sequence of null Hamiltonians $G_j$, we have
\begin{align}
\| G_j - H_t \| & < 3 \int_0^1 \max_{x \in M} | G_j(s,x) - H_t(x) | \, ds \label{eqn:integral-inequality} \\  
& = 3 j \int_t^{t + \frac{1}{j}} \max_{x \in M} | H(u,x) - H_t(x) | \, du, \label{eqn:integral-equality}
\end{align}
where~(\ref{eqn:integral-inequality}) follows from~\cite[Lemma~2.4]{ms:tcd1}, which observes that for a function $V:M \to \R$
	\[ \max_{x\in M}(V(x)) - \min_{x\in M}(V(x)) + \frac{1}{\int_M \alpha \wedge (d\alpha)^n} \left| \int_M V \alpha \wedge (d\alpha)^n \right| < 3 \max_{x\in M}|V(x)|,\] 
and~(\ref{eqn:integral-equality}) results from the change of variables $u = t + \frac{s}{j}$.
The Lebesgue differentiation theorem implies that for almost every $t \in [0,1]$
	\[ \lim_{h \to 0^+} \frac{1}{h}\int_t ^{t+h}\max_{x \in M} | H(s,x) - H_t(x) | \, ds = 0, \]
and thus $\| G_j - H_t \| \to 0$ as $j \to \infty$.
Therefore $H_t : M \to \R$ is a null contact Hamiltonian, because $\nullham \ma$ is closed with respect to the norm given by equation~(\ref{eqn:contact-norm}).
Since $H_t$ is time-independent by definition, it follows that $H_t \in \nullautham \ma$.
\end{proof}

\begin{proof}[Proof of Theorem~\ref{thm:main-theorem}]
Suppose that $H \in \nullham \ma$.
Then for almost all $t \in [0,1]$, $H_t \in \nullautham \ma$ by Lemma~\ref{lem:non-autonomous-step}.
By Lemma~\ref{lem:autonomous-step}, $H_t = 0$.
Thus $H = 0$ as an element of the space $ L^1([0,1], C^0(M))$. 
\end{proof}

\section{Local Uniqueness Results}\label{sec:local-uniqueness}

Recall that we proved the local uniqueness of the conformal factor of a topological automorphism of the contact structure in~\cite[Proposition~11.4]{ms:tcd1}.
This is the precise statement.

\begin{pro}[Local uniqueness of topological conformal factor] \label{pro:local-unique-topo-conformal-factor}
Let $U \subset M$ be an open subset of a contact manifold $(M,\xi)$ with contact form $\alpha$ such that $\ker \alpha = \xi$.
Suppose that $\phi_i$ and $\psi_i \in \Diff (M)$ are two sequences of diffeomorphisms such that $\phi_i^* \alpha = e^{h_i} \alpha$ and $\psi_i^* \alpha = e^{g_i} \alpha$ on $U$ where $h_i$ and $g_i$ are smooth functions on $U$.

Suppose that the sequences $\phi_i^{-1} \circ \psi_i$ and $\psi_i^{-1} \circ \phi_i$ converge to the identity uniformly on compact subsets of $U$, and that $h_i$ and $g_i$ converge uniformly on compact subsets of $U$ to continuous functions $h$ and $g$, respectively.
Then $h = g$ on $U$.
\qed
\end{pro}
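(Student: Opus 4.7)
The plan is to reduce the local statement to the global uniqueness of the topological conformal factor (Theorem~\ref{thm:uniqueness-conformal-factor}) via a Darboux-chart cut-off of an interpolating contact Hamiltonian. First, I would compute the conformal factor of the composition $\chi_i := \phi_i^{-1}\circ\psi_i$. On any compact $K \subset U$ and for $i$ large enough that $\chi_i(K) \subset U$, a direct calculation from $\phi_i^*\alpha = e^{h_i}\alpha$ and $\psi_i^*\alpha = e^{g_i}\alpha$ gives
\[
\chi_i^*\alpha \;=\; \psi_i^*\bigl((\phi_i^{-1})^*\alpha\bigr) \;=\; e^{g_i - h_i\circ\chi_i}\,\alpha
\]
on $K$. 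Setting $k_i = g_i - h_i\circ\chi_i$ and $k = g - h$, the hypotheses imply $k_i \to k$ uniformly on compact subsets of $U$. Thus the claim reduces to showing that whenever $\chi_i$ is a sequence of smooth contact diffeomorphisms on $U$ with $\chi_i \to \id$ and $k_i \to k$ uniformly on compact subsets, then $k \equiv 0$ on $U$.

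Next, I would fix an arbitrary point $x_0 \in U$, choose a Darboux chart $V$ around $x_0$ with $\bar V \subset U$, and a smaller open set $W$ with $\bar W \subset V$. Since $\chi_i\to\id$ uniformly on $\bar V$, for large $i$ the restriction $\chi_i|_V$ is a smooth contact diffeomorphism $C^0$-close to the identity, so it can be realized as the time-$1$ map of a smooth contact isotopy $\phi_i^t$ on $V$ generated by a smooth contact Hamiltonian $H_i^t$ on $V$. Let $\rho:M\to\R$ be a smooth cut-off equal to $1$ on a neighborhood of $\bar W$ and supported in $V$. Extending by zero, the global smooth contact Hamiltonian $\tilde H_i^t := \rho\cdot H_i^t$ generates a global smooth contact isotopy $\tilde\Phi_i = \{\tilde\phi_i^t\}$ of $\ma$ with smooth conformal factor $\tilde\eta_i^t$. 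Since $\rho\equiv 1$ near $\bar W$, the contact vector field of $\tilde H_i^t$ coincides with that of $H_i^t$ in that region, so that whenever the orbit through $W$ remains inside $\{\rho = 1\}$, one has $\tilde\phi_i^1 = \chi_i$ and $\tilde\eta_i^1 = k_i$ on $W$.

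The remaining ingredient is to choose the interpolation so that $\tilde\Phi_i \to \Id$ and $\tilde\eta_i^t \to \tilde\eta$ uniformly on $M$, where $\tilde\eta|_W = k|_W$. Granting this, the limit triple $(\Id, \tilde H, \tilde\eta)$ is a topological contact dynamical system of $\ma$ with constant identity isotopy, so Theorem~\ref{thm:uniqueness-conformal-factor} forces $\tilde\eta \equiv 0$, and in particular $k(x_0) = 0$. Since $x_0 \in U$ was arbitrary, $h = g$ on $U$.

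The main obstacle is the Darboux-chart construction itself: producing the interpolating isotopy $\phi_i^t$ with a generating Hamiltonian $H_i^t$ controlled in $C^0$ by the $C^0$-displacement of $\chi_i$ from the identity on $\bar V$, in such a way that the cut-off Hamiltonians $\tilde H_i^t$ and their conformal factors $\tilde\eta_i^t$ converge uniformly on all of $M$. This forces one to exploit the explicit local model of the standard contact form (as in the Heisenberg-group conventions of section~\ref{sec:proof-main-theorem}) and establish quantitative a priori estimates relating a contact diffeomorphism close to the identity in a Darboux chart to the size of its generating Hamiltonian and of its conformal factor; once these estimates are in place, the reduction to the global uniqueness theorem is automatic.
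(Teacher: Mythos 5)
Your opening reduction is fine: with $\chi_i=\phi_i^{-1}\circ\psi_i$ one indeed gets $\chi_i^*\alpha=e^{k_i}\alpha$ on compact subsets of $U$ for large $i$, with $k_i=g_i-h_i\circ\chi_i\to g-h$ uniformly on compact sets. The gap is everything after that. To invoke Theorem~\ref{thm:uniqueness-conformal-factor} you must exhibit a genuine topological contact dynamical system $(\Id,\tilde H,\tilde\eta)$, and by definition this requires three simultaneous convergences: the cut-off isotopies $\tilde\Phi_i$ must converge uniformly to the constant identity isotopy (at all times $t$, not just $t=1$), the conformal factors $\tilde\eta_i^t$ must converge uniformly, and the Hamiltonians $\tilde H_i$ must converge in the norm~(\ref{eqn:contact-norm}). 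Your data consist only of $C^0$-convergence of the maps $\chi_i$ and uniform convergence of their time-one conformal factors; nothing controls a generating Hamiltonian of an interpolation from $\id$ to $\chi_i$, nor its conformal factor at intermediate times. The ``quantitative a priori estimate'' you defer to --- bounding some generating contact Hamiltonian by the $C^0$-displacement of its time-one map --- is not a technicality but a statement far stronger than the proposition, and such estimates do not exist: Hofer-type norms are not $C^0$-continuous (already in the symplectic case the Calabi invariant shows that Hamiltonian diffeomorphisms supported in an arbitrarily small ball, hence uniformly arbitrarily close to the identity, have unbounded Hofer norm), and the paper itself cautions that convergence of any two members of the triple $(\Phi_{H_i},H_i,h_i)$ never implies convergence of the third. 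Indeed, if your interpolations existed with $\tilde\Phi_i\to\Id$ and $\tilde H_i$ convergent, Theorems~\ref{thm:main-theorem} and~\ref{thm:uniqueness-isotopy} would force $\tilde H_i\to 0$, so you would in effect be asserting $C^0$-continuity of the contact norm at the identity. There is also a secondary gap: realizing $\chi_i|_V$ as the time-one map of a contact isotopy in $V$ with any control is a $C^1$-small statement (graph/generating-function arguments), whereas you only have $C^0$-closeness.

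Note that the paper does not reprove this proposition but quotes it from \cite{ms:tcd1}, and the point is that no Hamiltonian machinery is needed: the conformal factor is detected by the measure $\nu$ induced by $\alpha\wedge(d\alpha)^n$. On $U$ one has $\phi_i^*(\alpha\wedge(d\alpha)^n)=e^{(n+1)h_i}\,\alpha\wedge(d\alpha)^n$, hence $\nu(\phi_i(A))=\int_A e^{(n+1)h_i}\,d\nu$ for Borel sets $A$ with compact closure in $U$, and similarly for $\psi_i$ and $g_i$. For a small closed ball $B\subset U$ write $\psi_i=\phi_i\circ\chi_i$, so that for large $i$, $\int_B e^{(n+1)g_i}\,d\nu=\nu(\psi_i(B))=\nu\bigl(\phi_i(\chi_i(B))\bigr)=\int_{\chi_i(B)}e^{(n+1)h_i}\,d\nu$. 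Since $\chi_i\to\id$ and $\chi_i^{-1}=\psi_i^{-1}\circ\phi_i\to\id$ uniformly near $B$ (this is exactly where both displayed hypotheses are used), the sets $\chi_i(B)$ are squeezed between concentric balls converging to $B$; passing to the limit with $h_i\to h$ and $g_i\to g$ uniformly near $B$ gives $\int_B e^{(n+1)g}\,d\nu=\int_B e^{(n+1)h}\,d\nu$ for all small balls $B$, and continuity (or Lebesgue differentiation) yields $g=h$ on $U$. This elementary measure-theoretic route, not a reduction to Theorem~\ref{thm:uniqueness-conformal-factor}, is the appropriate proof of the local statement.
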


We comment here that the convergence of the sequences of diffeomorphisms $\phi_i^{-1} \circ \psi_i$ and $\psi_i^{-1} \circ \phi_i$ to the identity does not imply even the pointwise convergence of the functions $h_i$ and $g_i$.
See \cite[Lemma~4.7]{ms:tcd2}.

Each time-$t$ map $\phi^t_H$ of a topological contact dynamical system $(\Phi_H, H, h)$ is a topological automorphism of $\xi$.
Therefore Proposition~\ref{pro:local-unique-topo-conformal-factor} with $U = M$ implies that the continuous function $h_t$ is uniquely determined by the homeomorphism $\phi^t_H$.

\begin{cor}{\cite[Corollary~6.10]{ms:tcd1}} \label{cor:unique-topo-conformal-factor-iso}
The topological conformal factor $h$ of a topological contact isotopy $\Phi$ is uniquely determined by the continuous isotopy $\Phi$ and the contact form $\alpha$.
That is, if $(\Phi, H, h)$ and $(\Phi, F, f)$ are two topological contact dynamical systems with the same topological contact isotopy, then $h = f$.
\qed
\end{cor}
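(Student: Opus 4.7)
The plan is to reduce the statement to the local uniqueness result for topological conformal factors of automorphisms, namely Proposition~\ref{pro:local-unique-topo-conformal-factor}, applied separately at each time $t$ with $U = M$. The point is that the notion of ``topological contact dynamical system'' packages, for each fixed $t$, two smooth approximating sequences for the same homeomorphism $\phi_t$, and the two sequences of conformal factors evaluated at $t$ then satisfy exactly the hypotheses of Proposition~\ref{pro:local-unique-topo-conformal-factor}.

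Concretely, suppose $(\Phi,H,h)$ and $(\Phi,F,f)$ are topological contact dynamical systems sharing the same isotopy $\Phi = \{\phi_t\}$. By definition of a topological contact dynamical system, the first triple comes with a sequence of smooth contact dynamical systems $(\Phi_i,H_i,h_i) = (\{\phi^t_i\},H_i,h_i)$ such that $\phi^t_i \to \phi_t$ uniformly in $t$ and $h_i \to h$ uniformly on $[0,1]\times M$; likewise the second triple yields $(\Psi_i,F_i,g_i) = (\{\psi^t_i\},F_i,g_i)$ with $\psi^t_i \to \phi_t$ uniformly and $g_i \to f$ uniformly. Fix $t\in[0,1]$. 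Restricted to this value of time, $\phi^t_i$ and $\psi^t_i$ are elements of $\Diff(M,\xi)$ satisfying $(\phi^t_i)^*\alpha = e^{h_i(t,\cdot)}\alpha$ and $(\psi^t_i)^*\alpha = e^{g_i(t,\cdot)}\alpha$, and the functions $h_i(t,\cdot)$ and $g_i(t,\cdot)$ converge uniformly on $M$ to the continuous functions $h_t$ and $f_t$, respectively.

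The next step is to verify that the compositions $(\phi^t_i)^{-1} \circ \psi^t_i$ and $(\psi^t_i)^{-1} \circ \phi^t_i$ both converge uniformly to $\id_M$. Since $M$ is compact, uniform convergence $\phi^t_i \to \phi_t$ to a homeomorphism implies uniform convergence $(\phi^t_i)^{-1} \to \phi_t^{-1}$, and composition is continuous with respect to uniform convergence on a compact space, so both composed sequences converge uniformly to $\phi_t^{-1}\circ \phi_t = \id_M$. All hypotheses of Proposition~\ref{pro:local-unique-topo-conformal-factor} are thus satisfied with $U = M$, and its conclusion yields $h_t = f_t$ on $M$. Since $t \in [0,1]$ was arbitrary, $h = f$ as functions on $[0,1]\times M$.

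There is no real obstacle to this argument once one has Proposition~\ref{pro:local-unique-topo-conformal-factor}; the only small point requiring care is the uniform convergence of inverses and compositions, which is standard on a compact manifold. The entire content of the corollary is therefore the observation that the time-$t$ data of a topological contact dynamical system fit into the framework of topological contact automorphisms, together with a direct appeal to the already established local uniqueness statement for automorphisms.
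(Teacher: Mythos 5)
Your argument is correct and is essentially the paper's own proof: the paper likewise observes that each time-$t$ map of a topological contact dynamical system is a topological automorphism of $\xi$ and then invokes Proposition~\ref{pro:local-unique-topo-conformal-factor} with $U = M$ to conclude $h_t = f_t$ for every $t$. Your additional verification that inverses and compositions of the approximating diffeomorphisms converge uniformly on the compact manifold $M$ is exactly the (standard) point implicitly used there.
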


The next result is our local reformulation of Theorem~\ref{thm:uniqueness-isotopy}.
\begin{thm}\label{thm:local-uniqueness-isotopy}
Suppose that $ (\Phi_H, H, h) $ is a topological contact dynamical system of $ \ma $, and let $ U \subset M $ be open.
If for almost every $ t \in [0,1] $ the restriction of the Hamiltonian satisfies $ \left. H_t \right|_U = 0 $, then $ \phi^t_H (x) = x $ for every $t \in [0,1]$ and $ x \in U $.
\end{thm}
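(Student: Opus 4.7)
The plan is to reduce the theorem to the global uniqueness result, Theorem~\ref{thm:uniqueness-isotopy}, via a cutoff construction. First I would reduce to the following local-in-time claim: for every $x_0 \in U$ there exist an open neighborhood $V \ni x_0$ with $\overline V \subset U$ and $\epsilon > 0$ such that $\phi_H^t(x) = x$ for all $x \in V$ and $t \in [0, \epsilon]$. The full conclusion then follows by applying this claim at each intermediate time $t_0$ to the restarted isotopy $\tilde\psi_s := \phi_H^{t_0+s} \circ (\phi_H^{t_0})^{-1}$, whose generating Hamiltonian $(s,x) \mapsto H(t_0+s,x)$ still vanishes on $U$ almost everywhere, combined with finite covers of compact subsets of $U$ and a connectedness argument on $[0,1]$.

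For the local claim, fix $x_0 \in U$ and use continuity of $\Phi_H$ to pick $V$ and $\epsilon$ so that the compact set $W = \bigcup_{t \in [0,\epsilon]} \phi_H^t(\overline V)$ is contained in $U$. Choose a smooth cutoff $\rho \colon M \to [0,1]$ with $\rho \equiv 1$ on a neighborhood $W' \supset W$ and $\supp \rho \subset U$. Starting from a smooth approximating sequence $(\Phi_{H_i}, H_i, h_i) \to (\Phi_H, H, h)$, consider the modified smooth Hamiltonians
\[ \tilde H_i(t, x) := (1 - \rho(x))\, H_i(t, x) + \rho(x)\, c_i(t), \qquad c_i(t) := H_i(t, x_0). \]
On $W'$ the Hamiltonian $\tilde H_i$ depends only on time, so as long as the $\tilde H_i$-trajectory remains in $W'$, the flow $\phi_{\tilde H_i}^t$ coincides on $V$ with the Reeb reparametrization $R_\alpha^{\int_0^t c_i(s)\,ds}|_V$. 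Outside $\supp \rho \subset U$ one has $\tilde H_i = H_i$.

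From the inequality $\| V \| \leq 3 \int_0^1 \max_M |V|\,dt$ (a direct consequence of \cite[Lemma~2.4]{ms:tcd1}, also used in Lemma~\ref{lem:non-autonomous-step}), together with the hypothesis $H|_U \equiv 0$ and the triangle-inequality bound $|H_i(t,x) - c_i(t)| \leq 2\, \max_M |H_i(t,\cdot) - H(t,\cdot)|$ for $x \in U$, one deduces that $c_i \to 0$ in $L^1([0,1])$ and that $\tilde H_i - H_i \to 0$ in the contact norm. Invoking the continuity properties of the smooth flow and conformal factor with respect to the contact norm developed in \cite{ms:tcd1}, the sequence $(\Phi_{\tilde H_i}, \tilde H_i, \tilde h_i)$ is Cauchy in the contact metric $d_\alpha$ and converges to the same topological limit $(\Phi_H, H, h)$. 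Since $c_i \to 0$ in $L^1$, one also has $\phi_{\tilde H_i}^t|_V = R_\alpha^{\int_0^t c_i(s)\,ds}|_V \to \mathrm{id}|_V$ uniformly in $t \in [0,\epsilon]$, so by uniqueness of the $C^0$-limit of $\Phi_{\tilde H_i}$ one obtains $\phi_H^t|_V = \mathrm{id}|_V$ for $t \in [0,\epsilon]$.

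The main obstacle is verifying that $(\Phi_{\tilde H_i}, \tilde H_i, \tilde h_i)$ forms a Cauchy sequence in the \emph{full} contact metric, i.e.\ that the isotopies and conformal factors converge in tandem with the Hamiltonians. This hinges on the continuity of the smooth contact flow/conformal-factor assignment with respect to the contact norm, as developed in \cite{ms:tcd1}; the fact that the perturbation $\tilde H_i - H_i = \rho(c_i - H_i)$ is supported in $\supp \rho \subset U$ provides the requisite localization to push through these estimates.
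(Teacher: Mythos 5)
Your reduction to a local-in-time claim, the cutoff $\tilde H_i = (1-\rho)H_i + \rho\, c_i$, and the estimates showing $c_i \to 0$ in $L^1$ and $\|\tilde H_i - H_i\| \to 0$ are all fine. The proof breaks down, however, at the step you yourself flag as the ``main obstacle'': the assertion that $\|\tilde H_i - H_i\| \to 0$ forces $(\Phi_{\tilde H_i}, \tilde H_i, \tilde h_i)$ to be $d_\alpha$-Cauchy with the same limit $(\Phi_H, H, h)$. There is no continuity of the assignments $H \mapsto \Phi_H$ or $H \mapsto h$ with respect to the contact norm available in \cite{ms:tcd1} to cite here; on the contrary, the present paper explicitly cautions (with counterexamples in \cite[section~8]{ms:tcd1}) that convergence of any two of the three components of a contact dynamical system does not imply convergence of the third, and the group $\TCDS\ma$ is defined by demanding simultaneous convergence precisely because such continuity fails. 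Concretely, the Hamiltonian generating $\Phi_{\tilde H_i} \circ \Phi_{H_i}^{-1}$ is $\tilde H_i \# \overline{H_i}$, whose contact norm involves the factor $e^{\tilde h_i - h_i}$, and any energy-capacity control of $d_{C^0}(\Phi_{\tilde H_i}, \Phi_{H_i})$ carries the weight $e^{-|\tilde h_i|}$ (compare Theorem~\ref{thm:local-contact-energy-capacity-inequality}); but $\tilde h_i$ is an integral of $R_\alpha.\tilde H_i$ along the flow, and $R_\alpha.\tilde H_i$ contains $d\rho$ and $dH_i$ terms that are not controlled by the merely uniform convergence $H_i \to H$. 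So the crux of the theorem is hidden inside an unproven ``continuity'' black box, and the argument as written does not close.

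For comparison, the paper proceeds by contradiction with a displacement argument rather than a cutoff: if some $x \in U$ moves, a small ball $\overline V \subset U$ is displaced by $\phi_H^{t_0}$ with its whole trajectory inside $U$; this persists for the approximating smooth systems, and a \emph{local} contact energy-capacity inequality (Theorem~\ref{thm:local-contact-energy-capacity-inequality}, proved by cutting off the lifted Hamiltonian in the symplectization and invoking the Hofer displacement energy) gives $0 < Ce^{-|h_i|_{\overline V}} \leq \|H_i|_{U}\|$. Here the conformal factors cause no trouble because $h_i \to h$ uniformly, so $|h_i|_{\overline V}$ stays bounded, while $\|H_i|_U\| \to \|H|_U\| = 0$, a contradiction. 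If you want to salvage your scheme, you would need an estimate of exactly this energy-capacity type anyway, at which point the paper's direct displacement argument is both shorter and avoids the cutoff of the Hamiltonians altogether.
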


In order to prove Theorem~\ref{thm:local-uniqueness-isotopy}, we will need a local version of the contact energy-capacity inequality from \cite[Theorem~1.1]{ms:tcd1}.
\begin{thm}\label{thm:local-contact-energy-capacity-inequality}
Let $(M, \xi)$ be a contact manifold with a contact form $\alpha$ such that $\ker \alpha = \xi$.
Suppose that $U \subset M$ is open and that the time-one map $\phi_H^1 \in \Diff_0(M, \xi)$ of a smooth contact Hamiltonian $H:[0,1]\times M \to \R$ displaces the closure $\overline V$ of an open set $V$ such that $V \subset \overline V \subset U$.
Then there exists a constant $C>0$ independent of the contact isotopy $\Phi_H$, its conformal factor $h:[0,1]\times M \to \R$ given by $(\phi^t_H)^*\alpha = e^{h_t}\alpha$, and its contact Hamiltonian $H$ such that if for all $0 \leq t \leq 1$, $\phi^t_H(\overline V ) \subset U$ then
	\[ 0 < Ce^{-|h|_{\overline V}} \leq \| H_{| {\overline U}}\| \]
where $|h|_{\overline V} = \max \{ |h_t(x)| \mid (t,x) \in [0,1]\times \overline V \}$.
\end{thm}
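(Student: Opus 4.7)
The plan is to reduce the local inequality to the global contact energy-capacity inequality \cite[Theorem~1.1]{ms:tcd1} via a cutoff construction. The key observation is that, since $\phi^t_H(\overline V) \subset U$ for every $t \in [0,1]$, modifying $H$ outside $U$ does not affect its flow on $\overline V$, and by the formula~(\ref{eqn:conformal-factor}) it also does not affect the associated conformal factor restricted to $[0,1] \times \overline V$.

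First I would construct the cutoff. The orbit $K = \bigcup_{t \in [0,1]} \phi^t_H(\overline V)$ is a compact subset of the open set $U$, so one can choose nested open neighborhoods $K \subset W' \subset \overline{W'} \subset W \subset \overline W \subset U$ together with a smooth function $\rho \colon M \to [0,1]$ satisfying $\rho \equiv 1$ on $W'$ and $\supp \rho \subset W$. Set $\tilde H_t = \rho \cdot H_t$. Since $\rho \equiv 1$ and $d\rho \equiv 0$ on the open neighborhood $W'$ of $K$, the defining equations~(\ref{eqn:contact-ham}) give $X_{\tilde H_t} = X_{H_t}$ on $W'$, so by uniqueness of solutions of ordinary differential equations $\phi^t_{\tilde H}(x) = \phi^t_H(x)$ for every $x \in \overline V$ and every $t \in [0,1]$. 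In particular $\phi^1_{\tilde H}$ still displaces $\overline V$, and the conformal factor $\tilde h$ of $\Phi_{\tilde H}$ coincides with $h$ on $[0,1] \times \overline V$ by~(\ref{eqn:conformal-factor}), whence $|\tilde h|_{\overline V} = |h|_{\overline V}$.

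Next, applying the global contact energy-capacity inequality \cite[Theorem~1.1]{ms:tcd1} to the smooth contact isotopy $\Phi_{\tilde H}$ and the displaced set $\overline V$ yields a constant $C_V > 0$, depending only on $V$ (namely on its displacement energy), such that $C_V \, e^{-|\tilde h|_{\overline V}} \leq \|\tilde H\|$, and hence $C_V \, e^{-|h|_{\overline V}} \leq \|\tilde H\|$ by the preceding step. The remaining task is to show $\|\tilde H\| \leq C' \|H_{|\overline U}\|$ for some constant $C' > 0$ depending only on $\rho$, equivalently on the pair $(V,U)$: since $\tilde H_t = \rho H_t$ is supported in $\overline W \subset \overline U$ and $0 \leq \rho \leq 1$, this follows from a direct estimate based on the definition~(\ref{eqn:contact-norm}) together with \cite[Lemma~2.4]{ms:tcd1}, as both the oscillation term and the mean-value term in $\|\tilde H_t\|$ are controlled by the corresponding quantities for the restriction of $H_t$ to $\overline U$. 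Combining these two estimates with $C = C_V/C'$ yields the theorem.

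The principal obstacle I anticipate is a careful matching of norms: one must verify that the cutoff estimate $\|\tilde H\| \leq C' \|H_{|\overline U}\|$ holds with a constant independent of $H$ and of $\Phi_H$, and, equally importantly, that the global energy-capacity inequality from \cite[Theorem~1.1]{ms:tcd1} delivers an exponential factor that involves only the conformal factor on the displaced set. If the global inequality is stated instead with $|\tilde h|$ over all of $M$, then the cutoff $\rho$ must be refined, shrinking its support toward $K$ so that the orbits of $X_{\tilde H}$ under consideration stay in $W$ and the excess conformal factor contribution outside $\overline V$ can be absorbed into the constant; this is the step where the precise formulation of \cite[Theorem~1.1]{ms:tcd1} must be used most attentively.
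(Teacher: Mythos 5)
Your cutoff in the $M$-direction is fine: with $K=\bigcup_{t\in[0,1]}\phi^t_H(\overline V)\subset U$ compact, the flow of $\tilde H=\rho H$ agrees with that of $H$ on $\overline V$, so $\phi^1_{\tilde H}$ still displaces $\overline V$ and $\tilde h=h$ on $[0,1]\times\overline V$; an entirely analogous localization (the function $\eta H$) occurs in the paper's proof, and your estimate $\|\tilde H\|\le C'\|H_{|\overline U}\|$ is no more problematic than the paper's own $\|\eta H\|\le\|H_{|\overline U}\|$. The genuine gap is exactly the point you flag at the end and then try to argue away. The paper does not obtain Theorem~\ref{thm:local-contact-energy-capacity-inequality} by quoting \cite[Theorem~1.1]{ms:tcd1} as a black box; it reruns the symplectization argument: it lifts $\eta H$ to the compactly supported Hamiltonian $\rho(\theta)e^{\theta}\eta(x)H_t(x)$ on $(M\times\R,\,-d(e^\theta\pi_1^*\alpha))$, chooses the cutoff in the extra variable $\theta$ to equal $1$ on $[a-c,b+c]$ with $c=|h|_{\overline V}$ --- legitimate because the lifted flow of a point $(x,\theta)$ with $x\in\overline V$ only translates $\theta$ by $-h_t(x)$ --- and then applies the Lalonde--McDuff energy--capacity inequality \cite{lalonde:gse95}; the factor $e^{b+c+1}$ is what produces the exponent $e^{-|h|_{\overline V}}$, i.e.\ the dependence on the conformal factor over the displaced set only.

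Your reduction therefore works only if the global inequality of \cite[Theorem~1.1]{ms:tcd1} is already stated with the exponential factor involving the conformal factor on the displaced set; your proposal does not establish this, and your fallback for the other case does not repair it. If the global statement involves $|\tilde h|$ over all of $[0,1]\times M$, note that by the analogue of~(\ref{eqn:conformal-factor}) the conformal factor of $\tilde H=\rho H$ at a point of the transition region is $\int_0^t\bigl(\rho\,(R_\alpha . H_s)+(R_\alpha . \rho)\,H_s\bigr)\circ\phi^s_{\tilde H}\,ds$: it involves the first derivatives of $H$ and of $\rho$, is not controlled by $|h|_{\overline V}$ or by $\|H_{|\overline U}\|$, and shrinking $\supp\rho$ toward $K$ only makes $R_\alpha . \rho$ larger. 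Since the theorem demands a constant $C$ independent of $H$, $\Phi_H$ and $h$, this ``excess conformal factor'' cannot be absorbed into the constant. So, as written, the crucial localization of the conformal factor to $\overline V$ is missing from your argument; it is precisely the content the paper extracts by working directly in the symplectization, where the unbounded direction being cut off is the auxiliary $\theta$-coordinate (shifted exactly by $-h_t$ on $\overline V$) rather than a region of $M$ where the conformal factor of the cutoff isotopy is uncontrolled.
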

\begin{proof}
Let $a < b$ be distinct real numbers, and consider the product $\overline V \times [a,b]$ as a subset of the symplectization $M \times \R$ of $M$ with its symplectic structure
	\[ \omega = -d (e^\theta \pi_1^*\alpha ), \]
where $\pi_1:M\times \R \to M$ denotes the projection to the first factor, and $\theta$ represents the coordinate on $\R$.
Recall that the smooth admissible Hamiltonian function $\widehat H:[0,1]\times M \times \R \to \R$ defined by $\widehat H (t, x, \theta) = e^\theta H(t,x)$ generates the admissible smooth Hamiltonian isotopy $\Phi_{\widehat H}$ of the symplectization given by
	\[ \phi^t_{\widehat H}(x, \theta) = \left( \phi^t_H(x), \theta - h_t(x) \right) \]
for all $0 \leq t \leq 1$ and all $(x, \theta) \in M \times \R$.
Hence the time-one map $\phi^1_{\widehat H}$ satisfies 
	\[\phi^1_{\widehat H}(\overline V \times [a,b]) \cap (\overline V \times [a,b]) = \emptyset . \] 

Let $W \subset U$ be an open set such that $\phi^t_H(\overline V) \subset W \subset \overline W \subset U$ for all $0 \leq t \leq 1$, and let $\eta$ be a smooth cut-off function that satisfies $\eta = 1$ on $W$ and $\eta = 0 $ outside $U$.
Set $c = | h |_{\overline V }$, and let $\rho: \R \to [0,1]$ be a smooth cut-off function such that
	\[ \rho(\theta) = \left\{ \begin{array}{l l}
					1 & {\rm if} \ \theta \in [a-c,b+c]\\
					0 & {\rm if} \ \theta \in \R \setminus [a-c-1, b+c+1].
					\end{array}
				\right.
	\]
By construction the time-$t$ maps $\phi^t_{\eta H}$ and $\phi^t_H$ agree on $\overline V \subset M$, and thus if $g$ denotes the conformal factor of the isotopy $\Phi_{\eta H}$, then $g_t(x) = h_t(x)$ for all $0 \leq t \leq 1$ and $x \in \overline V$.

Consider the compactly supported Hamiltonian isotopy $\Phi_{\rho \, \widehat{\eta H}}$ of $M\times \R$ generated by the Hamiltonian $\rho\, \widehat{\eta H}$ defined at each $(t, x, \theta) \in [0,1]\times M \times \R$ by
	\[ \rho\, \widehat{\eta H}(t, x, \theta) = \rho(\theta) e^\theta \eta(x) H(t, x).\]
If $x \in \overline V$ and $\theta \in [a, b]$, then for all $0 \leq t \leq 1$, 
\begin{align*}
\phi^t_{\rho\, \widehat{\eta H}}(x, \theta) = (\phi^t_{\eta H}(x), \theta - h_t(x)) = \phi^t_{\widehat H}(x, \theta).
\end{align*}
In particular
	\[ \phi^1_{\rho\, \widehat{\eta H}}\left( \overline V \times [a,b] \right) \cap \left( \overline V \times [a,b] \right) = \emptyset,\]
and the energy-capacity inequality \cite{lalonde:gse95} implies
\begin{align*}
0 < \frac{1}{2}c(\overline V \times [a,b]) \leq \| \rho\, \widehat{\eta H} \|_{\rm Hofer} \leq e^{b+c+1}\| \eta H \| \leq e^{b+c+1} \| H_{| \overline U} \|,
\end{align*}
where $c(\overline V \times [a,b])$ denotes the Gromov width of $\overline V \times [a,b]$, and
	\[ \| \rho\, \widehat{\eta H} \|_{\rm Hofer} = \int_0^1 \left( \max_{(x, \theta)}(\rho\, \widehat{\eta H_t}) - \min_{(x, \theta)}(\rho\, \widehat{\eta H_t}) \right) dt \]
is the Hofer length of the Hamiltonian isotopy $\Phi_{\rho\, \widehat{\eta H}}$ of $M\times \R$.
Thus
	\[ 0 < \frac{c(\overline V \times [a,b])}{2 e^{  b + 1 } } e^{-| h |_{\overline V}} \leq \| H_{| \overline U} \| \]
and the lemma follows.
\end{proof}

\begin{proof}[Proof of Theorem~\ref{thm:local-uniqueness-isotopy}]
If the conclusion of the theorem is false, then there exists a point $x \in U$, a time $0 < t_0 \le 1$ such that $\phi^{t_0}_H(x) \neq x$, and $\phi^t_H(x) \in U$ for all $0 \leq t \leq t_0$.
By continuity of the map $\phi^{t_0}_H$, there exists an open neighborhood $V$ containing $x$ such that $\phi^{t_0}_H(\overline V) \cap \overline V = \emptyset$, and by shrinking $V$ if necessary, we may further assume that $\phi^t_{H}(\overline V) \subset U$ for all $0 \leq t \leq t_0$.

Let $(\Phi_{H_i}, H_i, h_i)$ be a sequence of smooth contact dynamical systems that converges to $(\Phi_H, H, h)$ in the contact metric $\dalpha$ as $i \to \infty$.
By our assumption and the $C^0$-convergence of the sequence $\Phi_{H_i}$ to $\Phi_H$, for all $i$ sufficiently large, the isotopy $\Phi_{H_i}$ satisfies  $\phi^{t_0}_{H_i}(\overline V) \cap \overline V = \emptyset$, and $\phi^t_{H_i}(\overline V) \subset U$ for all $0 \leq t \leq t_0$.

We reparameterize, and apply Theorem~\ref{thm:local-contact-energy-capacity-inequality} as follows.
For each $i = 1, 2, 3, \ldots$, let $(\Phi_{F_i}, F_i, f_i)$ be the smooth contact dynamical system of $(M,\alpha)$ generated by the smooth contact Hamiltonian $F_i:[0,1]\times M \to \R$ defined by
\begin{align}\label{eqn:reparametrized-hamiltonian}
F_i(s, x) = t_0 H_i(st_0, x).
\end{align}
Then for every $0\leq s \le 1$ and $x \in M$ we have $\phi^s_{F_i}(x) = \phi^{st_0}_{H_i}(x)$ and
\begin{align}\label{eqn:reparametrized-conformal-factor} 
 f_i(s, x) = h_i(st_0, x).
\end{align}
Hence $ \phi^1_{F_i}(\overline V) \cap \overline V = \emptyset \ {\rm and } \ \phi^s_{F_i}(\overline V) \subset U $ for all sufficiently large $i$, and all $0 \leq s \leq 1$.
Theorem~\ref{thm:local-contact-energy-capacity-inequality} implies
\begin{align}\label{eqn:inequality-local-contact-energy}
0 < C e^{-|f_i|_{\overline V}} \leq \| {F_i}_{| U}\|.
\end{align}
Equation~(\ref{eqn:reparametrized-hamiltonian}) implies $\| {F_i}_{| U} \| \leq \| {H_i}_{| U} \|$, and~(\ref{eqn:reparametrized-conformal-factor}) implies that $C e^{-|{h_i}|_{\overline V}} \leq C e^{-|{f_i}|_{\overline V}}$, which combined with inequality~(\ref{eqn:inequality-local-contact-energy}) gives
	\[ 0 < C e^{-|{h_i}|_{\overline V}} \leq \| {H_i}_{| U} \|. \]
The contradiction that results by letting $i \to \infty$ completes the proof.
\end{proof}

\begin{cor}[Local uniqueness of topological contact isotopy and conformal factor]\label{cor:local-uniqueness-isotopy-conformal-factor}
Suppose that $(\Phi, H, h)$ and $(\Psi, F, f)$ are topological contact dynamical systems of $\ma$ and that $U \subset M$ is open.
If for almost every $t \in [0,1]$ the contact Hamiltonian functions satisfy ${H_t}_{| \phi_t( U )} = {F_t}_{| \phi_t(U)}$, then for every $t \in [0,1]$ both ${\phi_t}_{| U} = {\psi_t}_{| U}$ and ${h_t}_{| U} = {f_t}_{| U}$.
\end{cor}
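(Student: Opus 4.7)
The plan is to localize the argument used for Corollary~\ref{cor:unique-hamiltonian-conformal-factor} at the end of Section~\ref{sec:topological-contact-dynamics}: I would study the composite system $\Phi^{-1} \circ \Psi$, which by Theorem~\ref{thm:group-structure} is itself a topological contact dynamical system with an explicitly computable Hamiltonian and conformal factor. The key is to unwind the group formulas so that the hypothesis on $\phi_t(U)$ translates into vanishing of the composed Hamiltonian on $U$ itself, bringing us within reach of the local isotopy uniqueness theorem already established in this section.

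First, I would compute the Hamiltonian of $\Phi^{-1} \circ \Psi$. Writing the inverse triple as $(\Phi^{-1}, \overline{H}, \overline{h})$ with $\overline{H}_t = -e^{-h_t}(H_t \circ \phi_t)$ and $\overline{h}_t = -h_t \circ \phi_t^{-1}$, the composition rule of Theorem~\ref{thm:group-structure}, applied with the isotopy of $\Phi^{-1}$ being $\{\phi_t^{-1}\}$ (so that its inverse is $\phi_t$) and using the identity $\overline{h}_t \circ \phi_t = -h_t$, collapses to
\[
(\overline{H} \# F)_t \;=\; e^{-h_t}\bigl((F_t - H_t) \circ \phi_t\bigr).
\]
For $x \in U$ the point $\phi_t(x)$ lies in $\phi_t(U)$, where $F_t = H_t$ by hypothesis, hence $(\overline{H} \# F)_t\big|_U = 0$ for almost every $t$. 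Theorem~\ref{thm:local-uniqueness-isotopy} applied to the topological contact dynamical system $(\Phi^{-1} \circ \Psi, \overline{H} \# F, \overline{h} \# f)$ then forces $(\phi_t^{-1} \circ \psi_t)(x) = x$ for every $t \in [0,1]$ and every $x \in U$, i.e.\ $\phi_t|_U = \psi_t|_U$.

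With the isotopies matched pointwise on $U$, equality of the conformal factors will follow from Proposition~\ref{pro:local-unique-topo-conformal-factor} applied at each fixed $t$. The defining smooth sequences $\phi_i^t$ and $\psi_i^t$ converge uniformly on $M$ to $\phi_t$ and $\psi_t$; since uniform convergence of homeomorphisms of a closed manifold passes to inverses, the compositions $(\phi_i^t)^{-1} \circ \psi_i^t$ and $(\psi_i^t)^{-1} \circ \phi_i^t$ converge uniformly on compact subsets of $U$ to the identity, while the smooth conformal factors $h_i^t$ and $f_i^t$ converge uniformly on $M$ to $h_t$ and $f_t$. Proposition~\ref{pro:local-unique-topo-conformal-factor} then yields $h_t|_U = f_t|_U$ for every $t$. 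The only substantive step is the Hamiltonian identity above, where the exponential conformal weight $e^{-h_t}$ and the pullback by $\phi_t$ must align exactly so that the hypothesis on $\phi_t(U)$ converts into a statement on $U$; everything else is bookkeeping that invokes the group structure of $\TCDS\ma$ and the local uniqueness results already in hand.
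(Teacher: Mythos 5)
Your proposal is correct and follows essentially the same route as the paper: form the system $(\Phi^{-1}\circ\Psi,\overline H\#F,\overline h\#f)$ via Theorem~\ref{thm:group-structure}, observe $(\overline H\#F)_t=e^{-h_t}((F_t-H_t)\circ\phi_t)$ vanishes on $U$ for almost every $t$, invoke Theorem~\ref{thm:local-uniqueness-isotopy} to get $\phi_t|_U=\psi_t|_U$, and then Proposition~\ref{pro:local-unique-topo-conformal-factor} for $h_t|_U=f_t|_U$. The paper's proof is exactly this, only stated more tersely (it cites the proposition without spelling out the approximating sequences as you do).
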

\begin{proof}
By Theorem~\ref{thm:group-structure},
	\[ (\Phi_{G}, G, g) = (\Phi^{-1} \circ \Psi, \overline H\# F,  \overline h \# f)\]
is a topological contact dynamical system, and by assumption for almost every $t \in [0,1]$,
	\[ \left. \left( \overline H \# F \right)_t \right|_U = \left. e^{-h_t}((F_t - H_t)\circ \phi_t)\right|_U = 0.\]
Theorem~\ref{thm:local-uniqueness-isotopy} implies that ${\phi^t_G}_{| U} = \id$ and thus $\left. \psi_t \right|_U = \left. \phi_t \right|_U$.
Finally the fact that $\left. h_t \right|_U = \left. f_t \right|_U$ follows from Proposition~\ref{pro:local-unique-topo-conformal-factor}.
\end{proof}

We next prove the local uniqueness of the topological contact Hamiltonian of a topological contact isotopy, or in other words the generalization of Theorem~\ref{thm:main-theorem} to open subsets of a contact manifold $(M, \xi)$ with contact form $\alpha$ such that $\ker \alpha = \xi$.

\begin{thm}\label{thm:local-uniqueness-hamiltonian}
Suppose that $(\Phi_H, H, h)$ is a topological contact dynamical system of $ \ma $, and there exists an open set $U \subset M$ such that $ \left. \phi^t_H \right|_U = \left. \id \right|_U $ for all $0 \leq t \leq 1$.
Then the restriction of $ H_t  $ to $U$ vanishes for almost every $t \in [0,1]$.
\end{thm}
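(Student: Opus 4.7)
The plan is to mirror the proof of Theorem~\ref{thm:main-theorem} step by step, working with the local set
\[
\L(U) = \{ H \in L^1([0,1], C^0(M)) \mid (\Phi_H, H, h) \in \TCDS\ma \text{ and } \phi^t_H|_U = \id_U \text{ for all } t \in [0,1] \}
\]
in place of $\nullham\ma$, and with its time-independent subset $\L_{\rm aut}(U)$ in place of $\nullautham\ma$.

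I first transfer the algebraic content of Lemma~\ref{lem:null-hamiltonians} to $\L(U)$. It is closed under the group operations of Theorem~\ref{thm:group-structure} (composition and inversion of identity-on-$U$ isotopies remain identity on $U$), under time reparametrization, and under the conjugation transformation of Theorem~\ref{thm:transformation-law} by automorphisms $\varphi \in \Aut(M,\xi)$ whose support lies inside $U$ (such $\varphi$ preserve $U$ setwise). A diagonal-sequence argument analogous to the one underlying the closure of $\nullham\ma$ gives closure of $\L(U)$ in the contact norm. The local smooth step is immediate from Corollary~\ref{cor:rigidity}(2): smoothness of $H \in \L(U)$ realizes $\Phi_H$ as the smooth contact flow of $H$, and the identity flow on $U$ forces its generating time-dependent vector field $X_{H_t}$ to vanish on $U$, so $H_t|_U = \alpha(X_{H_t})|_U = 0$. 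The non-autonomous step from Lemma~\ref{lem:non-autonomous-step} then adapts verbatim: for $H \in \L(U)$ and almost every $t_0 \in [0,1]$, the rescaled Hamiltonians $G_j(s,x) = H(t_0 + s/j, x)$ belong to $\L(U)$ (the $[t_0, t_0 + 1/j]$-piece of $\Phi_H$ is still identity on $U$), and the Lebesgue differentiation estimate $\|G_j - H_{t_0}\| \to 0$ combined with closure of $\L(U)$ places $H_{t_0}$ in $\L_{\rm aut}(U)$ for a.e.\ $t_0$.

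The crux is the local autonomous step: if $H \in \L_{\rm aut}(U)$ then $H|_U = 0$. Supposing otherwise, pick $x_0 \in U$ with $H(x_0) \neq 0$ and a Darboux ball $V$ around $x_0$ with $\bar V \subset U$. The key new ingredient is to choose $\phi \in \Diff_0(M,\xi)$ with compact support in $V$ that \emph{fixes $x_0$} and whose conformal factor $g$ satisfies $g(x_0) \neq 0$; a concrete choice is the time-$1$ map of the contact flow generated by a smooth cut-off of the basic Hamiltonian $x \mapsto x_{2n+1}$ in Heisenberg coordinates on $V$ centered at $x_0$, for which $\phi(x_0) = x_0$ while $g(x_0) = 1$. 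Then the composition $F = H \# \overline{H^\phi} \in \L(U)$ restricts on $U$ to the pointwise difference $H - e^{-g}(H \circ \phi)$, which is compactly supported in $V$ and satisfies $F(x_0) = H(x_0)(1 - e^{-g(x_0)}) \neq 0$ regardless of whether $H$ is constant near $x_0$. The Heisenberg convolution of $F|_V$ and the Riemann sum approximation of Lemma~\ref{lem:autonomous-step}, carried out inside $V$ with cut-off Hamiltonians supported in $V \subset U$, then produces a smooth non-vanishing element of $\L(U)$ supported in $V$; the local smooth step forces it to vanish on $U$ and hence everywhere, the desired contradiction.

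The main obstacle is this autonomous step. The global proof of Lemma~\ref{lem:autonomous-step} reduces to the case of a constant null Hamiltonian, then finishes by invoking both connectedness of $M$ and smoothness of constants together with Lemma~\ref{lem:smooth-step}; neither connectedness nor a smooth global extension of a constant function on $U$ is available in the local setting. I bypass the reduction altogether by exploiting a contact diffeomorphism with non-trivial conformal factor at its own fixed point, guaranteeing $F(x_0) \neq 0$ without passing through local constancy. Additional bookkeeping is required because the group operation $\#$ on $\L(U)$ reduces to pointwise addition only when restricted to $U$, not globally; fortunately this is exactly what is needed, since the convolution and Riemann sum approximation can be performed intrinsically on $V \subset U$ and closure of $\L(U)$ in the contact norm then supplies the limiting smooth element in $\L(U)$.
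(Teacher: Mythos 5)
Your local smooth and non-autonomous steps are fine, but the argument collapses at the two places where you need $\L(U)$ to behave like $\nullham \ma$: closedness in the contact norm, and linearity. The set $\nullham \ma$ is closed in $\| \cdot \|$ for a trivial reason: all of its elements share the \emph{same} isotopy $\Id$ and conformal factor $0$, so convergence of the Hamiltonians is literally $\dalpha$-convergence of the triples and completeness of $\TCDS \ma$ applies. Elements of $\L(U)$ have unconstrained isotopies outside $U$, so a $\| \cdot \|$-Cauchy sequence in $\L(U)$ gives no control on the corresponding isotopies, and the limit function need not be a topological contact Hamiltonian at all; no diagonal argument repairs this, and the closure is load-bearing both in your non-autonomous step and in passing from the Riemann sums to $F_\epsilon$. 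The second failure is related: for null Hamiltonians the operations of Theorem~\ref{thm:group-structure} literally are pointwise addition and negation, which is what makes $j F_j$, $\sum c_j (R_{\tau_j})^* F$, and the convolution genuine elements of $\nullham \ma$. For $\L(U)$ they reduce to pointwise operations only after restriction to $U$. In particular $F = H \# \overline{H^\phi}$ coincides with $H - e^{-g}(H \circ \phi)$ only on $U$ and is \emph{not} globally compactly supported in $V$ (the commutator isotopy it generates is not the identity off $U$), so the Heisenberg convolution $F * K_\epsilon$ is not defined as you use it, and the Riemann sums are merely restrictions to $U$ of elements of $\L(U)$ that differ from them globally by amounts the (global) contact norm sees. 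Your closing remark that the approximation "can be performed intrinsically on $V$" names the problem without solving it: one would need a purely local completion theory on $U$, which circles back to the closure gap.

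For contrast, the paper's proof sidesteps all of this. Since $\phi_H^t$ is the identity on $U$ and preserves $M \setminus U$, it commutes with every contactomorphism $\varphi$ compactly supported in $U$, so Corollary~\ref{cor:unique-hamiltonian-conformal-factor} gives $H_t = e^{-g}(H_t \circ \varphi)$ for a.e.\ $t$; choosing such $\varphi$ carrying the points of a countable dense subset of $U$ to a fixed $x_0$ with conformal factor vanishing at the moved point (Lemma~\ref{lem:vanishing-conformal-diff}) forces $H_t$ to be constant on $U$ for a.e.\ $t$, and the constant is then killed by comparison with the reparametrized Reeb flow via Corollary~\ref{cor:local-uniqueness-isotopy-conformal-factor}, the Reeb flow having no fixed points. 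If you wish to rescue your strategy, observe that any element of $\L(U)$ that is compactly supported in $U$ already lies in $\nullham \ma$ (apply Theorem~\ref{thm:local-uniqueness-isotopy} on the complement of the support) and hence vanishes by Theorem~\ref{thm:main-theorem}; the genuine difficulty, which your proposal does not overcome, is producing from a nonzero $H \in \L(U)$ a nonzero element of $\L(U)$ that is \emph{globally} compactly supported in $U$.
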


\begin{proof}
Let $ X = \{ x_0, x_1, \ldots \} $ be a countable and dense subset of $U$.
For each $i = 1, 2, \ldots$ there exists a contact diffeomorphism $\varphi_i \in \Diff (M, \xi)$ with support in $U$ such that $\varphi_i(x_i) = x_0$, and by Lemma~\ref{lem:vanishing-conformal-diff} we may in addition assume that the conformal factor $h_i$ corresponding to $\varphi_i$ vanishes at $x_i$.

For every $i = 1, 2, \ldots$, because $\varphi_i^{-1} \circ \phi^t_{H} \circ \varphi_i = \phi^t_H$, Corollary~\ref{cor:unique-hamiltonian-conformal-factor} implies $H = e^{-h_i} (H \circ \varphi_i) \in L^1( [0,1], C^0(M) )$, and thus there exists a set $S_i \subset [0,1]$ of measure zero such that if $t \notin S_i$, then for every $ x \in M $,
	\[ H_t(x) = e^{-h^t_i(x)}H_t(\varphi_i(x)) .\]
In particular if $t \notin S_i$, then
	\[ H(t, x_i) = e^{h^t_i(x_i)} H(t, \varphi_i(x_i)) = H(t, \varphi_i(x_i)) = H(t, x_0). \]
The union $S = S_1 \cup S_2 \cup \cdots \subset [0,1]$ also has measure zero, and if $t \notin S$, then $H_t \in C^0(M)$ is constant on the dense subset $X \subset U$.
Therefore $H_t$ is constant on $U$, or in other words the restriction of $ H $ to the open set $U$ is equal to an $L^1$-function $F:[0,1] \to \R$.
The triple $(\phi^\chi_R, F, 0)$ is a topological contact dynamical system, where $\phi^\chi_R$ denotes the reparameterization of the Reeb flow at time $\chi (t) = \int_0^t F(s)\, ds$.
By Corollary~\ref{cor:local-uniqueness-isotopy-conformal-factor} the isotopy $\phi_R^{\chi(t)}$ is the identity on $U$.
Thus the reparameterization function $\chi$ is zero, and $F(t) = 0$ for almost every $ t \in [0,1] $.
\end{proof}

\begin{cor}[Local uniqueness of topological Hamiltonian and conformal factor]\label{cor:local-uniqueness-conformal-factor-hamiltonian}
Let $U \subset M$ be an open subset of a smooth contact manifold $(M, \xi)$ with a contact form $\alpha$ such that $\ker \alpha = \xi$.
If $(\Phi, H, h)$ and $(\Psi, F, f) \in \TCDS \ma$ satisfy $\Phi_{| U} = \Psi_{| U}$, then $H_t = F_t$ on the open set $\phi_t(U) = \psi_t(U)$, and ${h_t}_{| U} = {f_t}_{| U}$.
\end{cor}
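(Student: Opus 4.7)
The plan is to apply the local uniqueness of the topological contact Hamiltonian (Theorem~\ref{thm:local-uniqueness-hamiltonian}) and of the topological conformal factor (Proposition~\ref{pro:local-unique-topo-conformal-factor}) to the composite system $\Phi^{-1}\circ\Psi$. By Theorem~\ref{thm:group-structure} this composite is a topological contact dynamical system with Hamiltonian $\overline H\# F$ and conformal factor $\overline h\# f$, and the hypothesis $\phi_t|_U = \psi_t|_U$ immediately gives $(\phi_t^{-1}\circ\psi_t)|_U = \id|_U$ for every $t\in[0,1]$, so the isotopy $\Phi^{-1}\circ\Psi$ restricts to the constant identity isotopy on $U$.

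First I would unwind the Hamiltonian composition law from Theorem~\ref{thm:group-structure}; exactly as in the proof of Corollary~\ref{cor:unique-hamiltonian-conformal-factor}, the simplification uses the identity $\overline h_t\circ\phi_t = -h_t$ and the fact that the isotopy of $\Phi^{-1}$ is $\{\phi_t^{-1}\}$, and produces
\[ (\overline H\# F)_t = e^{-h_t}\bigl((F_t - H_t)\circ\phi_t\bigr). \]
Theorem~\ref{thm:local-uniqueness-hamiltonian} applied to $(\Phi^{-1}\circ\Psi,\overline H\# F,\overline h\# f)$ on the open set $U$ then forces this expression to vanish on $U$ for almost every $t\in[0,1]$. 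Since $e^{-h_t}$ is strictly positive, it follows that $F_t = H_t$ on the open set $\phi_t(U) = \psi_t(U)$ for almost every $t$, which is the first assertion.

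For the equality of the conformal factors I would argue directly via Proposition~\ref{pro:local-unique-topo-conformal-factor}. Fix $t\in[0,1]$ and choose defining sequences of smooth contact dynamical systems converging in $d_\alpha$ to $(\Phi,H,h)$ and $(\Psi,F,f)$, so that the smooth time-$t$ maps $\phi_i^t$ and $\psi_i^t$ and their smooth conformal factors $h_i^t$ and $f_i^t$ converge uniformly on $M$ to $\phi_t,\psi_t,h_t,f_t$, respectively. Uniform convergence of the smooth isotopies together with that of their inverses implies that both $(\phi_i^t)^{-1}\circ\psi_i^t$ and $(\psi_i^t)^{-1}\circ\phi_i^t$ converge uniformly on $M$ to $\phi_t^{-1}\circ\psi_t$ and $\psi_t^{-1}\circ\phi_t$, each of which restricts to $\id|_U$ by hypothesis. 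Proposition~\ref{pro:local-unique-topo-conformal-factor} then delivers $h_t|_U = f_t|_U$ for every $t\in[0,1]$. The main point of care is the correct identification $(\phi^t_{\overline H})^{-1} = \phi_t$ and the accompanying algebraic cancellation leading to the factored expression for $(\overline H\# F)_t$ displayed above; once that simplification is verified, both halves of the corollary reduce cleanly to the already-proved local versions of the uniqueness theorems, and I do not anticipate any further obstacle.
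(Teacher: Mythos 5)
Your proposal is correct and follows essentially the same route as the paper: the paper's (very terse) proof also forms the composite system $(\Phi^{-1}\circ\Psi,\overline H\# F,\overline h\# f)$ exactly as in Corollary~\ref{cor:local-uniqueness-isotopy-conformal-factor}, uses the identity $(\overline H\# F)_t = e^{-h_t}((F_t-H_t)\circ\phi_t)$ together with Theorem~\ref{thm:local-uniqueness-hamiltonian} to get $H_t=F_t$ on $\phi_t(U)$ for almost every $t$, and obtains ${h_t}_{|U}={f_t}_{|U}$ from Proposition~\ref{pro:local-unique-topo-conformal-factor}. Your write-up just makes these steps explicit, so there is nothing further to add.
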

\begin{proof}
Consider the topological contact dynamical system $(\Phi, G, g)$ as in the proof of Corollary~\ref{cor:local-uniqueness-conformal-factor-hamiltonian}.
\end{proof}

\section{Consequences of the uniqueness of the contact Hamiltonian} \label{sec:corollaries}
In this section we prove a number of consequences of Corollary~\ref{cor:unique-hamiltonian-conformal-factor}.
The proofs are elementary for smooth contact dynamical systems, and extend to topological contact dynamical systems by virtue of the uniqueness of the topological contact Hamiltonian.
By Corollary~\ref{cor:local-uniqueness-conformal-factor-hamiltonian} local versions of the results in this section hold as well.
The converse statements follow from Corollary~\ref{cor:unique-isotopy-and-conformal-factor} and Corollary~\ref{cor:local-uniqueness-isotopy-conformal-factor}.
These results are a good indication of the importance of the one-to-one correspondence established by Corollary~\ref{cor:unique-hamiltonian-conformal-factor} and Corollary~\ref{cor:unique-isotopy-and-conformal-factor}.

Before stating the results, we extend the definition of a contact dynamical system in a straightforward manner to systems that are defined for all times $t \in \R$.
Let $H \colon \R \times M \to \R$ be a smooth function, and $X_H = \{ X_H^t \}$ be the corresponding time-dependent contact vector field.
Since $M$ is closed, the vector field $X_H$ generates a unique isotopy $\Phi_H = \{ \phi_H^t \}$ that is defined for all $t \in \R$, and we call $(\Phi_H, H, h)$ a smooth contact dynamical system defined on $\R$.
Consider the restriction of such a smooth contact dynamical system to a closed interval $[a, b] \subset \R$.
After composition with the contact diffeomorphism $(\phi_H^a)^{-1}$ and a linear reparameterization, we may assume $a = 0$ and $b = 1$, and $\phi_H^0 = \id$, thus reducing the case of a general interval $[a, b]$ to the contact dynamical systems studied in the remainder of this paper.
We call a triple $(\Phi_H, H, h)$ a topological contact dynamical system defined on $\R$ if there exists a sequence of smooth contact dynamical systems $(\Phi_{H_i}, H_i, h_i)$ defined on $\R$, so that the restrictions to each closed interval $[a, b]$ converge with respect to the metric $d_\alpha$ to the restriction of $(\Phi_H, H, h)$ to the same interval $[a, b]$.
Clearly this is equivalent to the convergence $\Phi_{H_i} \to \Phi$, $H_i \to H$, and $h_i \to h$ on compact subsets.
In light of the uniqueness theorems proved in this article and in \cite{ms:tcd1}, this definition is also equivalent to imposing that the restriction of $(\Phi_H, H, h)$ to any closed subset $[a, b]$ is a topological contact dynamical system.

\begin{lem} 
Let $(\Phi_H, H, h)$ be a topological contact dynamical system on $\R$, and suppose that the isotopy $\Phi_H = \{ \phi_H^t \}$ is a one-parameter subgroup of $\Aut (M, \xi)$.
Then the topological contact Hamiltonian $H$ is time-independent, and moreover $H \circ \phi_H^t = e^{h_t} \cdot H$ for all times $t$.
In particular, the energy of the system is preserved, i.e.\ $H \circ \phi_H^t = H$ for all $t$, if and only if the topological conformal factor $h$ vanishes.
\end{lem}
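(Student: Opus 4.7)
The plan is to exploit the one-parameter subgroup hypothesis via the uniqueness of the topological contact Hamiltonian and topological conformal factor (Corollary~\ref{cor:unique-hamiltonian-conformal-factor}). Fix $s \in \R$ and consider the time-shifted isotopy $\Psi_t := \phi_H^{s+t} \circ (\phi_H^s)^{-1}$ for $t \in [0,1]$; the one-parameter subgroup condition forces $\Psi_t = \phi_H^t$. On the other hand, choosing smooth approximations $(\Phi_{H_i}, H_i, h_i) \to (\Phi_H, H, h)$ on any compact interval, the shifted smooth isotopies $t \mapsto \phi_{H_i}^{s+t} \circ (\phi_{H_i}^s)^{-1}$ are generated by the smooth Hamiltonians $H_i(s+t, \cdot)$ with smooth conformal factors $(h_{i,s+t} - h_{i,s}) \circ (\phi_{H_i}^s)^{-1}$; passing to the limit exhibits $\Psi$ as a topological contact dynamical system with Hamiltonian $t \mapsto H(s+t, \cdot)$ and conformal factor $t \mapsto (h_{s+t} - h_s) \circ (\phi_H^s)^{-1}$. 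This passage to the limit is the main technical point, and it reduces to the routine smooth identity $(\Psi_t)^*\alpha = e^{(h_{s+t} - h_s) \circ (\phi_H^s)^{-1}} \alpha$ combined with the fact that the contact norm~\eqref{eqn:contact-norm} of $H(s+\cdot, \cdot)$ over $[0,1]$ equals that of $H$ over $[s, s+1]$, and the uniform continuity of composition on $C^0$.

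With this setup, Corollary~\ref{cor:unique-hamiltonian-conformal-factor} yields both $H(s+t, \cdot) = H(t, \cdot)$ in $L^1([0,1], C^0(M))$ and the cocycle $h_{s+t} = h_s + h_t \circ \phi_H^s$ for all $t$. Since the first identity holds for every $s \in \R$, a Fubini argument applied to the null set $\{(s, t) \in \R \times [0,1] : H(s+t, \cdot) \neq H(t, \cdot)\}$ forces $H$ to be time-independent as an element of $L^1([0,1], C^0(M))$, which proves the first assertion. For the identity $H \circ \phi_H^t = e^{h_t} H$, I would apply the transformation law (Theorem~\ref{thm:transformation-law}) to $\varphi = \phi_H^s \in \Aut(M,\xi)$, whose topological conformal factor is $h_s$: conjugation leaves $\Phi_H$ invariant by commutativity of the subgroup, while the transformed Hamiltonian is $e^{-h_s}(H \circ \phi_H^s)$. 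Uniqueness combined with the time-independence of $H$ then yields $H \circ \phi_H^s = e^{h_s} H$ for every $s$.

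For the ``if and only if'' clause, one direction is immediate from the preceding identity. For the converse, assume $H \circ \phi_H^t = H$ for all $t$. Then $(e^{h_t} - 1) H = 0$, so by continuity $h_t$ vanishes on $\overline{\{x \in M : H(x) \neq 0\}}$. On the complementary open set $U := M \setminus \overline{\{x : H(x) \neq 0\}}$, the Hamiltonian vanishes identically, so Theorem~\ref{thm:local-uniqueness-isotopy} yields $\phi_H^t|_U = \id$ for every $t$, and Proposition~\ref{pro:local-unique-topo-conformal-factor} applied to the smooth approximations (with the constant sequence $\psi_i = \id$) then forces $h_t|_U = 0$. Combining the two cases gives $h \equiv 0$, completing the argument.
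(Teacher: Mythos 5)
Your proposal is correct and follows essentially the same route as the paper's proof: it identifies the time-shifted isotopy $\phi_H^{t+s}\circ(\phi_H^s)^{-1}$ with $\Phi_H$ and applies Corollary~\ref{cor:unique-hamiltonian-conformal-factor} to get time-independence, then conjugates by $\phi_H^s$ and uses the transformation law (Theorem~\ref{thm:transformation-law}) together with uniqueness to get $H\circ\phi_H^s=e^{h_s}H$. The extra details you supply (the limit argument for the shifted system, the Fubini step, and the local-uniqueness argument for the converse of the ``in particular'' clause) are points the paper treats as straightforward, and they are handled correctly.
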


\begin{proof}
Let $s \in \R$.
By hypothesis,
\begin{eqnarray} \label{eqn:commuting}
	\phi_H^t \circ \phi_H^s = \phi_H^{t + s} = \phi_H^s \circ \phi_H^t
\end{eqnarray}
for all $t$, and in particular, $\phi_H^t = \phi_H^{t + s} \circ (\phi_H^s)^{-1}$.
It is straightforward to check that the right-hand side is a topological contact isotopy with topological contact Hamiltonian at time $t$ equal to $H_{t + s}$.
By uniqueness of the topological contact Hamiltonian, Corollary~\ref{cor:unique-hamiltonian-conformal-factor}, $H_{t + s} = H_t$ for all $t$, and therefore $H$ is time-independent.

Similarly, $\phi_H^t = (\phi_H^s)^{-1} \circ \phi_H^t \circ \phi_H^s$, and by the transformation law, Theorem~\ref{thm:transformation-law}, the right-hand side is a topological contact isotopy with topological contact Hamiltonian $e^{- h_s} (H \circ \phi_H^s)$.
Again by Corollary~\ref{cor:unique-hamiltonian-conformal-factor}, $e^{- h_s} \cdot (H \circ \phi_H^s) = H$, proving the second claim.
\end{proof}

Conversely, if the function $H$ is time-independent, and $H \circ \phi_H^t = e^{h_t} \cdot H$ for all times $t$, then the isotopy $\Phi_H = \{ \phi_H^t \}$ is a one-parameter subgroup of $\Aut (M, \xi)$.
See \cite[Lemma~7.7]{ms:tcd1} for the proof.
We point out however that the smooth contact Hamiltonian $H (x, y, z) = z - \sum_{i = 1}^n x_i \cdot y_i$ on $\R^{2 n + 1}$ for instance generates a one-parameter subgroup of $\Diff (M,\xi) \subset \Aut (M,\xi)$ that is not strictly contact.
Cutting off the function $H$ inside a Darboux chart provides a similar example on every contact manifold with arbitrary contact form.
In general, we obtain the following formula for the topological conformal factor of the topological contact isotopy.

\begin{lem}
If a topological contact isotopy $\Phi_H = \{ \phi_H^t \}$ is a one-parameter subgroup of $\Aut (M, \xi)$, then its topological conformal factor satisfies the relation $h_{t + s} = h_t + h_s \circ \phi_H^t = h_s + h_t \circ \phi_H^s$ for all times $s$ and $t$.
\end{lem}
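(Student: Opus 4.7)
The plan is to deduce both equalities from Proposition~\ref{pro:local-unique-topo-conformal-factor} by approximating the single topological automorphism $\phi_H^{t+s}$, which by the one-parameter subgroup hypothesis also equals $\phi_H^t \circ \phi_H^s$ and $\phi_H^s \circ \phi_H^t$, in two different ways and comparing the uniform limits of the associated smooth conformal factors. Fix $s, t \in \R$ and choose a sequence $(\Phi_{H_i}, H_i, h_i)$ of smooth contact dynamical systems, defined on a compact interval containing $0$, $s$, $t$, and $s+t$, that converges to $(\Phi_H, H, h)$ in the contact metric $d_\alpha$.

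First, the sequence $\phi_i := \phi_{H_i}^{t+s}$ of smooth contact diffeomorphisms uniformly converges to $\phi_H^{t+s}$, with smooth conformal factors $h_i^{t+s}$ converging uniformly on $M$ to the continuous function $h_{t+s}$. Second, set $\psi_i := \phi_{H_i}^t \circ \phi_{H_i}^s$. These are smooth contact diffeomorphisms, and the smooth chain rule $\psi_i^*\alpha = (\phi_{H_i}^s)^*(\phi_{H_i}^t)^*\alpha$ expresses their conformal factors as
\[ g_i = h_i^s + h_i^t \circ \phi_{H_i}^s. \]
Since $h_i^s \to h_s$ and $h_i^t \to h_t$ uniformly on the compact manifold $M$, since $h_t$ is (uniformly) continuous, and since $\phi_{H_i}^s \to \phi_H^s$ uniformly, the functions $g_i$ converge uniformly to the continuous function $g := h_s + h_t \circ \phi_H^s$. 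Moreover $\psi_i$ converges uniformly to $\phi_H^t \circ \phi_H^s$, which by hypothesis equals $\phi_H^{t+s}$.

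Hence both $\phi_i$ and $\psi_i$ converge uniformly on the compact manifold $M$ to the same homeomorphism $\phi_H^{t+s}$, and the standard fact that inverses of uniformly convergent sequences of homeomorphisms of a compact space converge uniformly yields that $\phi_i^{-1}\circ\psi_i$ and $\psi_i^{-1}\circ\phi_i$ converge uniformly to the identity on $M$. Proposition~\ref{pro:local-unique-topo-conformal-factor} applied with $U = M$ therefore forces $h_{t+s} = g = h_s + h_t \circ \phi_H^s$. Repeating the argument with $\psi_i$ replaced by $\phi_{H_i}^s \circ \phi_{H_i}^t$ establishes the companion identity $h_{t+s} = h_t + h_s \circ \phi_H^t$. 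The main subtlety is that $\Phi_H$ is itself generally non-smooth, so the chain-rule computation of the conformal factor of a composition cannot be applied directly to $\phi_H^t \circ \phi_H^s$; carrying the computation out at the smooth approximations $\phi_{H_i}^u$ and then invoking the local uniqueness of the topological conformal factor is precisely what allows the argument to proceed.
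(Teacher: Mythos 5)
Your proof is correct and follows essentially the same route as the paper: the paper's one-line proof invokes the commuting relation $\phi_H^t \circ \phi_H^s = \phi_H^{t+s} = \phi_H^s \circ \phi_H^t$ together with the uniqueness of the topological conformal factor (Corollary~\ref{cor:unique-topo-conformal-factor-iso}, which is exactly Proposition~\ref{pro:local-unique-topo-conformal-factor} with $U = M$), and your argument simply unwinds that uniqueness statement through smooth approximating sequences and the chain rule for conformal factors. The extra care you take with uniform convergence of $h_i^t \circ \phi_{H_i}^s$ and of the inverses is a valid (if more detailed) justification of the same mechanism.
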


\begin{proof}
The claim follows from equation~(\ref{eqn:commuting}) and the uniqueness of the conformal factor Corollary~\ref{cor:unique-topo-conformal-factor-iso}.
\end{proof}

\begin{thm}\label{thm:topological-transformation-law} 
Suppose $\{ \phi_H^t \}$ and $\{ \phi_F^t \}$ are smooth or topological contact isotopies, and $\phi$ is a topological automorphism of the contact structure $\xi$ with topological conformal factor $g$.
If $\{ \phi_H^t \} = \{ \phi^{-1} \circ \phi_F^t \circ \phi\}$, then $H = e^{-g} (F \circ \phi)$.
\end{thm}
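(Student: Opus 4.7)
The plan is to combine the transformation law for topological contact dynamical systems (Theorem~\ref{thm:transformation-law}) with the uniqueness of the topological contact Hamiltonian (Corollary~\ref{cor:unique-hamiltonian-conformal-factor}). Since $\phi \in \Aut(M,\xi)$ has topological conformal factor $g$, Theorem~\ref{thm:transformation-law} applied to the topological contact dynamical system $(\Phi_F, F, f)$ produces a new topological contact dynamical system
\[
  \bigl(\phi^{-1} \circ \Phi_F \circ \phi,\; F^\phi,\; f^\phi\bigr) \in \TCDS(M,\alpha),
\]
where $(F^\phi)_t = e^{-g}(F_t \circ \phi)$.

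By hypothesis, the isotopy $\phi^{-1} \circ \Phi_F \circ \phi$ coincides with $\Phi_H$. Thus both $(\Phi_H, H, h)$ and $(\Phi_H, F^\phi, f^\phi)$ are topological contact dynamical systems of $(M,\alpha)$ sharing the same topological contact isotopy. Corollary~\ref{cor:unique-hamiltonian-conformal-factor} (uniqueness of the topological Hamiltonian associated to a topological contact isotopy) then forces
\[
  H_t = (F^\phi)_t = e^{-g}(F_t \circ \phi)
\]
for almost every $t \in [0,1]$, which is the desired identity in $L^1([0,1], C^0(M))$.

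There is no real obstacle: the entire argument reduces to invoking two previously established results in the correct order. The only point worth emphasizing is that, in the smooth case, one would verify the formula $H = e^{-g}(F \circ \phi)$ by direct computation using the contact vector field equations~(\ref{eqn:contact-ham}); in the topological setting this differential-geometric route is unavailable, so the conclusion genuinely depends on the uniqueness theorem proved in this paper. In particular, Corollary~\ref{cor:unique-hamiltonian-conformal-factor} is essential, as it is exactly the replacement for the elementary smooth argument. No further work (such as a separate analysis of the conformal factor, or a passage to approximating sequences) is needed, since the transformation law already packages the approximation step.
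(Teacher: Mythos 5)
Your proof is correct and is essentially identical to the paper's own argument: the paper likewise deduces the statement by combining the transformation law (Theorem~\ref{thm:transformation-law}) with uniqueness of the topological contact Hamiltonian (Corollary~\ref{cor:unique-hamiltonian-conformal-factor}). Nothing further is needed.
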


Recall that in our notation this means $H_t = e^{-g} (F_t \circ \phi)$ for all $t$.

\begin{proof}
This follows from the transformation law Theorem~\ref{thm:transformation-law} and from uniqueness of the topological contact Hamiltonian, Corollary~\ref{cor:unique-hamiltonian-conformal-factor}.
\end{proof}

See \cite[Theorem~4.3]{ms:tcd2} for the converse statement.

\begin{lem} 
If a topological contact isotopy $\{ \phi_H^t \}$ commutes with the Reeb flow $\{ \phi_R^s \}$ of the contact form $\alpha$ for all times $s$ and $t$, then its corresponding topological contact Hamiltonian $H$ is basic, i.e.\ $H_t \circ \phi_R^s = H_t$ for all $s$ and $t$.
\end{lem}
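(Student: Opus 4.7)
The plan is to obtain the basic condition $H_t \circ \phi_R^s = H_t$ as a direct application of the topological transformation law, Theorem~\ref{thm:topological-transformation-law}, together with uniqueness of the topological contact Hamiltonian, Corollary~\ref{cor:unique-hamiltonian-conformal-factor}. The two ingredients I will need are: (i) each time-$s$ map $\phi_R^s$ of the Reeb flow is a smooth strictly contact diffeomorphism, hence a topological automorphism of $\xi$ in $\Aut (M, \xi)$ whose (topological) conformal factor $g$ vanishes identically; and (ii) the commutation hypothesis $\phi_H^t \circ \phi_R^s = \phi_R^s \circ \phi_H^t$ can, for each fixed $s$, be rewritten as the equality of topological contact isotopies
\[ \{\phi_H^t\}_{t} = \{(\phi_R^s)^{-1} \circ \phi_H^t \circ \phi_R^s\}_{t}. \]

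Granting these two observations, the proof runs as follows. Fix $s \in \R$ and apply Theorem~\ref{thm:topological-transformation-law} to the isotopies $\{\phi_F^t\} = \{\phi_H^t\}$ and $\phi = \phi_R^s$, which has $g = 0$. The conjugated isotopy $\{(\phi_R^s)^{-1} \circ \phi_H^t \circ \phi_R^s\}$ is generated by the topological contact Hamiltonian $e^{-g}(H \circ \phi_R^s) = H \circ \phi_R^s$. Since this isotopy coincides with $\{\phi_H^t\}$ by hypothesis, uniqueness of the topological contact Hamiltonian (Corollary~\ref{cor:unique-hamiltonian-conformal-factor}) forces $H_t \circ \phi_R^s = H_t$ for almost every $t \in [0,1]$, and hence (by the $L^1([0,1], C^0(M))$ equivalence class convention) we may take $H_t \circ \phi_R^s = H_t$ for all $t$. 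Since $s$ was arbitrary, this is the desired basic condition.

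There is essentially no obstacle beyond correctly invoking the transformation law and noting that the Reeb flow is strictly contact, so that its conformal factor is identically zero; the work has already been packaged into Theorem~\ref{thm:topological-transformation-law}, which in turn rests on Corollary~\ref{cor:unique-hamiltonian-conformal-factor}. In the smooth case the same identity can be obtained by differentiating $\phi_H^t \circ \phi_R^s = \phi_R^s \circ \phi_H^t$ in $s$, which produces $R_\alpha . H_t = 0$; the topological version replaces this infinitesimal argument, which is unavailable here, by the integrated transformation law.
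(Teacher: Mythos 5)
Your proposal is correct and is essentially the paper's own argument: the paper proves this lemma by taking $F = H$ and $\phi = \phi_R^s$ with conformal factor $g = 0$ in Theorem~\ref{thm:topological-transformation-law}, exactly as you do. Your additional remarks (that the Reeb flow is strictly contact, and the smooth infinitesimal analogue $R_\alpha . H_t = 0$) are accurate but not needed beyond the one-line application of the transformation law.
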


\begin{proof}
Take $H = F$ and $\phi = \phi_R^s$ with $g = 0$ in the previous theorem.
\end{proof}

See \cite[Lemma~7.10]{ms:tcd1} for the converse.
Similar arguments establish the following results.

\begin{lem}
Suppose $\phi \in \Aut(M,\xi)$ is a topological automorphism with conformal factor $g$ with respect to $\alpha$, and $\alpha' = e^f \alpha$ is another contact form defining $\xi$.
If $\{ \phi_{R'}^t \} = \{ \phi^{-1} \circ \phi_R^t \circ \phi\}$, then $g = f$.
\end{lem}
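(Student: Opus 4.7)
The plan is to reinterpret both sides of the assumed identity $\{\phi_{R'}^t\} = \{\phi^{-1}\circ \phi_R^t\circ \phi\}$ as topological contact dynamical systems of $\ma$, compute their topological contact Hamiltonians with respect to $\alpha$, and then read off the conclusion from the uniqueness theorem, Corollary~\ref{cor:unique-hamiltonian-conformal-factor}.

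First I would rewrite the Reeb flow of $\alpha'$ with respect to $\alpha$. The isotopy $\{\phi_{R'}^t\}$ is a smooth contact isotopy of $(M,\xi)$, so by Corollary~\ref{cor:rigidity}(1) it defines a smooth, and in particular topological, contact dynamical system with respect to $\alpha$. The smooth contact Hamiltonian with respect to $\alpha$ is computed directly from the defining identities $\iota(R_{\alpha'})\alpha' = 1$ and $\alpha = e^{-f}\alpha'$, giving
\[ \alpha(R_{\alpha'}) = e^{-f}\alpha'(R_{\alpha'}) = e^{-f}, \]
a time-independent function on $M$.

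Next I would compute the topological contact Hamiltonian of the conjugated isotopy. The Reeb flow $\{\phi_R^t\}$ has contact Hamiltonian $H\equiv 1$ and conformal factor identically $0$ with respect to $\alpha$. Since $\phi \in \Aut(M,\xi)$ has topological conformal factor $g$ with respect to $\alpha$, the transformation law Theorem~\ref{thm:transformation-law} produces the topological contact dynamical system $(\phi^{-1}\circ \Phi_R\circ \phi, H^\phi, h^\phi) \in \TCDS\ma$ with topological contact Hamiltonian
\[ H^\phi = e^{-g}(1\circ \phi) = e^{-g}. \]

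Putting the two computations together, the hypothesis gives two topological contact dynamical systems of $\ma$ having the same underlying topological contact isotopy $\{\phi_{R'}^t\} = \{\phi^{-1}\circ \phi_R^t\circ \phi\}$ but with topological contact Hamiltonians $e^{-f}$ and $e^{-g}$ respectively. Corollary~\ref{cor:unique-hamiltonian-conformal-factor} then forces $e^{-f} = e^{-g}$, and hence $f = g$ on $M$. The only delicate point is the first step, namely ensuring that both Hamiltonians to be compared are computed with respect to the same contact form $\alpha$ (not $\alpha'$), so that uniqueness applies inside a single group $\TCDS\ma$; this is handled by translating the Reeb data of $\alpha'$ through the conformal factor $e^{-f}$ as above.
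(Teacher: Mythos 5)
Your proof is correct and is essentially the paper's argument: the paper simply takes $H = e^{-f}$ and $F = 1$ in Theorem~\ref{thm:topological-transformation-law}, which is itself nothing but the combination of the transformation law (Theorem~\ref{thm:transformation-law}) with uniqueness of the topological contact Hamiltonian (Corollary~\ref{cor:unique-hamiltonian-conformal-factor}) that you spell out, together with the same observation that $\alpha(R_{\alpha'}) = e^{-f}$. (The appeal to Corollary~\ref{cor:rigidity}(1) is unnecessary, since any smooth contact isotopy is already a smooth contact dynamical system, but this does not affect the argument.)
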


\begin{proof}  
Take $H = e^{-f}$ and $F = 1$ in Theorem~\ref{thm:topological-transformation-law}.
\end{proof}

For the converse refer to \cite[Proposition~12.2]{ms:tcd1}.

\begin{lem} 
If a topological automorphism $\phi \in \Aut(M,\xi)$ commutes with the Reeb flow $\{ \phi_R^t \}$ for all $t$, then its topological conformal factor $h$ vanishes identically.
\end{lem}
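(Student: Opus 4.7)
The plan is to derive this lemma as a direct specialization of the preceding one by taking $\alpha' = \alpha$, that is, choosing $f \equiv 0$. In this case the Reeb vector field $R'$ of $\alpha' = e^f \alpha = \alpha$ coincides with $R$, so that $\{\phi_{R'}^t\} = \{\phi_R^t\}$. The hypothesis that $\phi$ commutes with the Reeb flow $\{\phi_R^t\}$ then amounts to
\[
\{\phi^{-1}\circ\phi_R^t\circ\phi\} = \{\phi_R^t\} = \{\phi_{R'}^t\},
\]
which is precisely the hypothesis of the previous lemma. Its conclusion gives $g = f = 0$, and since $g$ denotes the topological conformal factor of $\phi$ with respect to $\alpha$ (what is called $h$ in the present lemma), this yields $h \equiv 0$.

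An alternative and equally short route would be to invoke Theorem~\ref{thm:topological-transformation-law} with $F = H \equiv 1$, the topological contact Hamiltonian of the Reeb flow. Since $\phi$ commutes with $\{\phi_R^t\}$, one has $\phi_R^t = \phi^{-1}\circ\phi_R^t\circ\phi$, and applying the transformation law together with uniqueness of the topological contact Hamiltonian (Corollary~\ref{cor:unique-hamiltonian-conformal-factor}) forces $1 = e^{-h}\cdot(1\circ\phi) = e^{-h}$, whence $h \equiv 0$.

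There is no serious obstacle here; the only point to check is that the object called $g$ in the previous lemma and the object called $h$ in the current lemma coincide, namely the topological conformal factor of the single automorphism $\phi$, which is immediate from the definition of a topological automorphism of the contact structure recalled earlier in the paper.
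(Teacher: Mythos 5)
Your proposal is correct and matches the paper's argument: the paper proves this lemma exactly as in your alternative route, applying the transformation law with the Reeb Hamiltonian $F = H \equiv 1$ and uniqueness of the topological contact Hamiltonian to get $e^{-h} = 1$. Your primary route via the preceding lemma with $f \equiv 0$ is just a repackaging of that same computation (since that lemma was itself proved by taking $H = e^{-f}$, $F = 1$ in Theorem~\ref{thm:topological-transformation-law}), so both versions are fine and essentially identical to the paper's proof.
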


\begin{proof}
We have $\{ \phi^{-1} \circ \phi_R^t \circ \phi \} = \{\phi_R^t \}$, and thus $e^{-h} \cdot 1 = 1$ by uniqueness of the topological contact Hamiltonian.
\end{proof}

The converse to this lemma can be found in \cite[Lemma~12.3]{ms:tcd1}.

\begin{lem} 
Suppose $\phi$ and $\psi$ are two topological automorphisms of the contact structure $\xi = \ker \alpha$ with topological conformal factors $h$ and $g$, respectively, and
	\[ \phi^{-1} \circ \phi_R^t \circ \phi = \psi^{-1} \circ \phi_R^t \circ \psi, \]
for all $t$, where $\{ \phi_R^t \}$ again denotes the Reeb flow of $\alpha$.
Then $h = g$.
\end{lem}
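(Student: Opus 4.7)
The plan is to reduce this statement to the preceding lemma via the substitution $\chi := \phi \circ \psi^{-1}$. First I would rearrange the hypothesis: multiplying $\phi^{-1} \circ \phi_R^t \circ \phi = \psi^{-1} \circ \phi_R^t \circ \psi$ on the left by $\phi$ and on the right by $\phi^{-1}$ yields
\[ \phi_R^t = (\phi \circ \psi^{-1}) \circ \phi_R^t \circ (\phi \circ \psi^{-1})^{-1}, \]
so that $\chi$ commutes with the Reeb flow for every $t$. Since $\Aut(M,\xi)$ is a group, $\chi$ is itself a topological automorphism of $\xi$, and the preceding lemma immediately asserts that its topological conformal factor vanishes identically.

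The remaining task is to identify this conformal factor in terms of $h$ and $g$. I would choose smooth approximating sequences $\phi_j, \psi_j \in \Diff(M,\xi)$ that converge uniformly to $\phi$ and $\psi$ and whose smooth conformal factors $h_j, g_j$ converge uniformly to $h$ and $g$, respectively. For smooth contact diffeomorphisms the identity $(\psi_j^{-1})^* \alpha = e^{-g_j \circ \psi_j^{-1}} \alpha$ combined with the chain rule gives
\[ (\phi_j \circ \psi_j^{-1})^* \alpha = e^{(h_j - g_j) \circ \psi_j^{-1}} \alpha. \]
Passing to the limit, $\phi_j \circ \psi_j^{-1} \to \chi$ uniformly and $(h_j - g_j) \circ \psi_j^{-1}$ converges uniformly to $(h - g) \circ \psi^{-1}$, which by definition of a topological automorphism identifies the topological conformal factor of $\chi$ as $(h - g) \circ \psi^{-1}$. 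Combined with the previous paragraph, $(h - g) \circ \psi^{-1} \equiv 0$, and precomposing with $\psi$ yields $h = g$.

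The only point that requires care is the uniform convergence $(h_j - g_j) \circ \psi_j^{-1} \to (h - g) \circ \psi^{-1}$. This follows from a standard $\epsilon/3$ argument combining the uniform convergences $h_j \to h$, $g_j \to g$, and $\psi_j^{-1} \to \psi^{-1}$ with uniform continuity of the continuous limits $h$ and $g$ on the compact manifold $M$. Every remaining step is a formal consequence of the preceding lemma and the group structure of $\Aut(M,\xi)$ already established in the paper.
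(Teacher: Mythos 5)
Your argument is correct, but it follows a different route from the paper's. The paper proves this lemma in one line: the Reeb flow is the contact dynamical system with Hamiltonian $1$ and vanishing conformal factor, so the transformation law (Theorem~\ref{thm:transformation-law}) exhibits the common conjugated isotopy $\{\phi^{-1}\circ\phi_R^t\circ\phi\}=\{\psi^{-1}\circ\phi_R^t\circ\psi\}$ as a topological contact isotopy generated both by $e^{-h}\cdot 1$ and by $e^{-g}\cdot 1$, and uniqueness of the topological contact Hamiltonian (Corollary~\ref{cor:unique-hamiltonian-conformal-factor}) gives $e^{-h}=e^{-g}$. You instead pass to $\chi=\phi\circ\psi^{-1}$, invoke the preceding lemma (an automorphism commuting with the Reeb flow has identically vanishing conformal factor), and identify the conformal factor of $\chi$ as $(h-g)\circ\psi^{-1}$ by computing on smooth approximating sequences. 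This is sound: the pullback identity $(\phi_j\circ\psi_j^{-1})^*\alpha=e^{(h_j-g_j)\circ\psi_j^{-1}}\alpha$ is correct, the uniform convergences (including $\psi_j^{-1}\to\psi^{-1}$ on the compact manifold $M$) go through by the $\epsilon/3$ argument you sketch, and precomposing with $\psi$ finishes the proof. One point to make explicit: identifying the uniform limit $(h-g)\circ\psi^{-1}$ as \emph{the} topological conformal factor of $\chi$ uses the uniqueness of the conformal factor of a topological automorphism, which is a theorem recalled from \cite{ms:tcd1} (and also follows from Proposition~\ref{pro:local-unique-topo-conformal-factor} with $U=M$), not merely the definition. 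What your approach buys is a proof phrased entirely in terms of the group structure of $\Aut(M,\xi)$ and the cocycle behaviour of conformal factors, with no mention of contact Hamiltonians; what it costs is the extra approximation work, and note that the preceding lemma you rely on is itself proved by exactly the transformation-law-plus-uniqueness argument, so the same ingredients appear one level down.
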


\begin{proof}
The transformation law Theorem~\ref{thm:transformation-law} together with uniqueness of the topological contact Hamiltonian yields $e^{-h} \cdot 1 = e^{-g} \cdot 1$.
\end{proof}

See \cite[Lemma~12.4]{ms:tcd1} for the converse statement.

\section{Rigidity of the geodesic flow} \label{sec:geodesics}
In this section we prove a rigidity result for the geodesic flow of a Riemannian manifold.
The proof uses the identification of the geodesic flow on the unit tangent bundle with the Reeb flow on the unit cotangent bundle, and the uniqueness of the topological contact isotopy corresponding to a topological Hamiltonian function established by Corollary~\ref{cor:unique-isotopy-and-conformal-factor}.

Let $B$ be a closed smooth manifold with a Riemannian metric $g$.
Recall that $g$ is complete, and there exists a unique smooth vector field $G$ on the tangent bundle $T B$ whose trajectories are of the form $t \mapsto (\gamma (t), \dot \gamma (t)) \in T_{\gamma (t)} B \subset T B$, where $\gamma$ is a geodesic (not necessarily of unit speed) of the Riemannian metric $g$.
The flow of the \emph{geodesic field} $G$ is called the \emph{geodesic flow} of $g$.
The length (with respect to the Riemannian metric $g$) of the tangent vector $\dot \gamma$ is constant along a geodesic $\gamma$, and thus the flow of the geodesic field $G$ preserves the \emph{unit tangent bundle} $S T B$ defined fiber-wise by $S T_b B = \{ v \in T_b B \mid g_b (v,v) = 1 \}$.
In other words, the vector field $G$ is tangent to $S T B$, and the geodesic flow restricts to a geodesic flow on the unit tangent bundle.
The Riemannian metric $g$ gives rise to a bundle isomorphism $\Psi \colon T B \to T^* B$ that is fiber-wise defined by
	\[ \Psi_b \colon T_b B \to T_b^* B, \ \ v \mapsto \iota (v) g_b = g_b (v, \cdot), \]
and the \emph{unit cotangent bundle} $S T^* B$ is by definition the isomorphic image of the unit tangent bundle $S T B$.
The induced bundle metric on $T^* B$ is denoted by $g^*$.

The Liouville one-form $\lambda$ on the cotangent bundle $T^* B$ induces a contact form $\alpha = \lambda_{| ST^*B}$ on the unit cotangent bundle $S T^* B$, where $\lambda_u = u \circ d\pi$ for $\pi \colon T^* B \to B$ the canonical projection.
Its Reeb vector field $R$ is related to the geodesic field $G$ on $S T B$ by the identity $\Psi_* G = R$, and in particular, $\Psi \circ \phi_G^t \circ \Psi^{-1} = \phi_R^t$, or
	\[ (\Psi^{-1} \circ \phi_R^t \circ \Psi) (b,v) = (\gamma (t), \dot \gamma (t)), \]
where $\gamma$ is the unique geodesic (of unit speed) originating in the point $\gamma (0) = b \in B$ with $\dot \gamma (0) = v \in S T_b B$.
See sections 1.4 and 1.5 in \cite{geiges:ict08} for further details.

A Riemannian metric induces a distance function on the manifold $B$.
We assume for the remainder of this section that all distances and the resulting notions of convergence are with respect to a fixed reference metric $g_\re$ on $T B$ and the induced bundle metric $g_\re^*$ on $T^* B$ and distance function on $B$.
We say that a sequence of Riemannian metrics $g_k$ converges to $g$ weakly uniformly if for all pairs of vector fields $X$ and $Y$ on $B$, the functions $g_k (X,Y) \to g (X,Y)$ uniformly on $B$.
If $(g_{i j})$ denotes the coefficient matrix of $g$ in a system of local coordinates, this notion of convergence is equivalent to the uniform convergence of the corresponding coefficient functions $g_{k, i j}$ of the metrics $g_k$ to the functions $g_{i j}$ in every system of local coordinates.

\begin{thm}[Rigidity of geodesic flow] \label{thm:geo}
Let $g$ be a Riemannian metric on a closed manifold $B$, and $g_k$ be a sequence of Riemannian metrics that converges to $g$ weakly uniformly.
Suppose that the geodesic flows $\{ \phi_{G_k}^t \}$ of the metrics $g_k$ are uniformly Cauchy on compact subsets of $\R \times T B$, where $\R$ denotes the time variable.
Then the geodesic flows $\{ \phi_{G_k}^t \}$ converge uniformly on compact subsets of $\R \times T B$ to the geodesic flow $\{ \phi_G^t \}$ of the metric $g$.
\end{thm}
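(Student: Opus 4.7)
The plan is to reduce the theorem, via the identification of the geodesic flow with the Reeb flow on the unit cotangent bundle, to a convergence statement for contact isotopies on a single fixed contact manifold, and then to apply Corollary~\ref{cor:unique-isotopy-and-conformal-factor} to identify the limit.

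First, the Cauchy hypothesis produces a continuous limit flow $\Phi = \{ \phi^t \}$ of homeomorphisms of $T B$ to which $\{ \phi_{G_k}^t \}$ converges uniformly on compact subsets of $\R \times T B$. Since $\phi_{G_k}^t$ preserves $g_k$-length and $g_k \to g$ uniformly in local coefficients, each $g$-sphere bundle $\{ (b, v) \in T B \mid |v|_g = c \}$ is $\Phi$-invariant. It suffices to identify $\phi^t$ with $\phi_G^t$ on the $g$-unit tangent bundle $S T B_g$: extension to all of $T B$ then follows from the homogeneity $\phi_G^t (b, c v) = c \cdot \phi_G^{c t} (b, v)$ of the geodesic flow, passed to the limit on each $g$-sphere bundle.

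Next, I transport the Reeb flow of $\alpha_k = \lambda_{| \Sigma_k}$ on $\Sigma_k = S T^* B_{g_k}$ to a contact isotopy on the fixed contact manifold $(\Sigma, \alpha) = (S T^* B_g, \lambda_{| S T^* B_g})$ via fiber-wise radial projection. The weak uniform convergence $g_k \to g$ together with compactness of $B$ gives uniform convergence $g_k^* \to g^*$ on compact subsets of $T^* B$, so for all sufficiently large $k$ the map $\Theta_k \colon \Sigma \to \Sigma_k$ defined by $\Theta_k (b, u) = (b, u / |u|_{g_k^*})$ is a diffeomorphism converging uniformly on $\Sigma$ to the identity. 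A direct computation using $\lambda_{c u} = c \lambda_u$ yields $\Theta_k^* \alpha_k = e^{f_k} \alpha$ with $f_k (b, u) = - \log |u|_{g_k^*}$, and $f_k \to 0$ uniformly on $\Sigma$. Define the smooth contact isotopy $\hat \phi_k^t = \Theta_k^{-1} \circ \phi_{R_k}^t \circ \Theta_k$ of $(\Sigma, \alpha)$, where $\phi_{R_k}^t$ denotes the Reeb flow of $\alpha_k$. From $\Theta_k^* \alpha_k = e^{f_k} \alpha$ and the fact that $\phi_{R_k}^t$ is generated by the constant contact Hamiltonian $1$ with respect to $\alpha_k$, one reads off the smooth contact Hamiltonian $\hat H_k (b, u) = e^{- f_k (b, u)} = |u|_{g_k^*}$ on $\Sigma$ and the smooth conformal factor $\hat h_k^t = f_k - f_k \circ \hat \phi_k^t$.

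The identity $\phi_{R_k}^t = \Psi_k \circ \phi_{G_k}^t \circ \Psi_k^{-1}$ on $\Sigma_k$, together with the uniform convergences $\Psi_k \to \Psi$ and $\Psi_k^{-1} \to \Psi^{-1}$ on compact subsets (again from weak uniform convergence of $g_k$) and $\Theta_k \to \id$ on $\Sigma$, combines with the Cauchy hypothesis to yield uniform convergence of $\hat \phi_k^t$ on compact subsets of $\R \times \Sigma$ to the continuous flow $\hat \phi^t = \Psi \circ {\phi^t}_{| S T B_g} \circ \Psi^{-1}$. Moreover $\hat H_k \to 1$ and $\hat h_k \to 0$ uniformly on $\Sigma$, hence in the contact norm~(\ref{eqn:contact-norm}) and the uniform norm~(\ref{eqn:uniform-norm}), respectively. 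Thus $(\hat \phi^t, 1, 0)$ is a topological contact dynamical system of $(\Sigma, \alpha)$. Since the smooth contact dynamical system $(\phi_R^t, 1, 0)$ generated by the constant Hamiltonian $1$ shares the same topological contact Hamiltonian, Corollary~\ref{cor:unique-isotopy-and-conformal-factor} forces $\hat \phi^t = \phi_R^t$; conjugation by $\Psi^{-1}$ then identifies $\phi^t$ with $\phi_G^t$ on $S T B_g$, completing the argument via the first reduction. The main obstacle is the careful verification of the formulas for $\hat H_k$ and $\hat h_k$ and of all the required uniform convergences; these rest on uniform control of the coefficient matrices of $g_k$ and their inverses in local coordinates on $B$, guaranteed by compactness of $B$ and the weak uniform convergence hypothesis.
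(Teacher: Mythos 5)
Your proposal is correct and follows essentially the same route as the paper's own proof: the fiber-wise radial rescaling $\Theta_k$ is exactly the paper's contactomorphism $\Phi_k \colon S T^* B \to S_k T^* B$, the transported Hamiltonian $\sqrt{g_k^*(b)(u,u)} \to 1$ and conformal factor $\to 0$ are the same, and the limit is identified via Corollary~\ref{cor:unique-isotopy-and-conformal-factor} in both cases. Your explicit homogeneity argument extending the conclusion from the unit tangent bundle to all of $T B$ is a small elaboration of the paper's ``without loss of generality,'' not a different approach.
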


If the sequence $g_k$ of Riemannian metrics $C^1$-converges to $g$, the conclusion of the theorem follows from a standard continuity theorem in the theory of ordinary differential equations. 
That the conclusion of the theorem still holds under the present weaker hypotheses is less obvious, and follows from Corollary~\ref{cor:unique-isotopy-and-conformal-factor}.
The conclusion of the theorem does not hold without the assumption that the geodesic flows $\Phi_{G_k}$ are uniformly Cauchy.
To see this, perturb a Riemannian metric that is flat somewhere in $B$ by a $C^0$-small but $C^1$-large bump. 

\begin{proof}
Without loss of generality we may restrict to the geodesic flows defined for time $0 \leq t \leq 1$ and originating on $S T B$, where $S T B$ denotes the unit tangent bundle of the Riemannian metric $g$.
Denote by $S_k T B$ and $S_k T^* B$ the unit tangent bundle and the unit cotangent bundle of the Riemannian metric $g_k$, respectively, and by $\Psi_k \colon T B \to T^* B$ the bundle isomorphism $v \mapsto \iota (v) g_k$ induced by the metric $g_k$, which restricts to an isomorphism $S_k T B \to S_k T^* B$.
Define a bundle diffeomorphism
	\[ \Phi_k \colon T^* B \setminus B \to T^* B \setminus B, \ \ (b,u) \mapsto \left( b, \frac{ \sqrt{ g^* (b) (u,u)} } { \sqrt{ g_k^* (b) (u,u)} }\cdot u \right), \]
where $g_k^*$ again denotes the bundle metric on $T^* B$ defined by the identification of $T^* B$ with $T B$ via the isomorphism $\Psi_k^{-1}$ and by the metric $g_k$.
Then $\Phi_k$ restricts to a contact diffeomorphism $S T^* B \to S_k T^* B$ with $\Phi_k^* \alpha_k = \left( 1/ \sqrt{ g_k^* } \right) \cdot \alpha$, where the contact form $\alpha_k$ is the restriction of the Liouville form $\lambda$ to $S_k T^* B$.
In local coordinates $q = (q_1, \ldots, q_n)$ on $B$ and $p = (p_1, \ldots, p_n)$ on the fibers of $T^* B$, the Liouville form on $T^* B$ is given by $\lambda = p \, dq = \sum_{i = 1}^n p_i \, dq_i$, and if the Riemannian metric $g$ is in the above local coordinates given by $g_b = \sum g_{i j} (b) \cdot dq_i \otimes dq_j$, then
	\[ g_b^* = g^* (b) = \sum_{i,j = 1}^n g^{i j} (b) \cdot \frac{\partial}{\partial q_i} \otimes \frac{\partial}{\partial q_j}, \]
where $(g^{i j} (b))$ denotes the inverse of the matrix $(g_{i j} (b))$.
Moreover,
	\[ \Phi_k (q,p) = \left( q, \frac{ \sqrt{\sum_{i, j = 1}^n p_i \cdot g^{i j} (q) \cdot p_j}}{\sqrt{\sum_{i, j = 1}^n p_i \cdot g_k^{i j} (q) \cdot p_j}}\cdot p_1, \ldots, \frac{\sqrt{\sum_{i, j = 1}^n p_i \cdot g^{i j} (q) \cdot p_j}}{\sqrt{\sum_{i, j = 1}^n p_i \cdot g_k^{i j} (q) \cdot p_j}}\cdot p_n  \right). \]
If $R_k$ denotes the Reeb vector field of the contact form $\alpha_k$, then $\{ \Phi_k^{-1} \circ \phi_{R_k}^t \circ \Phi_k \}$ is a smooth contact isotopy on $(S T^* B, \alpha)$, generated by the Hamiltonian function $H_k (b,u) = \sqrt{g_k^* (b) (u,u)}$, and $ (\Phi_k^{-1} \circ \phi_{R_k}^t \circ \Phi_k)^* \alpha = e^{h_k} \alpha$, where	
	\[ e^{h_k} = \frac{\sqrt{g_k^* \left( (\Phi_k^{-1} \circ \phi_{R_k}^t \circ \Phi_k) (b,u), (\Phi_k^{-1} \circ \phi_{R_k}^t \circ \Phi_k) (b,u) \right)}}{\sqrt{g_k^* (b) (u,u)}}. \]
By hypothesis, the metrics $g_k \to g$ weakly uniformly, and thus $g_k^* \to g^*$ weakly uniformly.
In particular, the Hamiltonian functions $H_k (b,u) = \sqrt{g_k^* (b) (u,u)} \to \sqrt{g^* (b) (u,u)} = 1$ uniformly on $S T^* B$, and the conformal factors $h_k$ converge to the zero function uniformly.
On the other hand,
	\[ \Phi_k^{-1} \circ \phi_{R_k}^t \circ \Phi_k = (\Phi_k^{-1} \circ \Psi_k) \circ \phi_{G_k}^t \circ (\Phi_k^{-1} \circ \Psi_k)^{-1}, \]
and $\Phi_k^{-1} \circ \Psi_k$ and $(\Phi_k^{-1} \circ \Psi_k)^{-1}$ converge to $\Psi$ and $\Psi^{-1}$, respectively, with respect to the bundle metrics $g_\re$ and $g_\re^*$.
Moreover, $(\phi_{G_k}^t)^{-1} (b,v) = \phi_{G_k}^t (b',- v)$, where $b' = \pi (\phi^t_{G_k}(b,v))$ so that the sequence $\{ \Phi_k^{-1} \circ \phi_{R_k}^t \circ \Phi_k \}$ is in fact $C^0$-Cauchy.
Corollary~\ref{cor:unique-isotopy-and-conformal-factor} implies the $C^0$-convergence of the contact isotopies $\{ \Phi_k^{-1} \circ \phi_{R_k}^t \circ \Phi_k \}$ on $S T^* B$ to the contact isotopy $\{ \phi_R^t \}$ generated by the Reeb vector field $R$ of $\alpha$.
Thus 
\begin{align*}
\Phi_{G_k} & = \Psi^{-1}_{k} \circ \Phi_{R_k} \circ \Psi_k = ( \Psi^{-1}_k \Phi_k) \circ (\Phi_k^{-1} \Phi_{R_k} \Phi_k) \circ (\Phi^{-1}_k \Psi_k) \to \Psi^{-1} \Phi_R \Psi = \Phi_G,
\end{align*}
i.e.\ the geodesic flows $\{ \phi_{G_k}^t \}$ converge in the $C^0$-sense to the geodesic flow $\{ \phi_G^t \}$.
\end{proof}

One can also prove Theorem~\ref{thm:geo} using the description of the geodesic flow as the restriction of a Hamiltonian flow on the cotangent bundle to a sub-level set of a quadratic Hamiltonian function on $T^* B$.
For the proof to go through however one must first generalize the uniqueness theorem for topological Hamiltonian isotopies to quadratic Hamiltonians on cotangent bundles.
Using the local uniqueness of the topological contact isotopy associated to a topological contact Hamiltonian, one can also prove a local rigidity result for Riemannian metrics that converge uniformly on some open subset of $B$, and rigidity of the geodesic flow for complete Riemannian metrics on an open manifold converging uniformly on compact subsets.
Another possible generalization of Theorem~\ref{thm:geo} is to sub-Riemannian structures.

\section*{Acknowledgement}
We would like to thank Kaoru Ono for answering a question of ours concerning the existence of dense Reeb orbits and for pointing out the references in the proof of Proposition~\ref{pro:eberlein}.  

\bibliography{tcd3}
\bibliographystyle{amsalpha}
\end{document}